 \patchcmd\Gread@eps{\@inputcheck#1 }{\@inputcheck"#1"\relax}{}{}
\newtheorem{theorem}{Theorem}[section]
\newtheorem{lemma}[theorem]{Lemma}
\newtheorem{proposition}[theorem]{Proposition}
\newtheorem{corollary}[theorem]{Corollary}
\newtheorem{remark}[theorem]{Remark}
\newtheorem{definition}[theorem]{Definition}
\newtheorem*{example*}{Example}
\def \CS {Cauchy--Schwarz }
\definecolor{light-gray}{gray}{0.95}
\def\centerarc[#1](#2)(#3:#4:#5){\draw[#1] ($(#2)+({#5*cos(#3)},{#5*sin(#3)})$) arc (#3:#4:#5);}
\numberwithin{equation}{section}
\numberwithin{figure}{section}
\newcommand{\<}{\big\langle}
\renewcommand{\>}{\big\rangle}
\newcommand{\R}{\mathbb R}
\newcommand{\N}{\mathbb N}
\renewcommand{\P}{\mathbb P}
\newcommand{\E}{\mathbb E}
\newcommand{\Q}{\mathbb Q}
\newcommand{\Acal}{\mathcal{A}} 
\newcommand{\Mcal}{\mathcal{M}}
\newcommand{\Ecal}{\mathcal{E}}
\title[Asymmetric simple exclusion with source]{Hydrodynamics for asymmetric simple exclusion on a finite segment with Glauber-type source}
\author{Lu \textsc{Xu}}
\address{Gran Sasso Science Institute, 67100 L'Aquila AQ, Italy.}
\email{lu.xu@gssi.it}
\author{Linjie \textsc{Zhao}}
\address{School of Mathematics and Statistics, Huazhong University of Science \& Technology, Wuhan 430074, China.}
\address{Hubei Key Laboratory of Engineering Modeling and Scientific Computing, Huazhong University of Science and Technology, Wuhan 430074, China}
\email{linjie\_zhao@hust.edu.cn}
\keywords{Asymmetric simple exclusion process; Hydrodynamic limit; Open dynamics; Hyperbolic balance law; Entropy solution}
\begin{document}

\begin{abstract}
We consider an open interacting particle system on a finite lattice.
The particles perform asymmetric simple exclusion and are randomly created or destroyed at all sites, with rates that grow rapidly near the boundaries.
We study the hydrodynamic limit for the particle density at the hyperbolic space-time scale and obtain the entropy solution to a boundary-driven quasilinear conservation law with a source term. Different from the usual boundary conditions introduced in \cite{BLN79,Otto96}, discontinuity (boundary layer) does not formulate at the boundaries due to the strong relaxation scheme.
\end{abstract}

\maketitle

\section{Introduction}
\label{sec:intr}

%
In the past decades, hydrodynamic limit for interacting particle system with boundary effect has attracted a lot of attention \cite{Baha12, baldasso2017exclusion, BCGS23, bernardin20192low, BGJ21, DPTV11, ESZ23, landim2007stationary, xu2022hydrodynamics}.
The limit captures the evolution of the conserved field of the microscopic dynamics as \emph{hydrodynamic equation}, at the macroscopic time scale that the dynamics is equilibrated locally.
Most of these works focus on symmetric dynamics and the Fick's law of diffusion or fractional diffusion with various types of boundary conditions, see, e.g., \cite{baldasso2017exclusion, BCGS23, bernardin20192low, BGJ21, DPTV11, landim2007stationary}.

%
For asymmetric mass-conserving systems, the dynamics reaches local equilibrium at the hyperbolic time scale, and the hydrodynamic equations are given by hyperbolic transport equations \cite{rezakhanlou91}.
When nonlinear interaction exists, these equations are featured by discontinuous phenomenon both inside the domain (shock wave) and at the boundary (boundary layer).
The non-regularity becomes the main obstacle in deducing the hydrodynamic limit.
Asymmetric simple exclusion process (ASEP) with open boundaries is the simplest model.
In its dynamics, each particle performs an asymmetric random walk on the finite lattice $\{1,\ldots,N-1\}$ under the exclusion rule: two particles cannot occupy the same site simultaneously.
Particles are created and annihilated randomly at sites $1$ and $N-1$, modeling the exchange of mass between the system and two external reservoirs at given densities.
In \cite{Baha12, xu2022hydrodynamics, xu2021hydrodynamic}, the hydrodynamic limit for the mass density of open ASEP is proved to be Burgers equation with boundary conditions introduced in \cite{BLN79, Otto96}.
Due to the discontinuous nature, these boundary conditions do not prescribe the density at boundary, even when the reservoir dynamics is overwhelmingly accelerated compared to the exclusion \cite{xu2021hydrodynamic}.
Instead, they impose a set of possible values for the boundary density.
The hydrostatic limit for the same dynamics is studied in \cite{Baha12, DEHP93}: the stationary density profile is the stationary solution to the hydrodynamic equation.
It is determined by the boundary data through a variational property \cite{PopkovS99}: the stationary flux is maximized if the density gradient is opposite to the drift, and is minimized otherwise.
The result is generalized in \cite{de2022quasi} to the quasi-static transform: if the reservoir densities are changing slowly at a time scale that is larger than the hyperbolic one, the profile evolves with the corresponding quasi-static solution \cite{marchesani2021quasi}.

The motivation of this article is to study the hydrodynamic limit for a hyperbolic system disturbed by a nonlocal external field.
We are particularly interested in the macroscopic behavior when the perturbation is extremely strong at the boundary.
Consider the ASEP on $\{1,\ldots,N-1\}$ where particles are created (resp.~annihilated) at each site $i$ with rate $V_i\rho_i$ (resp.~$V_i(1-\rho_i)$).
Assume two profiles $(V,\rho):(0,1)\to\R_+\times[0,1]$ such that
\begin{equation}\label{eq00}
	(V_i,\rho_i)=(V,\rho) \left( \tfrac iN \right), \quad \lim_{x\to0+} V(x) = \lim_{x\to1-} V(x) = \infty.
\end{equation}
In other words, a reservoir of density $\rho_i$ is placed at each site $i$, and the system exchanges particles with it with frequency $V_i$ that is growing rapidly near the boundaries.
When the exclusion dynamics is accelerated by $N$, the density profile shall evolve with the $L^\infty$ entropy solution to the following quasilinear balance law in the $[0,1]$-interval:
\begin{equation}\label{eq01}
	\partial_tu + \partial_x(u(1-u)) + G(x,u) = 0, \quad G=V(x)(u-\rho(x)),
\end{equation}
with proper boundary conditions.
We prove in Theorem \ref{thm:hydrodynamics} that, when the integrals of $V$ are infinity around both $0$ and $1$, the boundary conditions are $u|_{x=0}=\rho(0)$, $u|_{x=1}=\rho(1)$.
In sharp contrast to the equations obtained in \cite{Baha12, xu2022hydrodynamics, xu2021hydrodynamic}, the boundary values of $u$ are fixed by $\rho$ in a weak sense, see Proposition \ref{prop:boundary_continuity}.
Hence, any shock wave is attenuated while approaching the boundaries, and no boundary layer is observable at any positive macroscopic time.
A consequence of the hydrodynamic limit is the $L^1$-weak continuity in time of the entropy solution obtained in Corollary \ref{cor:regularity}.

The term $G$ in \eqref{eq01} acts as a source (resp.~sink) where $u$ is less (resp.~greater) than $\rho$, so it can be viewed as a relaxation scheme to the profile $\rho$.
When $\rho$ is a constant, it is a conservation system with relaxation introduced in \cite{Liu87}, with the first component degenerated to a stationary solution.
Such system is widely used to model non-equilibrium transport in kinetic theory and fluid dynamics.
In our situation, the entropy solution to the initial-boundary problem of \eqref{eq01} is constructed in different ways depending on the integrability of $V$, see Definition \ref{def:ent-sol-2} and \ref{def:ent-sol-1}.
We focus on the non-integrable case and discuss the integrable case briefly in Section \ref{sec:discussion}.

%
The proof in this article is proceeded in two main steps.
First, we prove that in the space-time scaling limit, the empirical Young measure of the particle field is concentrated on the space of Dirac-type Young measures.
Then, we show that the limit is a measure-valued entropy solution to \eqref{eq01} with proper boundary conditions.
The hydrodynamic limit then follows from the uniqueness of the entropy solution.
Both steps are proved through delicate analyses of the \emph{microscopic entropy production} associated with Lax entropy--flux pairs.

The use of Young measure and microscopic entropy production is present in the seminal paper \cite{rezakhanlou91}.
It is combined with the \emph{compensated compactness method} to prove the concentration property of the Young measure in \cite{Fritz04, fritz2004derivation}.
To use this method, additional oscillating dynamics is added to ASEP to create microscopic viscosity.
Finally, we point out that although the process studied in this article is attractive, we cannot apply the coupling argument used in \cite{Baha12} because the invariant measure is not product in general.

\section{Model and Results}

\subsection{Model}\label{sec:model}
For a scaling parameter $N \in \N_+$, consider the configuration space
\[\Omega_N := \big\{\eta=(\eta_i)_{0 \leq i \leq N}, \eta_i \in \{0,1\}\big\}.\]
The dynamics on $\Omega_N$ consists of three parts: the nearest-neighbor asymmetric exclusion, the external Glauber field and the boundary dynamics.
The exclusion is generated by
\begin{equation*}
	L_{\rm exc} f (\eta) =  \sum_{i=0}^{N-1} \left( c_{i,i+1}(\eta)+\frac{\sigma_N}2 \right) \Big[f (\eta^{i,i+1}) - f(\eta)\Big],
\end{equation*}
for any function $f$ on $\Omega_N$, where, for constant $p\in(\tfrac12,1]$,
\begin{equation}
	c_{i,i+1} (\eta) = p\eta_i (1-\eta_{i+1}) + (1-p) \eta_{i+1} (1-\eta_i),
\end{equation}
$\sigma_N$ is a parameter that grows to infinity slower than $N$, and $\eta^{i,i+1}$ is the configuration obtained from $\eta$ by swapping the values of $\eta_i$ and $\eta_{i+1}$.
The factor $\sigma_N$ stands for a strong microscopic viscosity, which is necessary for the technique used in Section \ref{sec:compensated compactness}.
The Glauber dynamics is generated by
\begin{equation*}
	L_{\rm G} f (\eta)= \frac1N\sum_{i=1}^{N-1} c_{i,{\rm G}} (\eta)  \Big[ f (\eta^{i}) - f(\eta)\Big],
\end{equation*}
where, for parameters $V_i>0$ and $\rho_i\in(0,1)$,
\begin{equation}\label{eq:c_iG}
	c_{i,{\rm G}}(\eta):=V_i[\rho_i(1-\eta_i)+(1-\rho_i)\eta_i],
\end{equation}
and $\eta^{i}$ is the configuration obtained from $\eta$ by flipping the value of $\eta_i$.
Finally, the sites $i=0$ and $N$ are attached to two extra birth-and-death dynamics, interpreted as boundary reservoirs.
The corresponding generator reads
\begin{equation*}
	L_{\rm bd}f(\eta) = c_0(\eta) \Big[ f (\eta^{0}) - f(\eta)\Big] + c_{N} (\eta)  \Big[ f (\eta^{N}) - f(\eta)\Big],
\end{equation*}
where, for boundary rates $c_{\rm in}^\pm$, $c_{\rm out}^\pm \ge 0$,
\begin{equation}\label{eq:c_bd}
	c_0 (\eta) = c_{\rm in}^- (1-\eta_0) + c_{\rm out}^- \eta_0, \quad  c_N (\eta) =  c_{\rm in}^+(1-\eta_N) + c_{\rm out}^+ \eta_N.
\end{equation}

Assume two profiles $V:(0,1)\to\R_+$ and $\rho:[0,1]\to(0,1)$ such that $V_i=V(\tfrac iN)$ and $\rho_i=\rho(\tfrac iN)$ for $i=1$, ..., $N-1$.
Suppose that $V\in\mathcal C^1((0,1);\R_+)$, $V\to+\infty$ as $x\to0$, $1$, and $\rho\in\mathcal C^1([0,1];(0,1))$ with Lipschitz continuous $\rho'$.
In particular,
\begin{equation}\label{uniform_bound}
	\inf_{x\in(0,1)} V(x)>0, \quad \inf_{x\in[0,1]} \rho(x) > 0, \quad \sup_{x\in[0,1]} \rho(x) < 1.
\end{equation}
The generator of the process then reads
\begin{equation}\label{eq:generator}
	L_N =  N\big(L_{\rm exc} + L_{\rm G} + L_{\rm bd}\big),
\end{equation}
where the factor $N$ corresponds to the hyperbolic time scale.

\subsection{Scalar balance law in a bounded domain}
In this part, we introduce the partial differential equation that is obtained in the hydrodynamic limit for the model defined in the previous section.
Let
\begin{equation}
	J(u) := (2p-1) u (1-u), \quad G(x,u) := V(x) (u-\rho(x))
\end{equation}
be the macroscopic flux and the source term corresponding to $L_{\rm exc}$ and $L_{\rm G}$, respectively.
Given measurable function $u_0:(0,1)\to[0,1]$, consider the following balance law: for $(t,x)\in\Sigma:=\R_+\times(0,1)$,
\begin{equation}\label{hyperbolic_pde}
	\partial_{t} u(t,x)+\partial_{x}[J(u(t,x))]+G(x,u(t,x)) =0, \quad u|_{t=0}=u_0,
\end{equation}
with proper boundary conditions that will be specified later.

The weak solution to \eqref{hyperbolic_pde} is in general not unique, so we are forced to consider the \emph{entropy solution}.
Recalling \eqref{eq00}, our aim is to examine the case when the strength of the source is extremely strong at the boundaries.
We see in Definition \ref{def:ent-sol-2} and \ref{def:ent-sol-1} that the definition of entropy solution is different when $V$ is integrable or not at the boundaries.

We begin with the case that $V$ is \emph{non-integrable} at $0$ and $1$, i.e., for any small $y$,
\begin{equation}\label{v_condition}
	\int_0^y V(x)\,dx = +\infty, \qquad \int_{1-y}^1 V(x)\,dx = +\infty.
\end{equation}
Recall that a \emph{Lax entropy--flux pair} of \eqref{hyperbolic_pde} is a pair of functions $f$, $q \in\mathcal C^2(\R)$ such that $f''\ge0$ and $q'=J'f'=(2p-1)(1-2u)f'(u)$ for all $u\in\R$.

\begin{definition}\label{def:ent-sol-2}
Suppose that $V$ satisfies \eqref{v_condition}.
We call $u=u(t,x)$ an entropy solution to \eqref{hyperbolic_pde} with the compatible boundary conditions
\begin{equation}\label{eq:bl-bd}
	u(\cdot,0)=\rho(0), \qquad u(\cdot,1)=\rho(1),
\end{equation}
if $u:\Sigma\to[0,1]$ is  measurable and satisfies the generalized entropy inequality
	\begin{equation}\label{entropy_inequality}
		\begin{aligned}
			\int_{0}^{1} f(u_{0}) \varphi(0,\cdot)\,dx+\iint_{\Sigma}\big[f (u) \partial_{t} \varphi+q (u) \partial_{x} \varphi\big]\,d x d t\\
			\geq \iint_{\Sigma} f'(u)V(x)(u-\rho) \varphi\,d x d t.
		\end{aligned}
	\end{equation}
	for any Lax entropy--flux pair $(f,q)$ and any $\varphi \in \mathcal{C}_c^2 (\R \times (0,1))$, $\varphi \geq 0$.
\end{definition}

\begin{remark}
\label{rem:energy_bound}
	When $\rho \in C^1$, $u$ in Definition \ref{def:ent-sol-2} satisfies the energy estimate
	\begin{equation}\label{energy_bound}
		\int_{0}^{T} \int_{0}^{1} V(x)[u(t, x)-\rho(x)]^{2}\,d x d t<\infty, \quad \forall\,T>0.
	\end{equation}
	Indeed, suppose that $\rho$ is smooth.
	For any $\varepsilon>0$, choose $\psi_\varepsilon \in \mathcal{C}_c^\infty((0,1))$ such that $\psi_\varepsilon(x)\in[0,1]$, $\psi_\varepsilon|_{[\varepsilon,1-\varepsilon]}\equiv1$ and $|\psi'_\varepsilon(x)|\le2\varepsilon^{-1}$.
	Fixing any $\phi \in \mathcal{C}_c^\infty(\R)$ such that $\phi\ge0$ and applying \eqref{entropy_inequality} with $f_1=\tfrac12u^2$, $\varphi_1=\phi(t)\psi_\varepsilon(x)$ and $f_2=-u$, $\varphi_2=\varphi_1\rho$ respectively, we obtain the upper bound
	\begin{align*}
		&\iint_\Sigma V(x)[u(t,x)-\rho(x)]^2\phi(t)\psi_\varepsilon(x)dxdt\\
		=\;&\iint_\Sigma f'_1(u)V(x)(u-\rho)\varphi_1\,dxdt + \iint_\Sigma f'_2(u)V(x)(u-\rho)\varphi_2\,dxdt\\
		\le\;&\phi(0)\int_0^1 \big[f_1(u_0)-u_0\rho\big]\psi_\varepsilon\,dx\\
		&+ \iint_\Sigma \big[f_1(u)\partial_t\varphi_1+q_1(u)\partial_x\varphi_1 - u\partial_t\varphi_2-J(u)\partial_x\varphi_2\big]dxdt,
\end{align*}
where $q_1$ is the flux corresponding to $f_1$.
	Since $|\psi_\varepsilon|\le1$, the first term on the right-hand side is bounded by $|\phi|_\infty\|f_1(u_0)-u_0\rho\|_{L^\infty}$.
	The second term reads
	\begin{align*}
		&\iint_\Sigma \big[(f_1(u)-u\rho)\partial_t\varphi_1 + (q_1(u)-J(u)\rho)\partial_x\varphi_1 - J(u)\rho'\varphi_1\big]dxdt\\
		\le\;&C \iint_\Sigma \big(|\phi'(t)\psi_\varepsilon(x)| + |\psi'_\varepsilon(x)\phi(t)| + |\rho'(x)\phi(t)\psi_\varepsilon(x)|\big)dxdt,
		\end{align*}
	where $C=\|f_1(u)-u\rho\|_{L^\infty} + \|q_1(u)-J(u)\rho\|_{L^\infty} + \|J(u)\|_{L^\infty}$.
	Since $|\psi_\varepsilon|\le1$, $|\psi'_\varepsilon|\le2\varepsilon^{-1}$ and is non-zero if and only if $x\in(0,\epsilon)\cup(1-\epsilon,1)$, it is bounded by $C_\phi(1+|\rho'|_\infty)$ with a constant $C_\phi$ that is independent of $\varepsilon$.
	Taking $\varepsilon\to0$ and using monotone convergence theorem,
	\[
		\iint_\Sigma V(x)[u(t,x)-\rho(x)]^2\phi(t)dxdt \le C_\phi(1+|\rho'|_\infty).
	\]
Since $\phi \in \mathcal{C}_c^\infty(\R;\R_+)$ is arbitrary, \eqref{energy_bound} holds for any finite $T>0$.
By standard argument of compactness, the estimate can be extended to any $\rho \in C^1([0,1])$.
\end{remark}

\begin{remark}
\label{rem:boundary_value}
If $u$ is continuous in space, \eqref{energy_bound} together with \eqref{v_condition} implies that $u(t,0)=\rho(0)$ and $u(t,1)=\rho(1)$ for almost all $t>0$.
Hence, \eqref{eq:bl-bd} turns out to be the reasonable choice of the boundary conditions, see also Proposition \ref{prop:boundary_continuity} below.
\end{remark}

The following uniqueness criteria is taken from \cite[Theorem 2.12]{Xu24}.

\begin{proposition}\label{prop:uniq}
Assume further that
\begin{align}
	\label{v_condition_1}
	&\limsup_{y \rightarrow 0+} \frac{1}{y^2} \int_{0}^y \left[ \frac{1}{V(x)} + \frac{1}{V(1-x)} \right] \,dx < + \infty,\\
	\label{v_condition_2}
	&\lim_{y \rightarrow 0+} \left[ \int_0^y V(x) \big[\rho(x) - \rho(0)\big]^2 dx + \int_{1-y}^1 V(x) \big[\rho(x) - \rho(1)\big]^2 dx \right] = 0.
	\end{align}
Then, there is at most one function $u \in L^\infty(\Sigma)$ that fulfills Definition \ref{def:ent-sol-2}.
\end{proposition}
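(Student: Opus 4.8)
The natural approach is Kruzhkov's doubling of variables, adapted to the bounded interval and the singular source. Let $u,v\in L^\infty(\Sigma)$ be two entropy solutions in the sense of Definition~\ref{def:ent-sol-2} sharing the initial datum $u_0$; the goal is an $L^1$-contraction, which forces $u=v$. I would insert into \eqref{entropy_inequality} the Kruzhkov pairs $f_k(u)=|u-k|$, $q_k(u)=\mathrm{sgn}(u-k)(J(u)-J(k))$ (realized as limits of smooth convex Lax pairs), freeze $k=v(s,y)$, apply the symmetric inequality to $v$ with $k=u(t,x)$, and test against a mollifier $\varphi_\delta$ concentrating on the diagonal $\{t=s,\ x=y\}$. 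The decisive structural feature is that the source is \emph{dissipative}: on the diagonal the two source contributions combine into $\mathrm{sgn}(u-v)\,[G(x,u)-G(x,v)]=V(x)\,|u-v|\ge0$, which appears with the sign that \emph{aids} the contraction and may simply be discarded. After the diagonal limit the problem reduces to
\begin{equation*}
	\iint_\Sigma\big[\,|u-v|\,\partial_t\psi+\mathrm{sgn}(u-v)\,(J(u)-J(v))\,\partial_x\psi\,\big]\,dx\,dt\ge\iint_\Sigma V(x)\,|u-v|\,\psi\,dx\,dt
\end{equation*}
for all $0\le\psi\in\mathcal C_c^2(\R\times(0,1))$.

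A first obstacle is that these test functions vanish near $x=0,1$, so to recover the whole interval one must let $\psi$ approach $\mathbf{1}_{(0,1)}$ and control the spatial boundary flux that this produces. Taking $\psi=\phi(t)\,\psi_\varepsilon(x)$ with the cutoff $\psi_\varepsilon$ of Remark~\ref{rem:energy_bound}, the derivative $\psi_\varepsilon'$ is supported in $(0,\varepsilon)\cup(1-\varepsilon,1)$ with $|\psi_\varepsilon'|\le2\varepsilon^{-1}$, and since $J$ is Lipschitz on $[0,1]$ the flux term near $0$ is bounded by $C\varepsilon^{-1}\int_0^T\!\int_0^\varepsilon|u-v|\,dx\,dt$. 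Estimating $|u-v|\le|u-\rho(x)|+|v-\rho(x)|$ and applying \CS against the weight $V$ yields the bound
\begin{equation*}
	\frac{C}{\varepsilon}\Big(\int_0^\varepsilon\frac{dx}{V(x)}\Big)^{1/2}\Big(\int_0^T\!\int_0^\varepsilon V(x)\,[u-\rho(x)]^2\,dx\,dt\Big)^{1/2},
\end{equation*}
where \eqref{v_condition_1} furnishes $\int_0^\varepsilon V^{-1}\le C\varepsilon^2$, absorbing the factor $\varepsilon^{-1}$, and the remaining weighted $L^2$-mass tends to $0$ by the energy estimate \eqref{energy_bound}. The symmetric computation applies at $x=1$, so the boundary flux vanishes as $\varepsilon\to0$.

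I expect the genuine difficulty to lie not in this flux term but in the \emph{doubling of the singular source} near the endpoints, where the dissipative identity above is only exact on the diagonal. For a mollifier of width $\delta$ one must compare $V(x)(u-\rho(x))$ with $V(y)(v-\rho(y))$ at points with $|x-y|\lesssim\delta$; splitting off the good term $V(x)(u-v)$ leaves the remainder $V(x)(v-\rho(x))-V(y)(v-\rho(y))$, in which $V$ itself blows up as $x,y\to0,1$ and which need not be locally integrable there, since $\int_0^\varepsilon V=\infty$. Controlling the net contribution of this remainder—after integrating against $\psi_\varepsilon$ and sending $\delta\to0$—is precisely what demands the quantitative balance encoded in the hypotheses: \eqref{v_condition_1} limits the rate at which $V$ diverges, while \eqref{v_condition_2} forces $\rho$ to reach its boundary values $\rho(0),\rho(1)$ fast enough against the weight $V$, so that the mismatch between $\rho(x)$ and the compatible constants \eqref{eq:bl-bd} is harmless. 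Once this is achieved, passing $\varepsilon\to0$ and discarding the favourable source leaves $\iint_\Sigma|u-v|\,\phi'(t)\,dx\,dt\ge0$ for every $0\le\phi\in\mathcal C_c^\infty(\R)$; since $u$ and $v$ carry the same initial datum through the term $\int_0^1 f(u_0)\varphi(0,\cdot)\,dx$ in \eqref{entropy_inequality}, a standard \Gro argument gives $\|u(t,\cdot)-v(t,\cdot)\|_{L^1(0,1)}\equiv0$, i.e.\ $u=v$ almost everywhere, which is the asserted uniqueness.
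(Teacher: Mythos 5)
The paper does not prove this proposition: it is imported verbatim from \cite[Theorem 2.12]{Xu24}, so there is no in-paper argument to compare against line by line. Judged on its own terms, your outline follows the route one would expect that reference to take: Kruzhkov doubling with the entropies $|u-k|$, the dissipative sign $\mathrm{sgn}(u-v)[G(x,u)-G(x,v)]=V(x)|u-v|\ge0$ of the source, and — the one step genuinely specific to this setting — the vanishing of the boundary flux through the cutoff $\psi_\varepsilon$, which you correctly obtain from $|J(u)-J(v)|\le C|u-v|$, the splitting $|u-v|\le|u-\rho|+|v-\rho|$, \CS against the weight $V$, condition \eqref{v_condition_1} to absorb the factor $\varepsilon^{-1}$, and the energy estimate \eqref{energy_bound}. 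That estimate is right and is the heart of the matter.

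Two criticisms. First, the step you single out as "the genuine difficulty" — doubling the singular source near the endpoints — is left unresolved in your write-up, and in fact it is not where the hypotheses are needed: if you localize with $\psi_\varepsilon$ \emph{before} doubling and send the mollifier width $\delta\to0$ at fixed $\varepsilon$, the whole doubling takes place on a compact subset of $(0,1)$ where $V$ and $V\rho$ are $\mathcal C^1$ and bounded, so the remainder $V(x)(v-\rho(x))-V(y)(v-\rho(y))$ is $O_\varepsilon(\delta)$ and vanishes trivially; only afterwards does one send $\varepsilon\to0$, where your flux estimate takes over. As written, you gesture at \eqref{v_condition_1}--\eqref{v_condition_2} to close this step without doing so, which is the one real gap in the argument; ordering the limits as above closes it. Second, and relatedly, your proof never uses \eqref{v_condition_2}. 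That is not a logical error (an unused hypothesis does no harm), but it signals that you have not fully reconciled your argument with the statement; in \cite{Xu24} that condition presumably enters in tying the interior formulation to the boundary data $\rho(0)$, $\rho(1)$ (or in the existence half of well-posedness), not in the $L^1$-contraction itself. Finally, a cosmetic point: no \Gro lemma is needed at the end — the source has the favourable sign, so the map $t\mapsto\|u(t,\cdot)-v(t,\cdot)\|_{L^1}$ is simply non-increasing from the value $0$ at $t=0$.
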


\begin{remark}
Suppose that $V>0$ satisfies \eqref{v_condition_1}.
By \CS inequality,
\[\liminf_{y\to0+} \int_0^y V(x)dx \ge \liminf_{y\to0+} \bigg(\frac1{y^2}\int_0^y \frac1{V(x)}dx\bigg)^{-1} > 0,\]
which means that $V$ is not integrable at $0$.
The same argument holds for the integration on $(1-y,1)$.
Therefore, \eqref{v_condition_1} contains the non-integrable condition \eqref{v_condition}.
\end{remark}

Now we turn to the \emph{integrable case}: $V \in L^1((0,1))$.
The next definition is first introduced by F.~Otto, see \cite[Eq.~9]{Otto96}.

\begin{definition}
We call $(F, Q) \in \mathcal{C}^{2}([0,1]^2 ; \mathbb{R}^{2})$ a boundary entropy--flux pair if
\begin{enumerate}[(i)]
	\item for all $k \in [0,1]$, $(F,Q) (\cdot, k)$ is a Lax entropy-flux pair,  \emph{i.e.}, $\partial_uQ(\cdot,k)=J'\partial_uF(\cdot,k)$;
	\item for all $k \in [0,1]$, $F(k, k)=\partial_{u} F(u, k)|_{u=k}=Q(k, k)=0$.
\end{enumerate}
\end{definition}

For $V$ integrable, the definition of entropy solution is the same as \cite[Proposition 2]{Otto96} for $V\equiv0$ and \cite[Definition 1]{Martin07} for $V$ bounded and smooth.

\begin{definition}\label{def:ent-sol-1}
Let $\alpha$, $\beta\in[0,1]$ be two constants and suppose that $V \in L^1 ((0,1))$.
We call $u=u(t,x)$ an entropy solution to \eqref{hyperbolic_pde} with the boundary conditions given by
\begin{equation}
	u(t,0)=\alpha, \qquad u(t,1)=\beta,
\end{equation}
if $u: \Sigma\to[0,1]$ is a measurable function such that for any boundary entropy--flux pair $(F,Q)$, any $k \in [0,1]$, and any $\varphi \in \mathcal{C}_c^2 (\R^2)$ such that $\varphi \geq 0$,
\begin{equation}
	\begin{aligned}
		&\int_{0}^{1} f_k(u_{0}) \varphi(0,\cdot)\,dx + \iint_{\Sigma} \big[f_k(u) \partial_{t} \varphi + q_k(u) \partial_{x} \varphi\big]\,d x d t
\\
		&\quad\geq \iint_{\Sigma} f'_k(u) V(x)(u-\rho) \varphi\,d x d t -  \int_{0}^{\infty}\big[f_k(\beta)\varphi(\cdot, 1)+ f_k(\alpha)\varphi(\cdot, 0)\big]\,d t,
	\end{aligned}
\end{equation}
where $(f_k,q_k):=(F,Q)(\cdot,k)$.
\end{definition}



Since the integrable case is not the focus of this paper, we omit the uniqueness and other properties and refer to \cite{Xu24} and the references therein.

\subsection{Hydrodynamic limit}
Let $\{\eta^N(t)\in\Omega_N;t\ge0\}$ be the Markov process generated by $L_N$ in \eqref{eq:generator} and initial distribution $\mu_N$.
Through this article, the superscript $N$ in $\eta^N$ is omitted when there is no confusion.
Denote by $\P_{\mu_N}$ the distribution of $\eta(\cdot)$ on $\mathcal D([0,\infty),\Omega_N)$, the space of all c\`adl\`ag paths on $\Omega_N$, and by $\E_{\mu_N}$ the expectation of $\P_{\mu_N}$.

Suppose that the sequence of $\mu_N$ is associated with a measurable function $u_0:(0,1)\to[0,1]$ in the following sense: for any $\psi\in\mathcal C(\R)$,
\begin{equation}\label{eq:initial}
	\lim_{N \rightarrow \infty} \mu_N \left\{ \left| \frac{1}{N} \sum_{i=0}^N \eta_i(0)\psi \left( \tfrac iN \right) - \int_0^1 u_0 (x) \psi (x) dx \right| > \delta \right\} = 0, \quad \forall\,\delta>0.
\end{equation}
Our main result shows that in the non-integrable case,  the empirical density of the particles converges, as $N\to\infty$, to the entropy solution to \eqref{hyperbolic_pde} and \eqref{eq:bl-bd}.

\begin{theorem}\label{thm:hydrodynamics}
	Assume \eqref{v_condition_1}, \eqref{v_condition_2} and \eqref{eq:initial}.
	Also assume that
	\begin{equation}\label{sigma_condition}
		\lim_{N\to\infty} N^{-1}\sigma_N^2 = \infty, \qquad \lim_{N\to\infty} N^{-1}\sigma_N = 0.
	\end{equation}
	Then, for any $\psi \in \mathcal{C}(\R)$ and  almost all $t > 0$,
	\begin{equation}
	\lim_{N\to\infty} \P_{\mu_N} \left\{ \left| \frac{1}{N} \sum_{i=0}^N \eta_i (t) \psi \left( \tfrac{i}{N} \right) - \int_0^1 u(t,x)\psi (x) dx \right| >\delta \right\} = 0, \quad \forall\,\delta>0,
	\end{equation}
	where $u$ is the unique entropy solution in Definition \ref{def:ent-sol-2}.
\end{theorem}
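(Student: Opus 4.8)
The plan is to follow the two-step scheme announced in the introduction: first show that the limit points of the empirical Young measure concentrate on Dirac masses, and then identify the resulting density field as the unique entropy solution of Definition~\ref{def:ent-sol-2}, invoking Proposition~\ref{prop:uniq} to promote subsequential convergence to convergence of the full sequence. I would begin by fixing a mesoscopic block size $\ell=\ell(N)\to\infty$ with $\ell/N\to0$, writing $\bar\eta_i^\ell(t)$ for the local density average around site $i$, and introducing the empirical Young measure $\pi^N$ on $\Sigma\times[0,1]$ defined by its action on $H\in\mathcal C_c(\Sigma\times[0,1])$,
\[
	\langle \pi^N, H\rangle := \frac1N \sum_i \int_0^\infty H\big(t,\tfrac iN,\bar\eta_i^\ell(t)\big)\,dt.
\]
Since $[0,1]$ is compact, the laws of $\{\pi^N\}$ are tight, and any weak limit point disintegrates as $dt\,dx\,\nu_{t,x}(d\lambda)$ with $\nu_{t,x}$ a probability measure on $[0,1]$; the space and time marginals converge to Lebesgue measure by \eqref{eq:initial} and a standard equicontinuity estimate. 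The two goals then become (a) that $\nu_{t,x}$ satisfies a measure-valued version of \eqref{entropy_inequality} with interior test functions, and (b) that $\nu_{t,x}=\delta_{u(t,x)}$.

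The analytic core is a family of microscopic entropy-production estimates. For a Lax pair $(f,q)$ I would apply Dynkin's formula to $f(\bar\eta_i^\ell(t))$, controlling the quadratic variation of the resulting martingale by its carré-du-champ, and perform a discrete summation by parts on $L_N f(\bar\eta_i^\ell)$, splitting the exchange part into symmetric and antisymmetric components. The symmetric part, weighted by $\sigma_N/2$, produces a nonnegative Dirichlet form, while the antisymmetric part yields the discrete gradient of the current that converges to $\partial_x q$. The scaling hypothesis \eqref{sigma_condition} enters precisely here: $N^{-1}\sigma_N\to0$ ensures the added viscosity vanishes macroscopically so the hyperbolic flux $J$ is unchanged, whereas $N^{-1}\sigma_N^2\to\infty$ makes the Dirichlet form strong enough, through block (local-averaging) estimates driven by the viscosity, to place the entropy-production residuals in a compact subset of $H^{-1}_{\mathrm{loc}}(\Sigma)$. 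The Glauber part $L_{\rm G}$ contributes the source $f'(u)V(x)(u-\rho)$, and from it I would simultaneously extract the microscopic analogue of the energy estimate \eqref{energy_bound}, bounding $\frac1N\sum_i V_i\,\E[(\bar\eta_i^\ell-\rho_i)^2]$ uniformly in $N$ by the same computation as in Remark~\ref{rem:energy_bound}. Passing to the limit in the microscopic entropy balance, and identifying block averages of the current and of the source with $\langle q,\nu_{t,x}\rangle$ and $\langle f'V(\cdot-\rho),\nu_{t,x}\rangle$, yields the measure-valued entropy inequality for $\nu_{t,x}$.

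Concentration of $\nu_{t,x}$ onto a Dirac mass is then obtained by the compensated-compactness method of Tartar and Murat. Given two Lax pairs, the associated entropy-production fields are compact in $H^{-1}_{\mathrm{loc}}(\Sigma)$ by the estimate above, so the div--curl lemma furnishes the commutation relation $\langle f_1q_2-f_2q_1,\nu_{t,x}\rangle=\langle f_1,\nu_{t,x}\rangle\langle q_2,\nu_{t,x}\rangle-\langle f_2,\nu_{t,x}\rangle\langle q_1,\nu_{t,x}\rangle$. Because the flux $J(u)=(2p-1)u(1-u)$ is genuinely nonlinear, $J''=-2(2p-1)\ne0$ for $p\in(\tfrac12,1]$, Tartar's argument forces $\nu_{t,x}=\delta_{u(t,x)}$ for almost every $(t,x)$, where $u(t,x):=\int_0^1\lambda\,\nu_{t,x}(d\lambda)$. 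The measure-valued inequality then collapses to \eqref{entropy_inequality}, and the inherited energy bound \eqref{energy_bound} together with the non-integrability \eqref{v_condition} forces $u(\cdot,0)=\rho(0)$ and $u(\cdot,1)=\rho(1)$ as in Remark~\ref{rem:boundary_value}; hence $u$ is an entropy solution in the sense of Definition~\ref{def:ent-sol-2}. Under \eqref{v_condition_1}--\eqref{v_condition_2}, Proposition~\ref{prop:uniq} identifies $u$ as the \emph{unique} such solution, so the whole sequence converges; since the limit is deterministic, weak convergence of the empirical density upgrades to convergence in probability for almost every $t$, which is the assertion of the theorem.

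The main obstacle I anticipate is the boundary region, where $V\to\infty$ renders both the source and the accelerated reservoir dynamics $L_{\rm bd}$ singular. Controlling the Glauber entropy production uniformly up to the boundary, so as to obtain the microscopic energy estimate with the singular weight $V$, is delicate, and it is here that the condition \eqref{v_condition_1} on $1/V$ and the compatibility condition \eqref{v_condition_2} on $\rho$ play their role. One must also verify that the $O(N)$ boundary flips at sites $0$ and $N$ do not impose a competing boundary value: the point is that for $i$ near the boundary the overwhelming relaxation rate $V_i$ pins the local density to $\rho$ before the reservoir can generate a boundary layer, so the boundary contributions to the entropy balance are negligible in the limit. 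Making this heuristic quantitative, uniformly in the block size $\ell$ and the viscosity scale $\sigma_N$, is the crux of the argument and the principal point of departure from the coupling-based treatment of \cite{Baha12}.
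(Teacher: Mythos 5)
Your proposal is correct and follows essentially the same route as the paper: tightness of the empirical Young measure built from mesoscopic block averages, a microscopic entropy-production decomposition whose residuals are controlled in $H^{-1}_{\rm loc}$ by the $\sigma_N$-viscosity Dirichlet form (with test functions localized away from the boundary so that $L_{\rm bd}$ and the singularity of $V$ never enter), concentration on Dirac Young measures via the Murat--Tartar/div--curl argument using genuine nonlinearity of $J$, passage to the measure-valued entropy inequality with the Glauber source, and finally identification through the uniqueness result of Proposition~\ref{prop:uniq}. The only cosmetic difference is that the paper packages the compensated-compactness step as the $Y_N+Z_N$ decomposition of \cite{Fritz04} and derives the energy bound \eqref{energy_bound} at the PDE level from the entropy inequality rather than microscopically, but these are presentational rather than substantive departures.
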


\begin{remark}
Below we list two important remarks concerning Theorem \ref{thm:hydrodynamics}.
\begin{enumerate}[(i)]
\item We assume \eqref{v_condition_1} and \eqref{v_condition_2} only for the uniqueness in Proposition \ref{prop:uniq}.
If $V$ satisfies only \eqref{v_condition}, our argument proves that the empirical distribution of $\eta^N$ is tight and all limit points are concentrated on the possible entropy solutions.
\item Observe that the rates in $L_{\rm bd}$ do not appear in the limit.
Indeed, let $F_\epsilon(\eta) = \sum_{0 \le i \le \epsilon N} \eta_i$ be the cumulative mass on $\{\eta_0,\ldots,\eta_{[\epsilon N]}\}$.
Then,
\begin{align*}
L_N F_\epsilon(\eta) =\,&N\big[c_{\rm in}^- - (c_{\rm in}^-+c_{\rm out}^-)\eta_0\big] -NJ_{[\epsilon N]}\\
&- N(1-p+\sigma_N)(\eta_{[\epsilon N]}-\eta_{[\epsilon N]+1}) + \sum_{1 \le i \le \epsilon N} V_i(\rho_i-\eta_i),
\end{align*}
where $J_i=(2p-1)\eta_i(1-\eta_{i+1})$.
From \eqref{v_condition}, $\sum_{1 \le i \le \epsilon N} V_i \gg N$.
Hence, to make the contribution of the last term be of order $\mathcal O(N)$, the mass density of the boundary block $\{1,\ldots,\epsilon N\}$ should be near to $\rho(0)$.
\end{enumerate}
\end{remark}

As a corollary of Theorem \ref{thm:hydrodynamics}, the regularity of the entropy solution is improved.

\begin{corollary}\label{cor:regularity}
Assume \eqref{v_condition_1} and \eqref{v_condition_2}.
Let $u$ be the unique entropy solution to \eqref{hyperbolic_pde} and \eqref{eq:bl-bd} in Definition \ref{def:ent-sol-2}.
Then,
\[u \in L^\infty((0,\infty)\times(0,1)) \cap \mathcal C([0,\infty);L^1),\]
where $L^1=L^1((0,1))$ is endowed with the weak topology.
In particular, the convergence in Theorem \ref{thm:hydrodynamics} holds for all $t>0$.
\end{corollary}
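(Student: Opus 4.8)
The plan is to transfer the time-regularity of the microscopic empirical density to the limit $u$ through Theorem \ref{thm:hydrodynamics}. Since $u$ takes values in $[0,1]$, the inclusion $u\in L^\infty((0,\infty)\times(0,1))$ with $\|u\|_\infty\le1$ is immediate, so only the weak-$L^1$ continuity in time requires work. Because the family $\{u(t,\cdot)\}_{t\ge0}$ is uniformly bounded in $L^\infty((0,1))$ and $\mathcal C_c^\infty((0,1))$ is dense in $L^1((0,1))$, it suffices to prove that $t\mapsto\int_0^1 u(t,x)\psi(x)\,dx$ is continuous for every $\psi$ in a countable set $\mathcal D\subset\mathcal C_c^\infty((0,1))$ dense in $L^1$; the uniform bound then upgrades this to continuity against all of $L^\infty=(L^1)^*$, which is exactly the weak topology.

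Fix such a $\psi$, with $\operatorname{supp}\psi\subset[\varepsilon,1-\varepsilon]$, and set $X_N^\psi(t):=\frac1N\sum_{i=0}^N\eta_i(t)\psi(i/N)$. The key is a uniform-in-$N$ time-equicontinuity bound, obtained from the Dynkin martingale
\[
M_N^\psi(t)=X_N^\psi(t)-X_N^\psi(0)-\int_0^t L_N X_N^\psi(s)\,ds .
\]
A direct computation of the generator gives
\[
L_N X_N^\psi = L_{\mathrm{exc}}\Big(\textstyle\sum_i\eta_i\psi_i\Big)+\frac1N\sum_{i=1}^{N-1}V_i(\rho_i-\eta_i)\psi_i + L_{\mathrm{bd}}\Big(\textstyle\sum_i\eta_i\psi_i\Big),
\]
with $\psi_i=\psi(i/N)$. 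Since $\psi$ vanishes near the endpoints, $L_{\mathrm{bd}}$ contributes nothing for large $N$; $V_i$ is bounded on $\operatorname{supp}\psi$ so the Glauber term is bounded by $C_\psi$; and summation by parts in $L_{\mathrm{exc}}$ yields an asymmetric term bounded by $C_\psi$ together with a viscous term of order $\sigma_N/N\to0$ by \eqref{sigma_condition}. Hence $|L_N X_N^\psi|\le C_\psi$ uniformly in $N$, while the carré du champ is of order $\sigma_N/N^2+1/N\to0$ per unit time, so $\E_{\mu_N}[(M_N^\psi(t)-M_N^\psi(s))^2]\le\epsilon_N|t-s|$ with $\epsilon_N\to0$. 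Combining the two,
\[
\E_{\mu_N}\big|X_N^\psi(t)-X_N^\psi(s)\big|\le C_\psi|t-s|+\sqrt{\epsilon_N|t-s|}.
\]

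To finish the continuity I would feed in Theorem \ref{thm:hydrodynamics}: for almost every $s,t$ the uniformly bounded variables $X_N^\psi(s),X_N^\psi(t)$ converge in probability, hence in $L^1(\P_{\mu_N})$, to $\int u(s)\psi$ and $\int u(t)\psi$. Passing to the limit gives $|\int u(t)\psi-\int u(s)\psi|\le C_\psi|t-s|$ for a.e.\ $s,t$, so $t\mapsto\int u(t)\psi$ agrees a.e.\ with a Lipschitz function. Running this over $\psi\in\mathcal D$ and intersecting the full-measure sets of good times produces, by the density/boundedness argument, a representative of $u$ in $\mathcal C([0,\infty);L^1_w)$. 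For the final ``all $t$'' claim I would combine the equicontinuity with the a.e.-$t$ convergence: given $t$, pick a good time $s$ close to it and split $X_N^\psi(t)-\int u(t)\psi$ into the microscopic increment, the convergence at $s$, and the continuity of $u$, each made small; for a general $\psi\in\mathcal C(\R)$ the boundary layers $i/N\in(0,\delta)\cup(1-\delta,1)$ contribute $O(\delta)$ uniformly and are discarded.

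The main obstacle is the Glauber term $\frac1N\sum_i V_i(\rho_i-\eta_i)\psi_i$, whose coefficient $V_i$ blows up at the endpoints, so uniform control of the drift fails for test functions that do not vanish at the boundary. The resolution — which is precisely the conceptual content of the corollary — is that weak-$L^1$ continuity only requires test functions dense in $L^1$, so one may restrict to $\psi$ compactly supported in $(0,1)$, where $V$ is bounded and $L_{\mathrm{bd}}$ disappears; the boundary contribution to the full statement is then absorbed by the crude $O(\delta)$ estimate.
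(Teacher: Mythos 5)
Your argument is correct and follows essentially the same route as the paper: the paper deduces the corollary from the sample-path regularity in Proposition \ref{prop:q}~$(i)$, which is itself established (in the proof of Lemma \ref{lem:tight}) by exactly the ingredients you reconstruct — the Dynkin martingale for $\langle\pi^N(t),\psi_*\rangle$ with $\psi_*\in\mathcal C_c^2((0,1))$ so that $L_{\rm bd}$ drops out and $V$ is bounded on the support, the $o(1)$ quadratic variation, and the $L^1$-density approximation of general test functions combined with the uniform $[0,1]$ bound. The only difference is presentational: you re-derive the equicontinuity directly rather than citing the already-proved tightness and path-continuity of the limit measure $\Q$.
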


Under \eqref{v_condition_1}, the macroscopic density near the boundary is prescribed by the reservoir in the following sense: for any $t>0$,
\begin{equation}\label{eq:boundary_continuity}
  \lim_{y\to0+} \lim_{N\to\infty} \int_0^t \frac{1}{yN} \sum_{i=0}^{\lfloor yN \rfloor} \eta_i (s)ds = t\rho(0) \quad \text{in $\P_{\mu_N}$-probability},
\end{equation}
and similarly for the right boundary.
Indeed, for any $t>0$, $y\in(0,1)$ and $\delta>0$,
\begin{equation*}
  \begin{aligned}
    &\frac1y\int_0^t \int_0^y |u(s,x)-\rho(x)|\,dxds\\
    \le\,&\frac1{4\delta}\int_0^t \int_0^y V(x)(u-\rho)^2dxds + \frac{t\delta}{y^2}\int_0^y \frac1{V(x)}dx.
  \end{aligned}
\end{equation*}
Taking $y\to0+$, \eqref{energy_bound} together with \eqref{v_condition_1} yields that
\begin{equation*}
  \lim_{y\to0+} \frac1y\int_0^t \int_0^y |u(s,x)-\rho(x)|\,dxds \le Ct\delta.
\end{equation*}
As $\delta$ is arbitrary, the limit is $0$.
Recall that $\rho$ is continuous, so we have
\begin{equation*}
  \lim_{y\to0+} \frac1y\int_0^t \int_0^y |u(s,x)-\rho(0)|\,dxds = 0, \quad \forall\,t>0.
\end{equation*}
Combining this with Theorem \ref{thm:hydrodynamics}, we obtain \eqref{eq:boundary_continuity} for all positive time $t$.
These limits can be derived directly from the microscopic dynamics by imposing a slightly stronger growth condition on $V$, see the next proposition.

\begin{proposition}\label{prop:boundary_continuity}
Suppose that $V$ satisfies the following condition:
\begin{equation}\label{v_condition_3}
\lim_{y\to0+} \Big\{y\inf_{x\in(0,y)} V(x)\Big\} = \infty.
\end{equation}
Then, \eqref{eq:boundary_continuity} and the similar limit for the right boundary hold for all $t > 0$.
\end{proposition}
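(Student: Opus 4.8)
The plan is to prove the equivalent statement that, as $N\to\infty$ and then $y\to0$,
\[
\frac1{yN}\int_0^t\sum_{i=1}^{\lfloor yN\rfloor}\big(\eta_i(s)-\rho_i\big)\,ds\longrightarrow0\qquad\text{in }\P_{\mu_N}\text{-probability}.
\]
The contribution of the single site $\eta_0$ is $O((yN)^{-1})$ and hence negligible, and $\rho_i$ may be replaced by $\rho(0)$ at the end using the continuity of $\rho$. The decisive idea is to test against the reservoir field through the weight $1/V_i$. Since the flip computation gives $NL_{\rm G}\big(\sum_i a_i\eta_i\big)=\sum_i a_iV_i(\rho_i-\eta_i)$, the choice $a_i=g_i/V_i$ turns the strong Glauber relaxation into direct control on the \emph{unweighted} local density: $NL_{\rm G}H=\sum_i g_i(\rho_i-\eta_i)$, where $H:=\sum_i(g_i/V_i)\eta_i$ and $g_i\in[0,1]$ equals $1$ on $\{1,\dots,\lfloor yN\rfloor\}$ and decreases linearly to $0$ across a thin layer $\{\lfloor yN\rfloor,\dots,\lfloor yN\rfloor+\ell\}$ of mesoscopic width $\ell=\lfloor\epsilon yN\rfloor$. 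Dynkin's formula then yields
\[
\int_0^t\sum_i g_i(\rho_i-\eta_i)\,ds=\big[H(\eta(t))-H(\eta(0))\big]-\mathcal M(t)-\int_0^t NL_{\rm exc}H\,ds,
\]
the boundary generator $L_{\rm bd}$ dropping out because $H$ ignores the sites $0$ and $N$.

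I would then bound the right-hand side after division by $yN$. With $v_y:=\inf_{(0,y)}V$, the increment of $H$ is $O(yN/v_y)$, hence $O(1/v_y)\to0$ after division, since \eqref{v_condition_3} forces $v_y\to\infty$. The martingale $\mathcal M$ is controlled through its predictable quadratic variation: the exclusion part contributes $\lesssim N\sigma_N\sum_i(a_i-a_{i+1})^2$ and the Glauber part $\lesssim\sum_i V_i a_i^2\le\sum_i 1/V_i=O(yN)$, and using $\sigma_N=o(N)$ from \eqref{sigma_condition} one checks $\E_{\mu_N}[\mathcal M(t)^2]/(yN)^2\to0$. The linear taper produces a Glauber tail $\sum_{i>\lfloor yN\rfloor}g_i(\rho_i-\eta_i)$ of size $O(\ell)=O(\epsilon yN)$, i.e.\ $O(\epsilon)$ after division, which is removed by letting $\epsilon\to0$ at the very end.

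The main difficulty is the exclusion drift $\int_0^t NL_{\rm exc}H\,ds$, and within it the symmetric viscous current
\[
\frac{N\sigma_N}{2}\sum_i(a_i-a_{i+1})(\eta_{i+1}-\eta_i),
\]
which is a priori of order $N\sigma_N\to\infty$. Here I would invoke the microscopic energy (Dirichlet-form) estimate underlying the compensated-compactness scheme of Section~\ref{sec:compensated compactness}, namely $\int_0^t D_N(\sqrt{f_s})\,ds\le CN$ for the symmetric Dirichlet form $D_N$. Each summand $\eta_{i+1}-\eta_i$ is odd under the swap of sites $i,i+1$, so the standard flux estimate (Cauchy--Schwarz against $D_N$) gives
\[
\E_{\mu_N}\Big|\int_0^t\tfrac{N\sigma_N}{2}\sum_i(a_i-a_{i+1})(\eta_{i+1}-\eta_i)\,ds\Big|\lesssim\Big(N\sigma_N\sum_i(a_i-a_{i+1})^2\Big)^{1/2}(tN)^{1/2}.
\]
Because $1/V\in\mathcal C^1$, the bulk bonds satisfy $a_i-a_{i+1}=O(1/N)$, so $\sum_{i\le\lfloor yN\rfloor}(a_i-a_{i+1})^2=O(y/N)$, while the tapered bonds contribute $O\big(1/(\ell V(y)^2)\big)$; after division by $yN$ every piece carries a factor $(\sigma_N/N)^{1/2}\to0$. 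The lone left edge at the bond $(0,1)$ has weight $1/V_1$ with $V_1=V(1/N)\gg N$ by \eqref{v_condition_3} applied at scale $1/N$, so it is negligible as well. Finally, the asymmetric part of $NL_{\rm exc}H$ is dominated pathwise by $N\sum_i|a_i-a_{i+1}|$, which after division is $O\big(y^{-1}\,\mathrm{TV}_{(0,y)}(1/V)\big)\to0$ as $y\to0$ under the regularity of $V$ and \eqref{v_condition_3}.

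Combining these bounds shows the displayed time average tends to $0$ in $L^1(\P_{\mu_N})$, hence in probability, which is \eqref{eq:boundary_continuity}; the right boundary is treated symmetrically. The crux of the whole argument, and the only place where the growth hypothesis \eqref{v_condition_3} is essential beyond forcing $v_y\to\infty$, is the treatment of the viscous current: its a priori size $N\sigma_N$ is tamed only by combining the Dirichlet-form estimate, the smallness $\sigma_N=o(N)$, the $\mathcal C^1$-smoothness of $1/V$ in the bulk, the mesoscopic taper at the right edge, and the decisive fact that the reservoir strength at the first site already dominates the whole acceleration, $V_1\gg N$.
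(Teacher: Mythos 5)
Your route is genuinely different from the paper's. The paper does not use a test function at all: it reduces to a fixed window $(0,\varepsilon)$, applies the entropy inequality against the product measure $\nu^N_{\rho(\cdot)}$ with the carefully tuned constant $A=\sqrt{\varepsilon/\inf_{(0,\varepsilon)}V}$, bounds the exponential moment by $CtA/\varepsilon$ using independence, and controls the relative entropy of the marginal $\mu^{(\varepsilon N)}(s)$ by the Glauber Dirichlet form via the logarithmic Sobolev inequality for independent spin flips (whose constant is of order $\min_i V_i\ge\inf_{(0,\varepsilon)}V$), finishing with the bound $\int_0^t D^N_{\rm G}(s)\,ds\le C$ from Lemma \ref{lem:dirichlet_form_estimate}. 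Crucially, that argument uses only the \emph{lower bound} $\inf_{(0,\varepsilon)}V$ and is completely insensitive to oscillations of $V$. Your idea of testing with $a_i=g_i/V_i$ so that $NL_{\rm G}H=\sum_i g_i(\rho_i-\eta_i)$ is attractive and several of your estimates (the $O(1/v_y)$ bound on the increment of $H$, the Glauber part of the quadratic variation, the taper, the edge bond $(0,1)$) are sound.

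However, there is a genuine gap in the treatment of the exclusion terms. Both the viscous current estimate and the exclusion part of $\langle\mathcal M\rangle$ rest on the claim that the bulk bonds satisfy $a_i-a_{i+1}=O(1/N)$, and the asymmetric drift rests on $y^{-1}\,\mathrm{TV}_{(0,y)}(1/V)\to0$. Neither follows from the standing hypotheses $V\in\mathcal C^1((0,1))$ and \eqref{v_condition_3}: the derivative of $1/V$ is $-V'/V^2$, and the hypotheses impose no control on $V'$ near the endpoints. For example, $V(x)=x^{-2}\bigl(2+\sin(x^{-2})\bigr)$ satisfies \eqref{v_condition_3} (indeed $y\inf_{(0,y)}V\ge y^{-1}$) and is $\mathcal C^1$, yet $(1/V)'(x)$ is of order $x^{-1}$ on a set of positive density, so $\mathrm{TV}_{(0,y)}(1/V)=+\infty$ and $\sum_{i\le yN}(a_i-a_{i+1})^2$ is of order $1$ rather than $y/N$; your flux bound then yields $N\sqrt{\sigma_N t}$ before division by $yN$, which diverges. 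So as written the proof only covers $V$ whose reciprocal has suitably small variation near the boundary (e.g.\ eventually monotone $V$, for which $\mathrm{TV}_{(0,y)}(1/V)\le1/\inf_{(0,y)}V$ and everything you wrote goes through). To prove the proposition in the stated generality you need an argument that, like the paper's entropy/log-Sobolev one, depends on $V$ only through $\inf_{(0,y)}V$.
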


\begin{example*}
Fix some $\gamma>0$, $\rho_0$ and $\rho_1\in(0,1)$.
By taking
\[V(x)=\frac1{x^\gamma}+\frac1{(1-x)^\gamma}, \quad \rho(x)=\frac{\rho_0(1-x)^\gamma+\rho_1x^\gamma}{(1-x)^\gamma+x^\gamma},\]
we obtain the source term given by
\[G(x,u)=\frac{u-\rho_0}{x^\gamma} + \frac{u-\rho_1}{(1-x)^\gamma}.\]
In this case, the dynamics of $L_{\rm G}$ can be interpreted as two infinitely extended reservoirs \cite{BCGS23, bernardin20192low, BGJ21} placed respectively at the sites $\{-1,-2,\ldots,\}$ and $\{N+1,N+2,\ldots\}$.
When $\gamma\ge1$, $V$ satisfies \eqref{v_condition_1}, so the hydrodynamic limit can apply.
\end{example*}

\subsection{Discussion on the integrable case}
\label{sec:discussion}
When $V \in L^1 ((0,1))$, we expect that Theorem \ref{thm:hydrodynamics} holds with the entropy solution in Definition \ref{def:ent-sol-1}.
Since the dynamics of $L_{\rm G}$ is no more dominating at the boundaries, the boundary data $(\alpha,\beta)$ may depend on $c_{\rm in}^\pm$, $c_{\rm out}^\pm$ as well as $V$, $\rho$.
In particular when $V=0$, we expect that $\alpha$, $\beta$ are determined by
\[J(\alpha) = c_{\rm in}^-(1-\alpha) - c_{\rm out}^-\alpha, \qquad J(\beta) = c_{\rm out}^-\beta - c_{\rm in}^+(1-\beta).\]
This is proved in \cite{Baha12} for microscopic dynamics \emph{without extra symmetric regularization} and the special choice of reservoirs such that
\[c_{\rm in}^-=p\alpha, \quad c_{\rm out}^-=(1-p)(1-\alpha), \quad c_{\rm in}^+=(1-p)\beta, \quad c_{\rm out}^+=p(1-\beta).\]
We underline that the problem remains open for general reservoirs even when $V=0$.

The situation is easier when further speed-up is imposed on the boundary reservoirs.
Let $V \in L^1((0,1))$ satisfy \eqref{eq00} and assume the compatibility conditions
\[\rho(0) = \frac{c_{\rm in}^-}{c_{\rm in}^- + c_{\rm out}^-}, \qquad \rho(1) = \frac{c_{\rm in}^+}{c_{\rm in}^+ + c_{\rm out}^+}.\]
Fix $a>0$ and consider the process generated by $L'_N := N(L_{\rm exc} + L_{\rm G} + N^aL_{\rm bd})$.
In this case, the hydrodynamic equation is still given by \eqref{hyperbolic_pde} and \eqref{eq:bl-bd}, but the solution should be understood in the sense of Definition \ref{def:ent-sol-1}.
This can be proved with the argument in \cite{xu2021hydrodynamic}.
%

\section{Outline of the proof}

Hereafter, we fix an arbitrary $T>0$ and restrict the argument within the finite time horizon $[0,T]$.
Let $\mathcal M_+([0,1])$ be the space of finite, positive Radon measures on $[0,1]$, endowed with the weak topology.
Define the empirical distribution $\pi^N=\pi^N(t,dx)$ as
\begin{equation}\label{eq:empirical}
  \pi^N(t,dx) := \frac1N\sum_{i=0}^N \eta_i(t)\delta_{\frac iN}(dx), \quad \forall\,t\in[0,T],
\end{equation}
where $\delta_u(dx)$ stands for the Dirac measure at $u$.
Denote by $\mathcal D=\mathcal D([0,T];\mathcal M_+([0,1]))$ the space of c\`adl\`ag paths on $\mathcal M_+([0,1])$ endowed with the Skorokhod topology.
To prove Theorem \ref{thm:hydrodynamics}, it suffices to show that the distribution of $\pi^N$ on $\mathcal D$ converges weakly as $N\to\infty$ and the limit is concentrated on the single path $\pi(t,dx)=u(t,x)dx$.
However, to formulate the evolution equation \eqref{hyperbolic_pde} of $u$ we need a type of convergence that also applies to nonlinear functions.
The idea is to introduce the \emph{Young measure} corresponding to the mesoscopic block average, cf.~\cite{Fritz04, fritz2004derivation, rezakhanlou91} and \cite[Chapter 8]{klscaling}.

Let $\Sigma_T=(0,T)\times(0,1)$.
Recall that a Young measure on $\Sigma_T$ is a measurable map $\nu:\Sigma_T\to\mathcal P(\R)$, where $\mathcal P(\R)$ is the space of probability measures on $\R$ endowed with the topology defined by the weak convergence.
Denote by $\mathcal{Y}=\mathcal Y(\Sigma_T)$  the set of all Young measures on $\Sigma_T$, and by $\nu=\{\nu_{t,x};(t,x)\in\Sigma_T\}$ the element in $\mathcal Y$.
A sequence $\{\nu^n;n\ge1\}$ of Young measures is said to converge to $\nu\in\mathcal Y$ if for any bounded and continuous function $f$ on $\Sigma_T\times\R$,
\begin{equation}\label{eq:topology-young}
	\lim_{n\to\infty} \iint_{\Sigma_T} dxdt \int_\R f(t,x,\lambda)\nu_{t,x}^n(d\lambda) = \iint_{\Sigma_T} dxdt \int_\R f(t,x,\lambda)\nu_{t,x}(d\lambda).
\end{equation}
Any measurable function $u$ on $\Sigma_T$ is naturally viewed as a Young measure:
\begin{equation}\label{eq:delta-young}
  \nu_{t,x}(d\lambda):=\delta_{u(t,x)}(d\lambda), \quad \forall\,(t,x)\in\Sigma_T.
\end{equation}
Denote by $\mathcal Y_d$ the set of all $\nu\in\mathcal Y$ of this kind.

Hereafter, we fix some mesoscopic scale $K = K (N)$ such that
\begin{equation}\label{eq:mesoscopic}
   K \ll \sigma_N, \quad  N \sigma_N \ll K^3, \quad \sigma_N^2 \ll NK.
\end{equation}
The existence of such $K$ is guaranteed by \eqref{sigma_condition}.
For $\eta \in \Omega_N$ and $i=K$, ..., $N-K$, define the \emph{smoothly weighted block average} as
\begin{equation}\label{eq:block-average}
	\hat{\eta}_{i,K} := \sum_{j=-K+1}^{K-1} w_{j} \eta_{i-j}, \quad w_j := \frac{K-|j|}{K^2}.
\end{equation}
Consider the space-time empirical density
\begin{equation}\label{eq:empirical-young}
  u^N(t,x) := \sum_{i=K}^{N-K} \hat\eta_{i,K}(t)\chi_{N,i}(x), \quad \forall\,(t,x)\in\Sigma_T,
\end{equation}
where $\chi_{N,i} (\cdot)$ is the indicator function of the interval $[\tfrac{i}{N} - \tfrac{1}{2N}, \tfrac{i}{N} + \tfrac{1}{2N})$. 

\begin{lemma}[Tightness]\label{lem:tight}
Let $\Q_N$ be the distribution of $(\pi^N,\nu^N)$, where $\pi^N$ is defined in \eqref{eq:empirical} and $\nu^N$ is the Young measure corresponding to $u^N$ in \eqref{eq:empirical-young} in the sense of \eqref{eq:delta-young}.
Then, the sequence of $\mathbb Q_N$ is tight with respect to the product topology on $\mathcal D\times\mathcal Y$.
\end{lemma}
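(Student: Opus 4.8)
The plan is to establish tightness of the two marginals separately, since on a product space tightness of the joint law is equivalent to tightness of each coordinate. For $\nu^N$ essentially no work is needed: because $\hat\eta_{i,K}\in[0,1]$, every realization of $\nu^N$ takes values in the fixed compact set $\mathcal P([0,1])$, and the collection of all Young measures on the bounded domain $\Sigma_T$ with values in $\mathcal P([0,1])$ is itself compact for the topology \eqref{eq:topology-young}. Indeed, identifying $\nu$ with the measure $dt\,dx\,\nu_{t,x}(d\lambda)$ on $\overline{\Sigma_T}\times[0,1]$, which has total mass $T$ and first marginal equal to Lebesgue measure, realizes this collection as a weak-$*$ closed subset of a weak-$*$ compact ball of measures on a compact set, hence as a compact metrizable space. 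Consequently the laws of $\nu^N$ are automatically tight, and the substance of the lemma lies in the tightness of $\pi^N$.

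For $\pi^N$, note that $\pi^N(t)$ always lies in $\{\mu\in\mathcal M_+([0,1]):\mu([0,1])\le 2\}$, which is compact metrizable. By the standard criterion it then suffices to prove that for every $\psi$ in a dense subset of $C([0,1])$ --- I would take $\psi\in C^2([0,1])$ --- the real process $t\mapsto\frac1N\sum_i\eta_i(t)\psi(i/N)$ is tight in $\mathcal D([0,T];\R)$, for which I would verify the Aldous criterion through the Dynkin martingale decomposition. The exchange part of the drift is $O(1)$ after a summation by parts, the viscous piece of order $\sigma_N$ being killed by an extra discrete derivative together with $N^{-1}\sigma_N\to 0$ from \eqref{sigma_condition}. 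The obstacle is entirely the Glauber part, which equals $\frac1N\sum_i V_i(\rho_i-\eta_i)\psi(i/N)$: both its size and its contribution $N^{-2}\sum_i V_i$ to the quadratic variation diverge because $V$ is non-integrable. Crucially, the drift and the martingale of this fast boundary piece are each large but nearly cancel, so they cannot be bounded separately and the naive Aldous--Rebolledo splitting fails.

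I would circumvent this by localizing away from the boundary. Fix a small $\delta>0$ and a cutoff $\phi_\delta\in C^\infty([0,1])$ with $\phi_\delta\equiv 1$ on $[\delta,1-\delta]$ and $\phi_\delta\equiv 0$ near $\{0,1\}$, and decompose $\psi=\psi\phi_\delta+\psi(1-\phi_\delta)$. For the boundary part I would use no cancellation at all: since $|\eta_i|\le 1$, the value $\big|\frac1N\sum_i\eta_i(t)\psi(i/N)(1-\phi_\delta(i/N))\big|\le \|\psi\|_\infty\,\frac1N\#\{i:\phi_\delta(i/N)<1\}\le C\delta\|\psi\|_\infty$ uniformly in $t$ and $N$, so the oscillation of this part over any time interval is at most $2C\delta\|\psi\|_\infty$, which is made smaller than $\varepsilon/2$ by choosing $\delta$ small. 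For the bulk part the test function $\psi\phi_\delta$ is supported in $[\delta,1-\delta]$, where $V\le V_\delta:=\sup_{[\delta,1-\delta]}V<\infty$; now the Glauber drift is bounded by $V_\delta\|\psi\|_\infty$ and its quadratic-variation contribution by $N^{-1}V_\delta\|\psi\|_\infty^2\to 0$, while the exchange drift and the exchange quadratic variation are controlled as usual by \eqref{sigma_condition}. The Aldous condition for the bulk part then follows from the standard first- and second-moment estimates on the drift integral and on the martingale, giving a bound below $\varepsilon/2$ for small time increments. Combining the two parts yields the Aldous estimate for $\frac1N\sum_i\eta_i\psi(i/N)$, hence tightness of $\pi^N$, and together with the first two paragraphs the tightness of $\mathbb Q_N$.

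The only nontrivial point, and the main obstacle, is thus the non-integrable Glauber drift at the boundary. The observation that makes the proof go through, without ever tracking the fast boundary dynamics, is that the conserved mass sitting in a $\delta$-neighbourhood of each endpoint is at most $O(\delta)$ uniformly in time, so its total variation against any bounded test function is harmless; only the bulk, where $V$ is bounded, requires a quantitative Aldous estimate.
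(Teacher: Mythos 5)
Your proof is correct and follows essentially the same route as the paper: the compactness of the set of $[0,1]$-valued Young measures disposes of $\nu^N$, and for $\pi^N$ the non-integrable Glauber drift is neutralized by splitting off an $O(\delta)$ boundary piece controlled crudely by $|\eta_i|\le 1$ and applying the martingale decomposition only to a test function supported where $V$ is bounded. The paper implements the same idea by approximating $\psi$ in $L^1$ with a compactly supported $\psi_*$ rather than multiplying by a cutoff $\phi_\delta$, and uses the Kipnis--Landim modulus-of-continuity criterion in place of Aldous, but these are cosmetic differences.
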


Let $\Q$ be a limit point of $\Q_N$.
With some abuse of notations, we denote the subsequence converging to $\Q$ still by $\Q_N$.
Below we characterize $\Q$ by three propositions.

\begin{proposition}\label{prop:q}
The following holds for $\Q$-almost every $(\pi,\nu)$.
\begin{enumerate}
\item[$(\romannumeral1)$] $\pi(t,dx)=\varpi(t,x)dx$ for every $t\in[0,T]$ with some $\varpi(t,\cdot) \in L^1((0,1))$, and $t\mapsto\varpi(t,\cdot)$ is a continuous map with respect to the weak topology of $L^1$.
\item[$(\romannumeral2)$] $\nu_{t,x}([0,1])=1$ for almost all $(t,x)\in\Sigma_T$.
\item[$(\romannumeral3)$] $\varpi(t,x)=\int \lambda\nu_{t,x}(d\lambda)$ for almost all $(t,x)\in\Sigma_T$.
\end{enumerate}
\end{proposition}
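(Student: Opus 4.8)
The plan is to establish the three claims in the order $(\romannumeral2)$, $(\romannumeral1)$, $(\romannumeral3)$, since the concentration property feeds into the identification of the density. For $(\romannumeral2)$, I would first note that the weights in \eqref{eq:block-average} are nonnegative and sum to one, $\sum_{j=-K+1}^{K-1} w_j = K^{-2}\sum_{j=-K+1}^{K-1}(K-|j|) = 1$, so each $\hat\eta_{i,K}$ is a convex combination of occupation variables and thus lies in $[0,1]$. Hence $u^N(t,x)\in[0,1]$ everywhere and $\nu^N_{t,x}=\delta_{u^N(t,x)}$ is supported on $[0,1]$. Testing \eqref{eq:topology-young} against $f(t,x,\lambda)=\varphi(t,x)\psi(\lambda)$, with $\varphi\ge0$ and $\psi\ge0$ continuous, bounded and vanishing on $[0,1]$, the integral against $\nu^N$ is identically zero; passing to the limit forces $\int\psi\,d\nu_{t,x}=0$ for almost every $(t,x)$. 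Letting $\psi$ increase to the indicator of $\R\setminus[0,1]$ gives $\nu_{t,x}(\R\setminus[0,1])=0$, and since no mass escapes to infinity we conclude $\nu_{t,x}([0,1])=1$ a.e.

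For $(\romannumeral1)$, the absolute continuity follows from the deterministic bound $\pi^N(t,I)\le |I|+N^{-1}$, valid for every interval $I$ and every $t$; by the portmanteau theorem it passes to $\pi(t,I)\le|I|$ at continuity points, so $\pi(t)$ is dominated by Lebesgue measure and has a density $\varpi(t,\cdot)$ valued in $[0,1]$, in particular in $L^1$. The continuity in time is the more delicate point. Each elementary transition changes $\int_0^1\phi\,d\pi^N(t)=N^{-1}\sum_i\eta_i(t)\phi(i/N)$ by at most $2\|\phi\|_\infty N^{-1}$, so the maximal jump of $t\mapsto\int\phi\,d\pi^N(t)$ over $[0,T]$ is $O(N^{-1})$ uniformly, despite the hyperbolic acceleration in \eqref{eq:generator}. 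Since a Skorokhod limit of c\`adl\`ag paths with vanishing maximal jump is continuous, $t\mapsto\int\phi\,d\pi(t)$ is continuous for $\Q$-almost every path, and ranging $\phi$ over a countable dense subset of $\mathcal C([0,1])$ yields weak-$*$ continuity of $t\mapsto\pi(t)$. Because the densities are uniformly bounded in $L^\infty$, this upgrades to weak-$L^1$ continuity: for $t_n\to t$ the family $\{\varpi(t_n,\cdot)\}$ is weakly precompact in $L^2$, its only weak limit point is identified by testing against the dense class $\mathcal C([0,1])$, and the $L^\infty$ bound permits testing against all of $L^\infty\subset L^2$.

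For $(\romannumeral3)$, I would compare the empirical measure with the block average. Reindexing \eqref{eq:block-average} and using $\int\chi_{N,i}\phi(t,\cdot)\,dx=N^{-1}\phi(t,i/N)+O(N^{-2})$, one gets, for $\phi\in\mathcal C([0,T]\times[0,1])$,
\[
\iint_{\Sigma_T} u^N\phi\,dxdt=\int_0^T\frac1N\sum_k\eta_k(t)\sum_j w_j\phi\Big(t,\tfrac{k+j}{N}\Big)\,dt+o(1).
\]
As $|j|<K\ll N$ by \eqref{eq:mesoscopic}, uniform continuity of $\phi$ shows the inner sum differs from $\phi(t,k/N)$ by $o(1)$ uniformly. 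Hence the functionals $\Phi_1(\pi):=\iint_{\Sigma_T}\phi\,d\pi\,dt$ and $\Phi_2(\nu):=\iint_{\Sigma_T}\int\lambda\,\phi\,\nu_{t,x}(d\lambda)\,dxdt$ satisfy $(\Phi_1-\Phi_2)(\pi^N,\nu^N)\to0$ deterministically. Both are continuous on $\mathcal D\times\mathcal Y$: $\Phi_1$ because the limit path is continuous by $(\romannumeral1)$, and $\Phi_2$ by \eqref{eq:topology-young} after truncating the unbounded factor $\lambda$, which is legitimate since $\nu$ is supported on $[0,1]$ by $(\romannumeral2)$. Passing to the limit gives $\Phi_1(\pi)=\Phi_2(\nu)$ $\Q$-almost surely for each $\phi$ in a countable dense family, which reads $\iint\varpi\,\phi\,dxdt=\iint(\int\lambda\,\nu_{t,x}d\lambda)\phi\,dxdt$ and yields $(\romannumeral3)$.

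The main obstacle I anticipate is the temporal continuity in $(\romannumeral1)$: one must verify that the maximal jump vanishes uniformly under the acceleration by $N$ in \eqref{eq:generator}, so that the limiting path truly has no jumps, and then carry out the passage from weak-$*$ to weak-$L^1$ continuity using the uniform $L^\infty$ bound. The second delicate point is the joint passage to the limit in $(\romannumeral3)$, in particular justifying the evaluation of the Young measure against the unbounded observable $\lambda$ via the concentration established in $(\romannumeral2)$.
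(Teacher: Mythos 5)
Your proposal is correct and follows essentially the same route as the paper: $(\romannumeral2)$ from the fact that $u^N\in[0,1]$ so the limiting Young measure stays supported on $[0,1]$; $(\romannumeral1)$ from the deterministic mass bound plus the vanishing time-oscillation (you use the $O(N^{-1})$ maximal jump, the paper invokes the uniform modulus estimate \eqref{eq:tight-2} — both standard and equivalent here); and $(\romannumeral3)$ from the deterministic $O(K/N)$ comparison between $\iint u^N\varphi\,dxdt$ and $\int_0^T\langle\pi^N,\varphi(t,\cdot)\rangle dt$. The only substantive addition is that you spell out the upgrade from weak-$*$ continuity of $t\mapsto\pi(t)$ to weak-$L^1$ continuity of $t\mapsto\varpi(t,\cdot)$ via the uniform $L^\infty$ bound, a step the paper leaves implicit.
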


\begin{proposition}\label{Q_dirac_property}
$\Q(\mathcal D \times \mathcal Y_d)=1$, where $\mathcal Y_d$ is the set of delta-Young measures in \eqref{eq:delta-young}.
\end{proposition}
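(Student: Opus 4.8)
The plan is to run the compensated compactness scheme of \cite{Fritz04, fritz2004derivation}: produce $H^{-1}$-compactness of the microscopic entropy production, invoke the div--curl lemma to obtain Tartar's commutation relation for the limiting Young measure $\nu$, and finally use the genuine nonlinearity of $J$ to force $\nu_{t,x}$ to be a Dirac mass. All of the analysis is local in the interior of $\Sigma_T$. Since $(0,T)\times(0,1)$ is exhausted by compact rectangles and ``being a Dirac measure for almost every $(t,x)$'' is a local property, establishing concentration on every such rectangle yields $\Q(\mathcal D\times\mathcal Y_d)=1$.

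First I would fix a Lax entropy--flux pair $(f,q)$ and study the distribution $X_f^N := \partial_t f(u^N)+\partial_x q(u^N)$ on a compact rectangle $\Lambda\Subset\Sigma_T$. Applying \Ito's formula to $f(u^N)$ along the process generated by $L_N$ and rearranging, $X_f^N$ decomposes, tested against $\mathcal C_c^\infty(\Lambda)$, as the sum of: (a) a martingale term, whose quadratic variation is $o(1)$ under the scaling windows \eqref{eq:mesoscopic} and \eqref{sigma_condition}, hence tends to $0$ in $L^2(\P)$ and is precompact in $H^{-1}_{\mathrm{loc}}$; (b) the viscous entropy dissipation produced by the $\tfrac{\sigma_N}{2}$-symmetric part of $L_{\rm exc}$, which is sign-definite because $f''\ge0$ and is therefore bounded in the space $\mathcal M(\Lambda)$ of measures uniformly in $N$; (c) the source contribution $-f'(u^N)V(x)(u^N-\rho)$, which is bounded in $L^\infty(\Lambda)\subset\mathcal M(\Lambda)$ since $V$ is bounded on $\Lambda$ while $f'$, $u^N$ and $\rho$ are bounded; and (d) error terms generated by replacing the block averages $\hat\eta_{i,K}$ by local functions and by Taylor expansion, which vanish in $H^{-1}_{\mathrm{loc}}$ under \eqref{eq:mesoscopic}. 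Because $f(u^N)$ and $q(u^N)$ are uniformly bounded, $X_f^N$ lies in a bounded subset of $W^{-1,r}_{\mathrm{loc}}(\Lambda)$ for every $r<\infty$; combined with the splitting ``compact in $H^{-1}$ plus bounded in $\mathcal M$'', Murat's lemma gives that $X_f^N$ is precompact in $H^{-1}_{\mathrm{loc}}(\Lambda)$, $\Q$-almost surely after passing to an a.s.\ convergent realization via the Skorokhod representation.

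Next I would apply the div--curl lemma to the pairs of fields $(f_1(u^N),q_1(u^N))$ and $(q_2(u^N),-f_2(u^N))$ attached to two Lax pairs $(f_1,q_1)$ and $(f_2,q_2)$. Their distributional divergence and rotation, respectively, coincide with $X_{f_1}^N$ and $-X_{f_2}^N$, both precompact in $H^{-1}_{\mathrm{loc}}$ by the previous step, so the lemma yields the weak-$*$ convergence of the inner product $f_1(u^N)q_2(u^N)-q_1(u^N)f_2(u^N)$ to the product of the separate weak limits, that is,
\[
  \big\langle \nu_{t,x},\, f_1 q_2 - q_1 f_2 \big\rangle
  = \big\langle \nu_{t,x}, f_1\big\rangle \big\langle \nu_{t,x}, q_2\big\rangle
  - \big\langle \nu_{t,x}, q_1\big\rangle \big\langle \nu_{t,x}, f_2\big\rangle
\]
for almost every $(t,x)$, $\Q$-almost surely. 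Here I use that, along the a.s.\ convergent realization, $g(u^N)\rightharpoonup\langle\nu_{\cdot,\cdot},g\rangle$ weakly-$*$ in $L^\infty$ for every bounded continuous $g$, which simultaneously identifies the weak limits of the $f_i(u^N),q_i(u^N)$ and the weak limit of their product.

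Finally, I would reduce the measure to a point mass. By Proposition \ref{prop:q}, $\nu_{t,x}$ is a probability measure supported in $[0,1]$ with mean $\varpi(t,x)$. Since $J''(u)=-2(2p-1)\neq0$ because $p>\tfrac12$, the flux $J$ is genuinely nonlinear, and the classical Tartar--DiPerna reduction for scalar conservation laws applies: inserting a suitable family of Lax pairs into the commutation relation above forces the variance of $\nu_{t,x}$ to vanish, so that $\nu_{t,x}=\delta_{\varpi(t,x)}$ for almost every $(t,x)$, which gives the concentration of $\nu$ on $\mathcal Y_d$. I expect the main obstacle to be the first step, the $H^{-1}_{\mathrm{loc}}$-compactness: one must at once extract the sign-definite dissipation, show that the martingale and the block-averaging and Taylor errors are negligible, and verify that the scaling window \eqref{eq:mesoscopic} together with the strong viscosity $\sigma_N\to\infty$ keeps all of these estimates mutually consistent. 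This is precisely where the additional symmetric regularization $\tfrac{\sigma_N}{2}$ built into $L_{\rm exc}$ is indispensable.
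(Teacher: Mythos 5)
Your proposal is correct and follows essentially the same route as the paper: the paper proves exactly your step one as Proposition \ref{prop:compensated compactness} (the decomposition $X(\nu^N,\varphi)=Y_N+Z_N$ into an $H^{-1}$-vanishing part and a measure-bounded part, localized by a cutoff $\phi$ away from the boundary where $V$ blows up, with the martingale, source, block-replacement and Taylor errors handled via the Dirichlet-form and one-block/$H^1$ estimates of Lemmas \ref{lem:dirichlet_form_estimate} and \ref{lem:block_estimate}), and then invokes the cited results of \cite{Fritz04} for the Murat lemma, div--curl argument and Tartar--DiPerna reduction under genuine nonlinearity that you spell out. The only cosmetic difference is that you obtain the measure bound on the viscous dissipation from its sign-definiteness, whereas the paper derives it quantitatively from the $H^1$ estimate \eqref{eq:h1}; both are valid.
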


To state the last proposition, define the \emph{entropy production}
\begin{equation}\label{eq:ent-prod}
	X^{(f,q)} (\nu,\varphi) := - \iint_{\Sigma_T} dxdt \left[ \partial_t\varphi\int_\R f\,d\nu_{t,x} + \partial_x\varphi\int_\R q\,d\nu_{t,x} \right],
\end{equation}
for $\nu\in\mathcal Y$, $\varphi \in \mathcal{C}^1 (\R^2)$ and Lax entropy--flux pair $(f,q)$.

\begin{proposition}\label{prop:q-mve}
It holds $\Q$-almost surely that
\begin{equation}\label{eq:micro-entropy-inequality}
  X^{(f,q)}(\nu,\varphi) + \iint_{\Sigma_T} dxdt \left[ \varphi\int_\R f'(\lambda)G(x,\lambda)\nu_{t,x}(d\lambda) \right] \le \int_0^1 f(u_0)\varphi(0,x)dx,
\end{equation}
for any Lax entropy--flux pair $(f,q)$ and any $\varphi\in\mathcal C_c^2([0,T)\times(0,1))$ such that $\varphi\ge0$, where $G(x,\lambda)=V(x)[\lambda-\rho(x)]$.
\end{proposition}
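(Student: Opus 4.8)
The plan is to apply Dynkin's formula to the time-dependent entropy functional
\[
H^N_t := \frac1N\sum_{i=K}^{N-K} f\big(\hat\eta_{i,K}(t)\big)\,\varphi\big(t,\tfrac iN\big),
\]
and to identify the limit of each resulting term through the Young-measure convergence \eqref{eq:topology-young}. Since $\varphi\in\mathcal C_c^2([0,T)\times(0,1))$ we have $H^N_T=0$, so, absorbing the factor $N$ in $L_N$ against the prefactor $1/N$,
\[
-H^N_0 = \int_0^T\Big[\tfrac1N\textstyle\sum_i f(\hat\eta_{i,K})\,\partial_t\varphi(s,\tfrac iN) + \sum_i\varphi(s,\tfrac iN)\,(L_{\rm exc}+L_{\rm G}+L_{\rm bd})f(\hat\eta_{i,K})\Big]ds + M^N_T.
\]
First I would check that the martingale is negligible: a computation of its predictable quadratic variation gives a bound of order $\sigma_N/K^2$, which vanishes under \eqref{eq:mesoscopic} and \eqref{sigma_condition}, so $M^N_T\to0$ in $L^2$. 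Using \eqref{eq:initial} and a one-block estimate to replace $f(\hat\eta_{i,K}(0))$ by $f(u_0)$ one gets $H^N_0\to\int_0^1 f(u_0)\varphi(0,x)\,dx$. It then remains to treat the three generator contributions, using repeatedly that $\nu^N=\delta_{u^N}$, so that each space average $\tfrac1N\sum_i g(\hat\eta_{i,K})\psi(\tfrac iN)$ equals $\iint_{\Sigma_T}\psi\int g\,d\nu^N$ and converges along the fixed subsequence by \eqref{eq:topology-young}; the support of $\nu$ in $[0,1]$ granted by Proposition \ref{prop:q} makes the relevant integrands effectively bounded and continuous.

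The boundary part vanishes identically: flipping $\eta_0$ or $\eta_N$ does not alter any block average $\hat\eta_{i,K}$ with $K\le i\le N-K$, since these depend only on $\eta_1,\dots,\eta_{N-1}$, whence $\sum_i\varphi\,L_{\rm bd}f(\hat\eta_{i,K})\equiv0$. For the Glauber part, a flip at site $j$ shifts the block average by $w_{i-j}(1-2\eta_j)$, so a first-order Taylor expansion gives
\[
\sum_i\varphi\,L_{\rm G}f(\hat\eta_{i,K}) = \frac1N\sum_i\varphi(\tfrac iN)\,f'(\hat\eta_{i,K})\sum_j w_{i-j}\,c_{j,{\rm G}}(\eta)(1-2\eta_j) + O(K^{-1}),
\]
the remainder being the quadratic Taylor term $\tfrac1{2N}\sum_{i,j}\varphi\,c_{j,{\rm G}}f''(w_{i-j})^2=O(K^{-1})$ because $V$ is bounded on $\mathrm{supp}\,\varphi$. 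The identity $c_{j,{\rm G}}(\eta)(1-2\eta_j)=-V_j(\eta_j-\rho_j)=-G(\tfrac jN,\eta_j)$, together with $\sum_j w_{i-j}=1$ and the uniform continuity of $V,\rho$ on the compact $\mathrm{supp}\,\varphi$, reduces the leading term to $-\tfrac1N\sum_i\varphi\,f'(\hat\eta_{i,K})G(\tfrac iN,\hat\eta_{i,K})+o(1)$, which converges to $-\iint_{\Sigma_T}\varphi\int f'(\lambda)G(x,\lambda)\,d\nu_{t,x}$.

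The crux is the exclusion-plus-viscosity term. Writing $\Delta_j:=(w_{i-j}-w_{i-j-1})(\eta_{j+1}-\eta_j)$ for the change of $\hat\eta_{i,K}$ under the swap across bond $(j,j+1)$, I would expand $f(\hat\eta^{j,j+1}_{i,K})-f(\hat\eta_{i,K})=f'(\hat\eta_{i,K})\Delta_j+\tfrac12 f''(\hat\eta_{i,K})\Delta_j^2+\cdots$. The asymmetric rate pairs with the linear term through $c_{j,j+1}(\eta)(\eta_{j+1}-\eta_j)=-[p\eta_j(1-\eta_{j+1})-(1-p)\eta_{j+1}(1-\eta_j)]$, whose bracket is the instantaneous current; after a discrete summation by parts in $i$ that moves the weight difference onto $\varphi\,f'(\hat\eta_{\cdot,K})$, a one-block replacement of the locally averaged current by $J(\hat\eta_{i,K})$, and the chain rule $q'=J'f'$, this assembles the macroscopic entropy flux and converges to $\iint_{\Sigma_T}\partial_x\varphi\int q\,d\nu$. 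The symmetric part carrying the viscosity $\sigma_N/2$, combined with the quadratic Taylor term $\tfrac12 f''\Delta_j^2$, produces through the convexity $f''\ge0$ a remainder of definite sign representing microscopic entropy dissipation; I would discard it to obtain the \emph{one-sided} inequality, the scaling \eqref{eq:mesoscopic} ensuring that the truncation errors and the non-locality of the weights $w_j$ contribute only $o(1)$.

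I expect the decisive difficulty to be precisely this one-block replacement of the microscopic current by $J(\hat\eta_{i,K})$ and the clean isolation of the dissipation term with the correct sign: both hinge on the strongly accelerated symmetric noise $\sigma_N\to\infty$, which forces local equilibration at the mesoscopic scale $K$, and on the smoothness of the weights $w_j$ in \eqref{eq:block-average}, which keeps the discrete gradients of $\hat\eta_{\cdot,K}$ of size $O(K^{-1})$ so that the Taylor remainders are controllable under \eqref{eq:mesoscopic}; the blow-up of $V$ at the endpoints never interferes because $\varphi$ is compactly supported in $(0,1)$. Collecting the four contributions, sending $N\to\infty$, and recalling $X^{(f,q)}$ from \eqref{eq:ent-prod}, one obtains that the left-hand side of \eqref{eq:micro-entropy-inequality} is at most $\int_0^1 f(u_0)\varphi(0,x)\,dx$ up to an error vanishing in $\P_{\mu_N}$-probability; since the entropy-production functional is continuous and bounded for the topology \eqref{eq:topology-young}, a portmanteau argument transfers this bound from $\Q_N$ to $\Q$, and a countable dense family of pairs $(f,q)$ and test functions $\varphi$ yields the inequality $\Q$-almost surely.
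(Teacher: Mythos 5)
Your proposal is correct and follows essentially the same route as the paper: Dynkin's formula applied to $\tfrac1N\sum_i f(\hat\eta_{i,K})\varphi$, vanishing of the martingale, boundary, and Taylor-remainder terms under \eqref{eq:mesoscopic}, the one-block replacement of the microscopic current by $J(\hat\eta_{i,K})$ together with $q'=J'f'$ to assemble the entropy flux, identification of the Glauber term with the source via the compact support of $\varphi$, and discarding the sign-definite viscous dissipation — this is precisely the decomposition of Lemma \ref{lem:decomposition} combined with the estimates of Section \ref{sec:compensated compactness}. The one bookkeeping point to fix is that the term of definite (discardable) sign is the \emph{first-order} viscosity contribution, which after summation by parts becomes $-\sigma_N\int_0^T\sum_i\bar\varphi_i\,\nabla\hat\eta_i\,\nabla f'(\hat\eta_i)\,dt\le 0$ by convexity (the paper's $\mathcal S_{N,1}$), whereas the quadratic Taylor remainder $\tfrac12 f''\Delta_j^2$ is nonnegative — the wrong sign for the upper bound on $X$ — and must instead be shown negligible, of order $N\sigma_N K^{-3}$, using \eqref{eq:mesoscopic}.
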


\begin{remark}
Similarly to Remark \ref{rem:energy_bound}, one can obtain a measure-valued energy bound: it holds $\Q$-almost surely that
\begin{equation}\label{eq:micro-energy-bd}
  \iint_{\Sigma_T} dxdt \left[ V(x)\int_\R \big[\lambda-\rho(x)\big]^2d\nu_{t,x} \right] < \infty.
\end{equation}
This can be derived directly from the microscopic dynamics, see Section \ref{sec:mve}.
\end{remark}

\begin{remark}
The arguments we used to prove Lemma \ref{lem:tight} and Proposition \ref{prop:q}, \ref{Q_dirac_property} and \ref{prop:q-mve} do apply to all $V \in \mathcal C^1((0,1))$, bounded or unbounded. However, only in the non-integrable case are they sufficient to identify the limit equation.
\end{remark}

We organize the remaining contents as follows.
Lemma \ref{lem:tight} and Proposition \ref{prop:q} are proved in Section \ref{sec:tight}.
Proposition \ref{Q_dirac_property} is proved in Section \ref{sec:compensated compactness} and Proposition \ref{prop:q-mve} is proved in Section \ref{subsec:micro-ent-ineq}.
With these results, the proofs of Theorem \ref{thm:hydrodynamics} and Corollary \ref{cor:regularity} are straightforward and are stated right below.
The direct proofs of Proposition \ref{prop:boundary_continuity} and \eqref{eq:micro-energy-bd} using the relative entropy method \cite{yau1991relative} are stated in Section \ref{subsec:micro-energy-bd} and \ref{subsec:micro-bd-cont}.

\begin{proof}[Proof of Theorem \ref{thm:hydrodynamics} and Corollary \ref{cor:regularity}]
Recall that $\Q$ is a probability measure on $\mathcal D\times\mathcal Y$.
In view of Proposition \ref{prop:q} $(i)$ and \ref{Q_dirac_property}, $\pi(t)=\varpi(t,x)dx$ and $\nu_{t,x}=\delta_{u(t,x)}$, $\Q$-almost surely.
Proposition \ref{prop:q} $(iii)$ then yields that $\Q$ is concentrated on the trajectories such that $\varpi=u$.

To prove Theorem \ref{thm:hydrodynamics}, we need to show that $u$, and hence $\varpi$, is the entropy solution to \eqref{hyperbolic_pde} and \eqref{eq:bl-bd}.
By Proposition \ref{prop:q} $(ii)$, $u(t,x)\in[0,1]$ so that $u \in L^\infty(\Sigma_T)$.
Furthermore, by substituting $\nu_{t,x}=\delta_{u(t,x)}(d\lambda)$ in Proposition \ref{prop:q-mve}, we obtain that $u$ 
satisfies the generalized entropy inequality in Definition \ref{def:ent-sol-2}.
The proof is then concluded by the uniqueness of the entropy solution, see Proposition \ref{prop:uniq}.

Finally, Corollary \ref{cor:regularity} follows directly from the argument above and the sample path regularity of $\varpi$ obtained in Proposition \ref{prop:q} $(i)$.
\end{proof}

We close this section with some useful notations.
For a function $\varphi=\varphi(t,x)$, let
\begin{equation}\label{eq:notion1}
  \varphi_i(t) := \varphi \left( t,\tfrac{i}{N} - \tfrac{1}{2N} \right), \quad \bar{\varphi}_i(t) := N \int_{\tfrac{i}{N} - \tfrac{1}{2N}}^{\tfrac{i}{N} + \tfrac{1}{2N}} \varphi (t,x) dx.
\end{equation}
Recall the mesoscopic scale $K=K(N)$ in \eqref{eq:mesoscopic}.
For a sequence $\{a_i;i=0,\ldots,N\}$, $\hat a_{i,K}$ stands for the smoothly weighted average in \eqref{eq:block-average}.
Since $K$ is fixed through the paper, we write $\hat a_i$ when there is no confusion.
We shall frequently use the notions of discrete gradient and Laplacian operators, which are defined as usual:
\begin{equation}\label{eq:notion2}
  \nabla a_i =a_{i+1} - a_i, \quad \nabla^* a_i = a_{i-1} - a_i, \quad \Delta a_i = a_{i+1} - 2 a_i + a_{i-1}.
\end{equation}
Notice that $\Delta=-\nabla\nabla^*=-\nabla^*\nabla$.

\section{Tightness}\label{sec:tight}

Recall that $\Q_N$ is the distribution of $(\pi^N,\nu^N)$ on $\mathcal D\times\mathcal Y$.
Here $\mathcal D=\mathcal D([0,T],\mathcal M_+([0,1]))$ is the space of c\`adl\`ag paths endowed with the Skorokhod topology and $\mathcal Y=\mathcal Y(\Sigma_T)$ is the space of Young measures endowed with the topology defined by \eqref{eq:topology-young}.

\begin{proof}[Proof of Lemma \ref{lem:tight}]
It suffices to show that both $\{\pi^N\}$ and $\{\nu^N\}$ are tight.
The coordinate $\nu^N$ is easy.
Since $u^N\in[0,1]$, for all $N$ we have $\Q_N\{\nu^N\in\mathcal Y_*\}=1$, where
\[\mathcal Y_* := \big\{\nu\in\mathcal Y \,\big|\, \nu_{t,x}([0,1])=1 \text{ for all } (t,x)\in\Sigma_T\big\}.\]
Because $\mathcal Y_*$ is compact in $\mathcal Y$, $\{\nu^N\}$ is tight.

For the coordinate $\pi^N$, by \cite[Chapter 4, Theorem 1.3 \& Proposition 1.7]{klscaling}, we only need to show that for any $\psi\in\mathcal C([0,1])$, some constant $C_\psi$ and any $\delta>0$,
\begin{align}
  &\sup_N \Q_N \left\{ \sup_{t\in[0,T]} \big|\langle \pi^N(t),\psi \rangle\big| < C_\psi \right\} = 1,\\
  &\lim_{\varepsilon\downarrow0} \lim_{N\to\infty} \Q_N \left\{ \sup_{|t-s|<\varepsilon} \big|\langle \pi^N(t),\psi \rangle - \langle \pi^N(s),\psi \rangle\big| > \delta \right\} = 0,\label{eq:tight-2}
\end{align}
where $\langle\,\cdot,\cdot\,\rangle$ is the scalar product between $\mathcal M_+$ and $\mathcal C([0,1])$.
The first one is obvious:
\begin{equation}\label{eq:tight-3}
  \big|\langle \pi^N(t),\psi \rangle\big| = \frac1N \left| \sum_{i=0}^N \eta_i(t)\psi \left( \tfrac iN \right) \right| \le \frac1N\sum_{i=0}^N \left| \psi \left( \tfrac iN \right) \right|, \quad \forall\,t\in[0,T].
\end{equation}
For the second one, choose $\psi_*\in\mathcal C_c^2((0,1))$ such that $\|\psi-\psi_*\|_{L^1}<4^{-1}\delta$.
Note that
\[\big|\langle \pi^N(t),\psi-\psi_* \rangle - \langle \pi^N(s),\psi-\psi_* \rangle\big| \le \frac1N\sum_{i=0}^N \left| \psi \left( \tfrac iN \right) - \psi_* \left( \tfrac iN \right) \right| < \frac\delta2,\]
uniformly in $s$, $t$ and all sample paths.
Hence, it suffices to show \eqref{eq:tight-2} with $\psi$ replaced by $\psi_*$.
Without loss of generality, let $s<t$.
Then,
\begin{equation}\label{eq:tight-4}
  \langle \pi^N(t),\psi_* \rangle - \langle \pi^N(s),\psi_* \rangle = \int_s^t L_N\big[\langle \pi^N(r),\psi_* \rangle\big]dr + M_{N,\psi_*}(t-s),
\end{equation}
where $M_{N,\psi_*}$ is the Dynkin's martingale.
As $\psi_*$ is compactly supported, $\eta_0$ or $\eta_N$ does not appear in $\langle \pi^N(r),\psi_* \rangle$, so that
\begin{align*}
	L_N\big[\langle \pi^N,\psi_* \rangle\big] =\,&(1-p+\sigma_N) \sum_{i=1}^{N-1} \eta_i \left[ \psi_* \left( \tfrac{i+1}N \right) + \psi_* \left( \tfrac{i-1}N \right) - 2\psi_* \left( \tfrac iN \right) \right]\\
    	&+\,(2p-1)\sum_{i=0}^{N-1} \eta_i(1-\eta_{i+1}) \left[ \psi_* \left( \tfrac{i+1}N \right) - \psi_* \left( \tfrac iN \right) \right]\\
   	&+\,\frac1N\sum_{i=0}^N V \left( \tfrac iN \right) \left[ \eta_i-\rho \left( \tfrac iN \right) \right] \psi_* \left( \tfrac iN \right).
\end{align*}
Using the fact that $\sigma_N \ll N$ and $\psi_*\in\mathcal C_c^2((0,1))$, $L_N[\langle \pi^N,\psi_* \rangle]$ is uniformly bounded.
Therefore, the first term in \eqref{eq:tight-4} vanishes uniformly when $|t-s|\to0$.
We are left with the martingale in \eqref{eq:tight-4}.
Dy Dynkin's formula, the quadratic variation reads
\[\langle M_{N,\psi_*} \rangle(t-s) = \int_s^t \Big(L_N\big[\langle \pi^N,\psi_* \rangle^2\big] -  2 \langle \pi^N,\psi_* \rangle L_N\big[\langle \pi^N,\psi_* \rangle\big]\Big)\,dr.\]
Recall that $\psi_*\in\mathcal C_c^2((0,1))$, direct calculation shows that
\begin{align*}
L_N\big[\langle \pi^N,\psi_* \rangle^2\big] -  2 \langle \pi^N,\psi_* \rangle L_N\big[\langle \pi^N,\psi_* \rangle\big] = \frac{1}{N^2}\sum_{i=0}^{N-1} c_{i,{\rm G}} (\eta) (1-2\eta_i)^2 \psi_*^2 \left( \tfrac iN \right)\\
+\,\frac1N\sum_{i=0}^{N-1} \left( c_{i,i+1} (\eta) + \frac{\sigma_N}{2} \right) (\eta_i - \eta_{i+1})^2 \left[ \psi_* \left( \tfrac{i+1}N \right) - \psi_* \left( \tfrac iN \right) \right]^2.
\end{align*}
By \eqref{sigma_condition}, it is bounded from above by $CN^{-1}$.
Therefore,
\begin{equation}
  E^{\Q_N} \big[|\langle M_{N,\psi_*} \rangle(t-s)|\big] \le C(t-s)N^{-1}.
\end{equation}
We only need to apply Doob's inequality.
\end{proof}

\begin{proof}[Proof of Proposition \ref{prop:q}]
For $(\romannumeral1)$, notice that \eqref{eq:tight-3} and the weak convergence yield that for any fixed $\psi\in\mathcal C([0,1])$, $\Q\{\sup_{t\in[0,T]} |\langle \pi(t),\psi \rangle| \le C\|\psi\|_{L^1}\}=1$.
Since $\mathcal C([0,1])$ is separable, by standard density argument it holds $\Q$-almost surely that
\[\sup_{t\in[0,T]} \big|\langle \pi(t),\psi \rangle\big| \le C\|\psi\|_{L^1}, \quad \forall\,\psi\in\mathcal C([0,1]).\]
So $\pi(t)$ is absolutely continuous with respect to the Lebesgue measure on $[0,1]$ and can then be written as $\varpi(t,x)dx$.
Moreover, \eqref{eq:tight-2} assures that $\Q$ is concentrated on continuous paths, see \cite[Chapter 4, Remark 1.5]{klscaling}.
The continuity of $t \mapsto \varpi(t,\cdot)$ is then proved.

Since $(\romannumeral2)$ is a direct result from the proof of the tightness of $\{\nu^N\}$, we are left with $(\romannumeral3)$.
Pick $\varphi\in\mathcal C^1(\Sigma_T)$.
From the definition of $\nu_{t,x}^N$,
\[\int_0^1 \varphi(t,x)dx \left[ \int_0^1 \lambda\nu_{t,x}^N(d\lambda) \right] = \frac1N\sum_{i=K}^{N-K} \hat\eta_{i,K}(t)\bar\varphi_i(t),\]
where $\bar\varphi_i(t)$ is given by \eqref{eq:notion1}.
By the regularity of $\varphi$,
\[\left| \iint_{\Sigma_T} dxdt \left[ \varphi\int_0^1 \lambda\nu_{t,x}^N(d\lambda) \right] - \int_0^T \langle \pi^N,\varphi(t,\cdot) \rangle dt \right| \le \frac {C_\varphi TK}N.\]
Since $K = K(N) \ll N$, the right-hand side above vanishes as $N\to\infty$.
From the weak convergence, this yields that for any $\delta>0$ and fixed $\varphi\in\mathcal C^1(\Sigma_T)$,
\[\Q \left\{ \left| \iint_{\Sigma_T} dxdt \left[ \varphi\int_0^1 \lambda\nu_{t,x}(d\lambda) \right] - \int_0^T \langle \pi,\varphi(t,\cdot) \rangle dt \right| > \delta \right\} = 0.\]
By choosing a countable and dense subset of $\mathcal C^1(\Sigma_T)$ and applying $(\romannumeral1)$,
\[\Q \left\{ \iint_{\Sigma_T} \varphi \left[ \int_0^1 \lambda\nu_{t,x}(d\lambda) - \varpi \right] dxdt = 0, \ \forall\,\varphi\in\mathcal C^1(\Sigma_T) \right\} = 1.\]
The conclusion in $(\romannumeral3)$ then follows.
\end{proof}

\section{Compensated compactness}\label{sec:compensated compactness}

Given a Lax entropy--flux pair $(f,q)$, recall the entropy production defined in \eqref{eq:ent-prod}.
To simplify the notations, we will write $X(\nu,\varphi)$ when the choice of $(f,q)$ is clear.
Without loss of generality, we also fix $p=1$ to shorten the formulas.

This section is devoted to the proof of Proposition \ref{Q_dirac_property}.
Note that $\mathcal Y_d$, the subset of delta-type Young measures, is not closed in $\mathcal Y$, so $\Q_N(\mathcal Y_d)=1$ for all $N$ does not guarantee that $\Q(\mathcal Y_d)=1$.
From \cite[Proposition 2.1 \& Lemma 5.1]{Fritz04}, Proposition \ref{Q_dirac_property} follows from the next result, see also \cite[Section 5.6]{fritz2004derivation}.

\begin{proposition}\label{prop:compensated compactness}
Fix an arbitrary Lax entropy--flux pair $(f,q)$. Let $\varphi=\phi\psi$ with $\phi \in \mathcal{C}_c^\infty (\Sigma_T)$ and $\psi \in \mathcal C^\infty(\R^2)$. Then, we have the following decomposition: \[X (\nu^N,\varphi) = Y_N (\varphi) + Z_N (\varphi),\] and there exist random variables $A_{N,\phi}$, $B_{N,\phi}$ independent of $\psi$, such that
\begin{align}
	&|Y_N(\varphi)| \le A_{N,\phi}\|\psi\|_{H^1}, \quad \limsup_{N\to\infty} \E_{\mu_N} [A_{N,\phi}] = 0;\label{condition_1}\\
	&|Z_N(\varphi)| \le B_{N,\phi}\|\psi\|_{L^\infty}, \quad \sup_{N\ge1} \E_{\mu_N} [B_{N,\phi}] < \infty.\label{condition_2}
\end{align}
Here, $\|\cdot\|_{H^1}$ and $\|\cdot\|_{L^\infty}$ are the $H^1$- and $L^\infty$-norm computed on $\Sigma_T=(0,T)\times(0,1)$.
\end{proposition}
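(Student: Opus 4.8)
The plan is to expand the empirical entropy production through the microscopic dynamics and sort the resulting terms into a ``gradient/fluctuation'' part, controlled in the dual $H^{-1}$ sense by $\|\psi\|_{H^1}$ and vanishing in the mean ($Y_N$), and a ``bounded-measure'' part, controlled by $\|\psi\|_{L^\infty}$ with uniformly bounded mean ($Z_N$). Since $\nu^N$ is the delta-Young measure at $u^N$ and $u^N$ is piecewise constant in $x$, I first rewrite, using the notations of \eqref{eq:notion1},
\[
X(\nu^N,\varphi) = -\frac1N\sum_{i}\int_0^T f(\hat\eta_i)\,\partial_t\bar\varphi_i\,dt - \sum_{i}\int_0^T q(\hat\eta_i)\,(\varphi_{i+1}-\varphi_i)\,dt,
\]
which is exact because $\int_{\chi_{N,i}}\partial_x\varphi\,dx=\varphi_{i+1}-\varphi_i$; note the reservoir sites $\eta_0,\eta_N$ never enter since $\varphi=\phi\psi$ is compactly supported in $(0,1)$, and likewise the outermost blocks drop for large $N$. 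As $\phi\in\mathcal C_c^\infty(\Sigma_T)$ is compactly supported in time, an integration by parts in $t$ has no boundary contribution, and Dynkin's formula $df(\hat\eta_i)=L_Nf(\hat\eta_i)\,dt+dM_i^f$ splits off a martingale $M_N(\varphi):=\tfrac1N\sum_i\int_0^T\bar\varphi_i\,dM_i^f$ and a drift $\tfrac1N\sum_i\int_0^T\bar\varphi_i L_Nf(\hat\eta_i)\,dt$.

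Next I would expand $L_Nf(\hat\eta_i)=N(L_{\rm exc}+L_{\rm G}+L_{\rm bd})f(\hat\eta_i)$ by Taylor, using that a single exclusion swap changes $\hat\eta_i$ by $O(K^{-2})$ (the increment carries the weight difference $w_{i-j}-w_{i-j-1}$) and a Glauber flip by $O(K^{-1})$. This produces: (a) the first-order asymmetric exclusion term $Nf'(\hat\eta_i)\sum_j c_{j,j+1}(w_{i-j}-w_{i-j-1})(\eta_{j+1}-\eta_j)$, which after summation by parts in $j$ is a discrete flux gradient; (b) the first-order symmetric term, proportional to $N\sigma_N$ times a discrete Laplacian of the weighted averages; (c) the second-order exclusion term $\tfrac12 f''(\hat\eta_i)\sum_j N(c_{j,j+1}+\tfrac{\sigma_N}2)(w_{i-j}-w_{i-j-1})^2(\eta_{j+1}-\eta_j)^2\ge0$, the sign-definite entropy dissipation; (d) the Glauber term, whose first order is $-f'(\hat\eta_i)\sum_k w_{i-k}V_k(\eta_k-\rho_k)$ via the identity $c_{k,{\rm G}}(1-2\eta_k)=V_k(\rho_k-\eta_k)$, plus an $O(K^{-1})$ second-order remainder; and (e) the boundary term, which vanishes for large $N$ since $\mathrm{supp}\,\phi$ stays away from $\{0,1\}$.

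The decisive step is to pair (a) with the macroscopic current term $-\sum_i\int q(\hat\eta_i)(\varphi_{i+1}-\varphi_i)$. Using $q'=J'f'$ and $c_{j,j+1}(\eta_{j+1}-\eta_j)=-J_j$ (for $p=1$), the leading parts cancel, and the residual is a genuine spatial gradient built from (i) the one/two-block replacement of the averaged microscopic current by $J(\hat\eta_i)$ and (ii) the discretization of $\partial_x$ at scale $K/N$. I would write this residual, together with the viscous term (b), as $\partial_x(\cdot)$ acting on quantities whose $L^2(\Sigma_T)$-mean is small; upon summing against $\bar\varphi_i$ and moving the discrete gradient back onto $\psi$, they are bounded by $A_{N,\phi}\|\psi\|_{H^1}$, the smallness $\E_{\mu_N}[A_{N,\phi}]\to0$ following from the microscopic Dirichlet-form estimate generated by the viscosity and from the scale balance \eqref{eq:mesoscopic}. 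The martingale likewise joins $Y_N$: its quadratic variation has rate of order $N\sigma_N K^{-3}+K^{-1}\to0$, and since the jumps are localized over $O(K)\ll N$ sites the associated space-time distribution satisfies $\E_{\mu_N}\|\cdot\|_{H^{-1}(\Sigma_T)}^2\lesssim\sigma_N T/K^2\to0$. Finally the sign-definite dissipation (c), the source (d) and the lower-order Glauber remainder are bounded pointwise by $\|\psi\|_{L^\infty}$, with uniformly bounded mean for the source by the energy bound \eqref{eq:micro-energy-bd} (recall $V\le C_\phi$ on $\mathrm{supp}\,\phi$); these form $Z_N$.

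The main obstacle is the pairing in step (a): establishing that the flux residual is, up to $H^{-1}$-small gradient errors, exactly what cancels the macroscopic current. This requires the two-block replacement of $\widehat{\eta(1-\eta)}$ by $J(\hat\eta)$ with an error controlled \emph{uniformly in the test function}, which is where the viscosity $\sigma_N$ is indispensable and where all three constraints in \eqref{eq:mesoscopic} must be used simultaneously. A secondary technical point is to make every bound hold with random coefficients $A_{N,\phi},B_{N,\phi}$ genuinely independent of $\psi$, that is, as $H^{-1}$- and total-variation-type operator norms rather than for a fixed $\psi$, following the scheme of \cite{Fritz04, fritz2004derivation}.
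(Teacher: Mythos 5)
Your overall architecture coincides with the paper's: rewrite $X(\nu^N,\varphi)$ exactly via the piecewise-constant structure, apply Dynkin's formula to $f(\hat\eta_i)$, Taylor-expand $L_Nf(\hat\eta_i)$, cancel the leading asymmetric term against the macroscopic flux term via $q'=J'f'$ so that only the block-replacement residual $\hat J_i-J(\hat\eta_i)$ survives, and then sort everything into an $H^{-1}$-small part and a bounded-measure part using the Dirichlet form estimate generated by the viscosity together with the scale relations \eqref{eq:mesoscopic}. The treatment of the martingale, the source term (bounded deterministically since $V$ is bounded on $\mathrm{supp}\,\phi$), the boundary blocks, and the two-block replacement lemma all match the paper's Lemmas \ref{lem:decomposition}--\ref{lem:A_NS_N}.

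There is, however, one genuine error in your sorting: you place the \emph{entire} viscous term (b), $\sigma_N\sum_i\bar\varphi_i f'(\hat\eta_i)\Delta\hat\eta_i$, into $Y_N$, claiming it can be written as a discrete spatial gradient of an $L^2$-small quantity. This fails because $f'(\hat\eta_i)\Delta\hat\eta_i$ is not a perfect gradient when $f$ is nonlinear: summation by parts leaves the commutator $-\sigma_N\sum_i\bar\varphi_i\,\nabla f'(\hat\eta_i)\,\nabla\hat\eta_i\approx-\sigma_N\sum_i\bar\varphi_i f''(\hat\eta_i)(\nabla\hat\eta_i)^2$, whose expectation is of order $\sigma_N(\sigma_N^{-1}+NK^{-3})=O(1)$ by \eqref{eq:h1} — bounded, but \emph{not} vanishing. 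This term (the paper's $\mathcal S_{N,1}$ in \eqref{eq:S_N1}) must be assigned to $Z_N$ under \eqref{condition_2}; indeed it is the unique term that survives in the limit and produces the inequality (rather than equality) in \eqref{entropy_inequality}. If it truly belonged to $Y_N$ the limit would satisfy an entropy \emph{equality} for every convex $f$, which is incompatible with shock formation. Relatedly, you misidentify the surviving sign-definite dissipation as your term (c), the second-order Taylor remainder of the exclusion generator; that term is of order $N\sigma_NK^{-3}\cdot\|\psi\|_{L^\infty}$ per your own computation and is negligible by \eqref{eq:mesoscopic} (it sits inside the paper's $\epsilon^{(1)}_{i,K}$). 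The fix is purely organizational — move the commutator to $Z_N$, where it is trivially bounded by $C_\phi\|\psi\|_{L^\infty}\cdot\sigma_N\sum_i(\nabla\hat\eta_i)^2$ with uniformly bounded expectation — but as written \eqref{condition_1} is false for your $Y_N$.
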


\begin{remark}
Proposition \ref{prop:compensated compactness} is a microscopic (stochastic) synthesis of the Murat--Tartar theory established in \cite{Murat78,Tartar79}.
The role of the function $\phi\in\mathcal C_c^\infty(\Sigma_T)$ is to localize the estimates in \eqref{condition_1} and \eqref{condition_2} away from the boundaries where the potential $V$ is unbounded.
\end{remark}

\subsection{Basic decomposition}
We first prove a basic decomposition for the microscopic entropy production.
Given a bounded function $\varphi=\varphi(t,x)$,
recall the notations $\varphi_i=\varphi_i(t)$ and $\bar\varphi_i=\bar\varphi_i(t)$ defined in \eqref{eq:notion1}.
Denote by $\hat\eta_i(t)=\hat\eta_{i,K}(t)$, $\hat J_i(t)=\hat J_{i,K}(t)$ and $\hat G_i(t)=\hat G_{i,K}(t)$ the smoothly weighted averaged averages introduced in \eqref{eq:block-average} of $\eta_i(t)$, $J_i(t)=\eta_i(t)(1-\eta_{i+1}(t))$ and $G_i(t)=V_i(\eta_i(t)-\rho_i)$.
We shall abbreviate them to $\hat\eta_i$, $\hat J_i$ and $\hat G_i$ when there is no confusion.

\begin{lemma}\label{lem:decomposition}
Fix a Lax entropy--flux pair $(f,q)$.
For $\varphi \in \mathcal C^1(\R^2)$ such that $\varphi(T,\cdot)=0$,
	\begin{equation}\label{entropy decomposition}
		X(\nu^N,\varphi) = \frac{1}{N} \sum_{i=K}^{N-K} f (\hat{\eta}_i (0)) \bar{\varphi}_i (0) + \Acal_N + \mathcal{S}_N + \mathcal{G}_N + \Mcal_N + \Ecal_N.
	\end{equation}
The terms in \eqref{entropy decomposition} are defined below.
$\mathcal A_N$, $\mathcal S_N$ and $\mathcal G_N$ are given by
\begin{align*}
	&\Acal_N=\Acal_N (\varphi) :=\int_0^T  \sum_{i=K}^{N-K} \bar{\varphi}_i(t) f^\prime (\hat{\eta}_{i})  \nabla^* \left[ \hat{J}_{i} - J (\hat{\eta}_{i}) \right] dt,\\
	&\mathcal{S}_N=\mathcal{S}_N (\varphi) := \sigma_N \int_0^T \sum_{i=K}^{N-K} \bar{\varphi}_i(t) f^\prime (\hat{\eta}_{i}) \Delta \hat{\eta}_{i}   dt,\\
	&\mathcal{G}_N=\mathcal{G}_N (\varphi) := -\int_0^T \frac{1}{N} \sum_{i=K}^{N-K} \bar\varphi_i(t) f' (\hat{\eta}_{i}) \hat G_i\,dt.
\end{align*}
$\Mcal_N$ is a martingale given by
\[\Mcal_N=\Mcal_N (\varphi) := -\int_0^T \frac{1}{N} \sum_{i=K}^{N-K} \bar{\varphi}'_i(t)M_i(t)dt,\]
where $M_i=M_i(t)$ is the Dynkin's martingale associated with $f(\hat\eta_i(t))$, see \eqref{decom_1} below.
Finally, $\Ecal_N=\Ecal_{N,1}+\Ecal_{N,2}+\Ecal_{N,3}$ is defined through
\begin{align*}
	\Ecal_{N,1}=\Ecal_{N,1} (\varphi) &:=  -f(0)\iint_{B_{T,N}} \partial_t\varphi\,dxdt - q(0)\iint_{B_{T,N}} \partial_x\varphi\,dxdt,\\
	\Ecal_{N,2}=\Ecal_{N,2} (\varphi) &:= \int_0^T \sum_{i=K}^{N-K}  \big[ \bar{\varphi}_i  \nabla^* q (\hat{\eta}_i) - q (\hat{\eta}_i)  \nabla \varphi_i  \big] dt,\\
	\Ecal_{N,3}=\Ecal_{N,3} (\varphi) &:= \int_0^T  \frac1N\sum_{i=K}^{N-K} \bar{\varphi}_i \left[\epsilon_{i,K}^{(1)}  + \epsilon_{i,K}^{(2)} \right] dt,
\end{align*}
where $B_{T,N}$ is the region $t\in[0,T]$ and $x\in[0,\tfrac{2K-1}{2N})\cup[1-\tfrac{2K-1}{2N},1]$,
\[\epsilon_{i,K}^{(1)}  = L_N f (\hat{\eta}_i)  - f^\prime (\hat{\eta}_i) L_N \hat{\eta}_i, \quad \epsilon_{i,K}^{(2)} = N\big[f^\prime (\hat{\eta}_i)\nabla^* J(\hat{\eta}_i) - \nabla^* q (\hat{\eta}_i)\big].\]
\end{lemma}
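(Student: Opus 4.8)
The plan is to derive the decomposition \eqref{entropy decomposition} directly from the definition of the entropy production $X(\nu^N,\varphi)$ in \eqref{eq:ent-prod} by applying It\^o's (Dynkin's) formula to the time-integral of $f(\hat\eta_i(t))$ and carefully tracking each contribution. First I would rewrite $X(\nu^N,\varphi)$ in terms of the empirical block averages: since $\nu^N$ is the delta-Young measure attached to $u^N$ in \eqref{eq:empirical-young}, we have $\int_\R f\,d\nu^N_{t,x}=f(\hat\eta_{i,K}(t))$ on each cell, so
\[
  X(\nu^N,\varphi) = -\int_0^T \sum_{i=K}^{N-K} \Big[ \bar\varphi_i'(t) f(\hat\eta_i) + \nabla\varphi_i(t)\, q(\hat\eta_i) \Big]\,dt + \Ecal_{N,1},
\]
where $\Ecal_{N,1}$ collects the boundary cells $B_{T,N}$ on which the block average is not defined (this is exactly why $f(0)$ and $q(0)$ appear there, using $\eta\equiv0$ as the convention off the block-averaging range). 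I would integrate the $\bar\varphi_i' f(\hat\eta_i)$ term by parts in time after substituting the Dynkin decomposition
\begin{equation}\label{decom_1}
  f(\hat\eta_i(t)) = f(\hat\eta_i(0)) + \int_0^t L_N f(\hat\eta_i)(s)\,ds + M_i(t),
\end{equation}
which immediately produces the initial term $\tfrac1N\sum f(\hat\eta_i(0))\bar\varphi_i(0)$ (using $\varphi(T,\cdot)=0$ to kill the boundary term at $t=T$) and the martingale term $\Mcal_N$.

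Next I would process the generator term $\int_0^t L_N f(\hat\eta_i)\,ds$. The idea is to add and subtract $f'(\hat\eta_i)L_N\hat\eta_i$, which isolates the nonlinearity error $\epsilon^{(1)}_{i,K}=L_N f(\hat\eta_i)-f'(\hat\eta_i)L_N\hat\eta_i$ that lands in $\Ecal_{N,3}$. The remaining piece $f'(\hat\eta_i)L_N\hat\eta_i$ is then expanded using the explicit action of $N(L_{\rm exc}+L_{\rm G}+L_{\rm bd})$ on a block average. The exclusion part gives a discrete divergence of the instantaneous current, which after smoothing equals $N\nabla^*\hat J_i$ plus the viscous term $N\sigma_N\,\Delta\hat\eta_i$; multiplying by $f'(\hat\eta_i)$ and by $\bar\varphi_i$ produces $\mathcal S_N$ and the pieces destined for $\Acal_N$ and $\Ecal_{N,2}$. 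The Glauber part contributes $-\tfrac1N\sum\bar\varphi_i f'(\hat\eta_i)\hat G_i$, which is precisely $\mathcal G_N$, while the boundary generator $L_{\rm bd}$ contributes nothing in the bulk since the block averages $\hat\eta_i$ for $K\le i\le N-K$ do not involve $\eta_0$ or $\eta_N$.

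The delicate bookkeeping is the splitting of the hyperbolic current term into $\Acal_N$, $\Ecal_{N,2}$ and $\Ecal_{N,3}$. The natural output of the computation is $\tfrac1N\sum\bar\varphi_i f'(\hat\eta_i)\cdot N\nabla^*\hat J_i=\sum\bar\varphi_i f'(\hat\eta_i)\nabla^*\hat J_i$. I would add and subtract $J(\hat\eta_i)$ inside the discrete gradient: the difference $\nabla^*[\hat J_i-J(\hat\eta_i)]$ gives $\Acal_N$ (the gap between the true current and its local-equilibrium value, which is the object controlled by the one-block/two-blocks estimates), while the complementary term $f'(\hat\eta_i)\nabla^* J(\hat\eta_i)$ must be reconciled with the flux $q$ appearing in $X$. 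Here I would use the discrete chain-rule defect $\epsilon^{(2)}_{i,K}=N[f'(\hat\eta_i)\nabla^* J(\hat\eta_i)-\nabla^* q(\hat\eta_i)]$, which measures the failure of the exact Lax relation $q'=J'f'$ at the discrete level and is placed in $\Ecal_{N,3}$; what survives, $\sum\bar\varphi_i\nabla^* q(\hat\eta_i)$, combines with the $\nabla\varphi_i\,q(\hat\eta_i)$ term coming from $X$ via a discrete summation by parts to form $\Ecal_{N,2}$. The main obstacle I anticipate is not any single estimate but the consistent and error-free alignment of discrete boundary conventions: ensuring that every summation-by-parts boundary term at $i=K$ and $i=N-K$ is accounted for (either cancels or is absorbed into $\Ecal_{N,1}$), and that the off-block cells are handled consistently with the $f(0),q(0)$ convention. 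Once the algebra is organized so that each generator piece maps cleanly to exactly one of $\Acal_N,\mathcal S_N,\mathcal G_N,\Mcal_N,\Ecal_{N,1},\Ecal_{N,2},\Ecal_{N,3}$, the identity \eqref{entropy decomposition} follows by collecting terms; the estimates asserting the smallness or boundedness of these pieces are deferred to the subsequent subsections.
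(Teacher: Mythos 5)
Your proposal is correct and follows essentially the same route as the paper: rewrite $X(\nu^N,\varphi)$ via the block averages (with the off-block cells producing $\Ecal_{N,1}$ through the convention $u^N=0$ there), integrate by parts in time using Dynkin's formula and $\varphi(T,\cdot)=0$ to extract the initial term and $\Mcal_N$, then split $L_Nf(\hat\eta_i)$ by inserting $\epsilon^{(1)}_{i,K}$, $\epsilon^{(2)}_{i,K}$ and $\nabla^*[\hat J_i-J(\hat\eta_i)]$, with the surviving $\bar\varphi_i\nabla^*q(\hat\eta_i)$ term pairing with $q(\hat\eta_i)\nabla\varphi_i$ to form $\Ecal_{N,2}$. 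The only cosmetic difference is that the paper does not actually perform the summation by parts inside this lemma (that is deferred to the estimate of $\Ecal_{N,2}$); it simply groups the two terms.
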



\begin{proof}
By the definition of $\nu^N$, 
\begin{equation}\label{decom_2}
	X(\nu^N,\varphi) =  - \int_0^T \frac{1}{N} \sum_{i=K}^{N-K} f (\hat{\eta}_i) \bar{\varphi}_i^\prime\,dt - \int_0^T \sum_{i=K}^{N-K} q (\hat{\eta}_i)\nabla \varphi_i\,dt + \Ecal_{N,1} (\varphi).
\end{equation}
By Dynkin's formula, for $K \leq i \leq N-K$,
\begin{equation}\label{decom_1}
	M_i (t) := f (\hat{\eta}_i (t)) - f (\hat{\eta}_i (0)) - \int_0^t  L_N \big[f (\hat{\eta}_i (s))\big] ds, \quad t\in[0,T],
\end{equation}
defines a martingale.
Since $\varphi$ vanishes at $t=T$, $\Mcal_N(\varphi)$ satisfies that
\begin{equation}\label{decom_3}
	\begin{aligned}
	- \int_0^T \frac1N\sum_{i=K}^{N-K} f(\hat\eta_i)\bar\varphi'_i\,dt =\,&\frac1N\sum_{i=K}^{N-K} f(\hat\eta_i(0))\bar\varphi_i(0)\\
	&+ \int_0^T \frac1N\sum_{i=K}^{N-K} \bar\varphi_i L_N \big[f(\hat\eta_i)\big]dt + \Mcal_N(\varphi).
	\end{aligned}
\end{equation}
Recall the definition of $L_N$ in \eqref{eq:generator}.
Note that, for $K \leq i \leq N-K$, $\hat{\eta}_i$ does not depend on $\eta_{0}$ or $\eta_{N}$, and thus $L_{\rm bd} \hat{\eta}_i = L_{\rm bd} \hat{\eta}_i  = 0$.
Notice that
\[L_N [\hat{\eta}_i] = N \nabla^* \hat{J}_i + N\sigma_N \Delta \hat{\eta}_i 
- \hat G_i, \quad \forall\,i=K,\ldots,N-K.\]
Therefore, for $i=K$, ...,  $N-K$, $L_N[f(\hat\eta_i)]$ is equal to
\begin{align*}
	\epsilon_{i,K}^{(1)} + f^\prime (\hat{\eta}_i) &\left( N \nabla^* \hat{J}_i + N\sigma_N \Delta \hat{\eta}_i - \hat G_i \right) = \epsilon_{i,K}^{(1)} + \epsilon_{i,K}^{(2)}\\
	&+\,Nf^\prime (\hat{\eta}_i) \nabla^* \left[ \hat{J}_i-J(\hat\eta_i) \right] + N \nabla^* q(\hat\eta_i) + f^\prime (\hat{\eta}_i) \left( N\sigma_N \Delta \hat{\eta}_i - \hat G_i \right).
\end{align*}
From the above formula of $L_N[f(\hat\eta_i)]$,
\begin{equation}\label{decom_4}
	\begin{aligned}
	\int_0^T \frac1N\sum_{i=K}^{N-K} \bar\varphi_i L_N \big[f(\hat\eta_i)\big]dt = &\int_0^T \sum_{i=K}^{N-K} \bar\varphi_i\nabla^*q(\hat\eta_i)\,dt \\
	&+\,\Acal_N(\varphi) + \mathcal S_N(\varphi) + \mathcal G_N(\varphi) + \Ecal_{N,3}(\varphi).
	\end{aligned}
\end{equation}
We then conclude the proof by inserting \eqref{decom_3} and \eqref{decom_4} into the first term on the right-hand side of \eqref{decom_2}.
\end{proof}

\subsection{Dirichlet forms}

Given a function $\alpha:[0,1]\to(0,1)$, denote by $\nu^N_{\alpha(\cdot)}$ the product measure on $\Omega_N$ with marginals
\[\nu^N_{\alpha(\cdot)} (\eta(i)=1) = \alpha(i/N), \quad \forall\,i = 0,1, \ldots,N.\]
When $\alpha(\cdot) \equiv \alpha$ is a constant, we shorten the notation as $\nu^N_{\alpha(\cdot)} = \nu^N_{\alpha}$.
Given two probability measures $\nu$ and $\mu$ on $\Omega_N$, let $f:=\mu/\nu$ be the density function.
Define
\begin{align}
	\label{eq:dirichlet_exc}
	D_{\rm exc}^N (\mu;\nu)	&:= \frac{1}{2} \sum_{\eta \in \Omega_N} \sum_{i=0}^{N-1} \Big( \sqrt{f(\eta^{i,i+1})} - \sqrt{f(\eta)} \Big)^2 \nu(\eta),\\
	\label{eq:dirichlet_g}
	D_{\rm G}^N (\mu;\nu) &:= \frac{1}{2N} \sum_{\eta \in \Omega_N} \sum_{i=1}^{N-1} c_{i,{\rm G}} (\eta) \Big( \sqrt{f(\eta^{i})} - \sqrt{f(\eta)} \Big)^2 \nu(\eta),\\
	\label{eq:dirichlet_left}
	D_-^N (\mu;\nu) &:= \frac{1}{2} \sum_{\eta \in \Omega_N} c_0 (\eta) \Big( \sqrt{f(\eta^{0})} - \sqrt{f(\eta)} \Big)^2 \nu(\eta),\\
	\label{eq:dirichlet_right}
	D_+^N (\mu;\nu) &:= \frac{1}{2} \sum_{\eta \in \Omega_N} c_N (\eta) \Big( \sqrt{f(\eta^{N})} - \sqrt{f(\eta)} \Big)^2 \nu(\eta),
\end{align}
with $c_{i,{\rm G}}$ in \eqref{eq:c_iG} and $c_0$, $c_N$ in \eqref{eq:c_bd}.
Note that $D_-^N\equiv0$ if $c_{\rm in}^-=c_{\rm out}^-=0$, and similarly for $D_+^N$.
Let $\mu_t^N$ be the distribution of the process at time $t$.
Define
\[D_{\rm exc}^N(t) = D_{\rm exc}^N (\mu_t^N;\nu_{\frac12}^N), \quad D_{\rm G}^N(t) = D_{\rm G}^N(\mu_t^N;\nu_{\rho(\cdot)}^N),
\quad \forall\,t\ge0.\]
Let $c_-=c_{\rm in}^-(c_{\rm in}^-+c_{\rm out}^-)^{-1}$ when at least one of $c_{\rm in}^-$ and $c_{\rm out}^-$ is positive.
Define
\[D_-^N(t) = D_-^N (\mu_t^N;\nu_{c_-}^N), \quad \forall\,t\ge0.\]
If $c_{\rm in}^-=c_{\rm out}^-=0$ we fix $D_-^N(t)\equiv0$.
Let $c_+$ and $D_+^N(t)$ be defined similarly.

\begin{lemma}\label{lem:dirichlet_form_estimate}
For any $t>0$, there is a constant $C$ independent of $N$, such that
\[\int_0^t \Big[\sigma_ND_{\rm exc}^N(s) + D_{\rm G}^N(s) + D_-^N(s) + D_+^N(s)\Big]\,ds \le C.\]
\end{lemma}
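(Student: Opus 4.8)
The plan is to control the time-integrated Dirichlet forms via the standard entropy-dissipation inequality, i.e.\ to bound the relative entropy production of the process against a suitable reference product measure.
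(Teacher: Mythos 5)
Your one-line plan is indeed the strategy the paper uses: integrate the entropy production $\langle f_s^N,-L_N[\log f_s^N]\rangle_\nu$ in time and bound it by the initial relative entropy, which is $O(N)$. However, as stated the proposal skips over the two points where the ``standard'' argument does not close automatically, and these are the actual content of the paper's proof.

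First, there is no single product measure that is reversible for the whole generator: the symmetric part of the exclusion is reversible for $\nu^N_{1/2}$, the Glauber part for $\nu^N_{\rho(\cdot)}$, and the boundary reservoirs for $\nu^N_{c_\pm}$ --- and the four Dirichlet forms in the lemma are each defined with respect to a \emph{different} reference measure. One must fix one reference measure (the paper takes $\nu^N_{1/2}$) and control all the adjoint/asymmetry terms that arise. Second, and more seriously, the Glauber correction term that appears when comparing $\nu^N_{1/2}$ to $\nu^N_{\rho(\cdot)}$ is
\[
\frac1N\sum_{i=1}^{N-1} V_i\,\log\!\Big(\tfrac{\rho_i}{1-\rho_i}\Big)\big(\rho_i-E^{\mu}[\eta_i]\big),
\]
and since $V$ is non-integrable near the boundary one has $\sum_i V_i\gg N$, so this is \emph{not} bounded by a crude $L^\infty$ estimate. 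The paper needs a separate a priori bound (Lemma \ref{lem:auxiliary}, proved by applying Dynkin's formula to $\frac1N\sum_i a_i\eta_i$ with $a=\log\frac{\rho}{1-\rho}$ and exploiting the cancellation in the current) to show that the time integral of this quantity is $O(1)$. Without identifying and handling this term, the entropy-dissipation inequality alone does not yield the claimed bound on $\int_0^t D^N_{\rm G}(s)\,ds$, so the proposal as written has a genuine gap precisely where the unboundedness of $V$ enters.
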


\begin{proof}
Let $\nu=\nu_\alpha^N$ with $\alpha\equiv\tfrac12$.
For a probability measure $\mu=f\nu$ on $\Omega_N$, from the calculation in Appendix \ref{sec:df_estimates},
	\begin{align}
		\big\langle f, L_{\rm exc}[\log f] \big\rangle_\nu &\le -(\sigma_N+1)D_{\rm exc}^N(\mu;\nu) + C,\label{df_1}\\
		\big\langle f,L_{\rm G}[\log f] \big\rangle_\nu &\le -2D_{\rm G}^N(\mu;\nu_{\rho(\cdot)}^N) + \frac{1}{N}\sum_{i=1}^{N-1} V_i \log \left( \tfrac{\rho_i}{1-\rho_i} \right) (\rho_i-E^\mu [\eta_i]),\label{df_2}\\
		\big\langle f, L_{\rm bd}[\log f] \big\rangle_\nu &\le -2\big[D_-^N(\mu;\nu_{c_-}^N) + D_+^N(\mu;\nu_{c_+}^N)\big] + C.\label{df_3}
	\end{align}
	For $s\in[0,t]$, let $f_s^N:=\mu_s^N/\nu$, then
	\begin{align*}
		\big\langle f_s^N,L_N \log f_s^N \big\rangle_\nu \le\,&- 2N\sigma_ND^N_{\rm exc}(s) - 2ND_{\rm G}^N(s) - 2N\big[D_-^N(s)+D_+^N(s)\big]\\
		&+ \sum_{i=1}^{N-1} V_i \log \left( \tfrac{\rho_i}{1-\rho_i} \right) \big(\rho_i-\E_{\mu_N} [\eta_i(s)]\big) + CN.
	\end{align*}
	Applying Lemma \ref{lem:auxiliary} below, we obtain the estimate
	\[\int_0^t \Big[\sigma_ND_{\rm exc}^N(s) + D_{\rm G}^N(s) + D_-^N(s) + D_+^N(s)\Big]\,ds \le  \int_0^t \frac{\langle f_s^N,-L_N [\log f_s^N] \rangle_\nu}{2N}\,ds + C.\]
	Standard manipulation gives that
	\[\int_0^t \big\langle f_s^N,-L_N [\log f_s^N] \big\rangle_\nu\,ds = \sum_{\eta\in\Omega_N} \log[f_0^N(\eta)]\mu_0^N(\eta) - \sum_{\eta\in\Omega_N} \log[f_t^N(\eta)]\mu_t^N(\eta)\]
	is bounded by $C'N$, so we conclude the proof.
\end{proof}

The following a priori bound is used in the previous proof.

\begin{lemma}\label{lem:auxiliary}
Suppose that $a\in\mathcal C^1([0,1])$ has Lipschitz continuous derivative.
Let $a_i=a_i^N=a(\tfrac iN)$.
Then, there is a constant $C$ independent of $N$, such that
\[\E_{\mu_N} \left[ \int_0^t \frac1N\sum_{i=1}^{N-1} V_ia_i\big(\rho_i-\eta_i(s)\big)\,ds \right] \le C.\]
In particular, the profile $a(x)=\log[\rho(x)]-\log[1-\rho(x)]$ satisfies the condition in the lemma.
\end{lemma}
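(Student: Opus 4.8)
The plan is to test the generator against the \emph{linear} functional
\[
\Phi(\eta) := \frac1N\sum_{i=0}^N a\big(\tfrac iN\big)\eta_i,
\]
and to read off the claimed quantity from the Glauber part of $L_N\Phi$. Since $\eta_i\in\{0,1\}$ and $a$ is continuous, $|\Phi|\le\|a\|_\infty$ uniformly in $N$, so Dynkin's formula gives
\[
\E_{\mu_N}\Big[\int_0^t L_N\Phi(\eta(s))\,ds\Big] = \E_{\mu_N}[\Phi(\eta(t))] - \E_{\mu_N}[\Phi(\eta(0))],
\]
whose right-hand side is bounded by $2\|a\|_\infty$, uniformly in $N$. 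The strategy is then to show that, among the three pieces of $L_N\Phi=N(L_{\rm exc}+L_{\rm G}+L_{\rm bd})\Phi$, the Glauber one \emph{is} the target integrand while the other two are uniformly bounded along the trajectory.

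First I would exploit the algebraic identity $c_{i,{\rm G}}(\eta)(1-2\eta_i)=V_i(\rho_i-\eta_i)$, which follows from \eqref{eq:c_iG} by checking $\eta_i=0$ and $\eta_i=1$. It yields $N L_{\rm G}\eta_i = V_i(\rho_i-\eta_i)$, hence $N L_{\rm G}\Phi = \tfrac1N\sum_{i=1}^{N-1}V_i a_i(\rho_i-\eta_i)$, which is exactly the integrand in the statement; the endpoints $i=0,N$ drop out because $L_{\rm G}$ does not act on them. Notably, the potentially large values of $V_i$ near the boundary enter only through this Glauber term, which is the target itself, so they cause no difficulty in the remaining estimates.

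It thus remains to control $N L_{\rm exc}\Phi$ and $N L_{\rm bd}\Phi$. A direct expansion gives $N L_{\rm exc}\Phi = \sum_{i=0}^{N-1}(c_{i,i+1}+\tfrac{\sigma_N}2)(a_{i+1}-a_i)(\eta_i-\eta_{i+1})$ with $a_i=a(i/N)$. The asymmetric piece is harmless: since $c_{i,i+1}\le1$ and $\sum_i|a_{i+1}-a_i|\le\|a'\|_\infty$, it is bounded by $\|a'\|_\infty$. The main obstacle is the symmetric piece, which carries the diverging prefactor $\sigma_N$: a termwise bound only gives $O(\sigma_N)$. The resolution is a discrete summation by parts,
\[
\sum_{i=0}^{N-1}(a_{i+1}-a_i)(\eta_i-\eta_{i+1}) = \sum_{i=1}^{N-1}(\Delta a_i)\eta_i + (a_1-a_0)\eta_0 - (a_N-a_{N-1})\eta_N,
\]
which turns the first difference of $a$ into a second difference. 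Using that $a'$ is Lipschitz, one has $|\Delta a_i|\le C N^{-2}$ and $|a_{i+1}-a_i|\le\|a'\|_\infty N^{-1}$, so the whole right-hand side is $O(N^{-1})$ and the symmetric contribution is $\tfrac{\sigma_N}2\cdot O(N^{-1})=O(\sigma_N/N)$, which is bounded (indeed vanishes) by the assumption $N^{-1}\sigma_N\to0$ in \eqref{sigma_condition}. It is precisely here that the smoothness of $a$ and the scaling of $\sigma_N$ are needed. The boundary term is immediate: $\Phi(\eta^0)-\Phi(\eta)=\tfrac1N a_0(1-2\eta_0)$ and similarly at $N$, so $|N L_{\rm bd}\Phi|\le(c_{\rm in}^-+c_{\rm out}^-+c_{\rm in}^++c_{\rm out}^+)\|a\|_\infty$.

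Collecting the pieces, I would rearrange Dynkin's identity as
\[
\E_{\mu_N}\Big[\int_0^t\tfrac1N\sum_{i=1}^{N-1}V_i a_i(\rho_i-\eta_i)\,ds\Big]
= \E_{\mu_N}[\Phi(\eta(t))]-\E_{\mu_N}[\Phi(\eta(0))]
-\E_{\mu_N}\Big[\int_0^t (N L_{\rm exc}+N L_{\rm bd})\Phi\,ds\Big],
\]
and bound each term on the right by the estimates above, producing the desired $\le C$ with $C$ depending only on $t$, $\|a\|_\infty$, $\|a'\|_\infty$, the Lipschitz constant of $a'$, and the fixed boundary rates. For the final assertion I would check directly that $a=\log\rho-\log(1-\rho)$ satisfies the hypotheses: $a'=\rho'/[\rho(1-\rho)]$, and since \eqref{uniform_bound} keeps $\rho(1-\rho)$ bounded away from $0$ while $\rho\in\mathcal C^1([0,1])$ has Lipschitz $\rho'$, this quotient is Lipschitz, so $a\in\mathcal C^1([0,1])$ with Lipschitz derivative.
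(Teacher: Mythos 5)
Your proposal is correct and follows essentially the same route as the paper: apply Dynkin's formula to the linear functional $\tfrac1N\sum_i a_i\eta_i$, identify the Glauber contribution with the target integrand, bound the asymmetric current termwise, and kill the $\sigma_N$-weighted gradient term by discrete summation by parts using the Lipschitz continuity of $a'$ and $\sigma_N=o(N)$. The verification that $a=\log\rho-\log(1-\rho)$ satisfies the hypotheses is also as intended.
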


\begin{proof}
	By Dynkin's formula, 
	\[\E_{\mu_N} \left[ \frac1N\sum_{i=0}^N a_i \big(\eta_{i}(t) - \eta_{i}(0)\big) - \int_0^t \frac1N\sum_{i=0}^N a_i L_N \eta_{i}(s)\,ds \right] = 0.\]
	From the definition of $L_N$,
	\begin{align*}
		\frac1N\sum_{i=0}^N a_iL_N \eta_{i} =\,&\frac1N\sum_{i=1}^{N-1} V_ia_i(\rho_i - \eta_i) + \sum_{i=0}^{N-1} (\nabla a_i)j_{i,i+1}\\
		&+ a_0\big[c_{\rm in}^- - (c_{\rm in}^-+c_{\rm out}^-)\eta_0\big] + a_N\big[c_{\rm in}^+ - (c_{\rm in}^++c_{\rm out}^+)\eta_N\big],
	\end{align*}
	where $j_{i,i+1}=\eta_i(1-\eta_{i+1})+\sigma_N(\eta_i-\eta_{i+1})$.
	As $|a_i| \le |a|_\infty$ and $|\eta_i| \le 1$,
	\[\E_{\mu_N} \left[ \int_0^t \frac1N\sum_{i=1}^{N-1} V_ia_i(\rho_i - \eta_i)\,ds + \int_0^t \sum_{i=0}^{N-1} (\nabla a_i)j_{i,i+1}\,ds  \right] \le C.\]
	Furthermore, using sum-by-parts formula,
	\begin{align*}
		\sum_{i=0}^{N-1} (\nabla a_i)j_{i,i+1} =\,&\sum_{i=0}^{N-1} (\nabla a_i)\eta_i(1-\eta_{i+1})\\
		&+ \sigma_N \left[ \sum_{i=1}^{N-1} (\Delta a_i)\eta_i  + (\nabla a_0)\eta_0 - (\nabla a_{N-1})\eta_N \right].
	\end{align*}
	Since $\sigma_N = o(N)$, one can conclude from the regularity of $a(\cdot)$.
\end{proof}

\subsection{Proof of Proposition \ref{prop:compensated compactness}} 

We fix $\phi\in\mathcal C_c^\infty(\Sigma_T)$ and prove Proposition \ref{prop:compensated compactness} by estimating each term in the decomposition \eqref{entropy decomposition} uniformly in $\psi$.
Compared to the proofs of \cite[Proposition 6.1]{xu2021hydrodynamic} and \cite[Lemma 4.3]{xu2022hydrodynamics}, extra effort is needed to take care of the term with respect to $V$.
In the following contents, $C$ is constant that may depend on $(f,q)$ and $T$ but is independent of $(\phi,\psi)$, while $C_\phi$ is constant that also depends on $\phi$ but is independent of $\psi$.

We first show that the error term $\Ecal_N$ vanishes in the limit.

\begin{lemma}\label{lem: error term}
$\Ecal_{N,1}\equiv0$ for sufficiently large $N$.
Moreover, as $N \rightarrow \infty$,
\begin{enumerate}
	\item[$(\romannumeral1)$] $\Ecal_{N,2}$ vanishes uniformly in $H^{-1} (\Sigma_{T})$, thus satisfies \eqref{condition_1}; 
	\item[$(\romannumeral2)$] $\Ecal_{N,3}$  vanishes uniformly in $\Mcal (\Sigma_T)$, thus satisfies \eqref{condition_2}. 
\end{enumerate}
\end{lemma}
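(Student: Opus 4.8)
The plan is to analyze the three pieces of the error term $\Ecal_N = \Ecal_{N,1} + \Ecal_{N,2} + \Ecal_{N,3}$ separately, establishing the stated vanishing properties in the appropriate norms. I will exploit throughout that $\varphi = \phi\psi$ with $\phi$ compactly supported in $\Sigma_T$, so all discrete sums effectively run over indices $i$ bounded away from the boundaries, and that $f$, $q$ are fixed $\mathcal C^2$ functions whose derivatives are therefore bounded on the relevant range of $\hat\eta_i \in [0,1]$.

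\medskip

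\textbf{The term $\Ecal_{N,1}$.} Recall that $B_{T,N}$ is supported in the spatial strips $[0,\tfrac{2K-1}{2N}) \cup [1-\tfrac{2K-1}{2N},1]$, whose widths tend to $0$ as $N \to \infty$ since $K = K(N) \ll N$. Because $\varphi = \phi\psi$ with $\phi \in \mathcal C_c^\infty(\Sigma_T)$, the support of $\phi$ is a compact subset of the open set $\Sigma_T = (0,T)\times(0,1)$, hence bounded away from $x=0$ and $x=1$. For $N$ large enough, these boundary strips do not intersect the $x$-support of $\phi$, so $\partial_t\varphi$ and $\partial_x\varphi$ vanish identically on $B_{T,N}$. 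Therefore $\Ecal_{N,1} \equiv 0$ for all sufficiently large $N$.

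\medskip

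\textbf{The term $\Ecal_{N,2}$, claim (i).} The strategy is to rewrite $\Ecal_{N,2}$ by discrete summation by parts so that it exhibits an explicit factor that is small. Writing $\sum_i \bar\varphi_i \nabla^* q(\hat\eta_i)$ and summing by parts against the flux $q(\hat\eta_i)$, the two terms $\bar\varphi_i\nabla^* q(\hat\eta_i)$ and $-q(\hat\eta_i)\nabla\varphi_i$ should combine to leave a discretization-error remainder controlled by the discrepancy between $\bar\varphi_i$, $\varphi_i$ and the values of $\varphi$ at neighboring lattice points. Since $\varphi$ is $\mathcal C^1$ and the lattice spacing is $N^{-1}$, each such discrepancy is $O(N^{-1})$ in a way that is linear in $\psi$ and its first derivatives. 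The plan is to bound $|\Ecal_{N,2}|$ by a quantity of the form $A_{N,\phi}\|\psi\|_{H^1}$ using Cauchy--Schwarz to pass from the pointwise $\mathcal C^1$ control of $\psi$ to the $H^1$-norm on $\Sigma_T$, while collecting all $\phi$-dependent factors and the vanishing $N$-dependent prefactor into $A_{N,\phi}$. One then verifies $\limsup_N \E_{\mu_N}[A_{N,\phi}] = 0$ using only the uniform boundedness $\hat\eta_i \in [0,1]$ and boundedness of $q'$; no Dirichlet-form input is required here since the smallness comes from the $O(N^{-1})$ discretization gain. The natural route is to phrase this as convergence to $0$ in the $H^{-1}(\Sigma_T)$ dual norm, which is exactly what pairing against $\|\psi\|_{H^1}$ encodes.

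\medskip

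\textbf{The term $\Ecal_{N,3}$, claim (ii), and the main obstacle.} This is where I expect the real work. The quantity $\Ecal_{N,3}$ involves the two microscopic error densities $\epsilon_{i,K}^{(1)} = L_N f(\hat\eta_i) - f'(\hat\eta_i) L_N\hat\eta_i$ and $\epsilon_{i,K}^{(2)} = N[f'(\hat\eta_i)\nabla^* J(\hat\eta_i) - \nabla^* q(\hat\eta_i)]$. The term $\epsilon^{(1)}$ is a second-order Taylor remainder: since $\hat\eta_i$ is a smoothly weighted block average with weights of size $K^{-1}$, each elementary jump changes $\hat\eta_i$ by only $O(K^{-1})$, and the generator carries factors $N$ and $N\sigma_N$; the Taylor expansion of $f$ to second order then produces a remainder whose size is governed by $f''$ times the squared increments, and one must check against the scaling relations $K \ll \sigma_N$, $N\sigma_N \ll K^3$, $\sigma_N^2 \ll NK$ in \eqref{eq:mesoscopic} that the accumulated contribution is controlled. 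The delicate point is the viscous part: the $N\sigma_N\Delta\hat\eta_i$ contribution times the second-order remainder is precisely where the condition $N\sigma_N \ll K^3$ is needed, and it is the hardest estimate because the prefactor $N\sigma_N$ is large. The term $\epsilon^{(2)}$ likewise measures the failure of the discrete chain rule $f'(\hat\eta)\nabla^* J(\hat\eta) = \nabla^* q(\hat\eta)$ at the lattice level; using $q' = J'f'$ this is again a second-order finite-difference remainder of order $N \cdot (\text{increment})^2 = O(N K^{-1})$ per site. The plan is to bound each by its absolute value uniformly, multiply by $\|\psi\|_{L^\infty}$ and the bounded $\frac1N\sum_i|\bar\varphi_i|$, absorb the $\phi$-dependence and the $N$-dependent prefactors into $B_{N,\phi}$, and show $\sup_N \E_{\mu_N}[B_{N,\phi}] < \infty$. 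I anticipate the main obstacle is verifying that all the large prefactors $N$ and $N\sigma_N$ are defeated by the mesoscopic smoothing gains $K^{-1}$, $K^{-2}$ in exactly the combinations permitted by \eqref{eq:mesoscopic}; this is a careful bookkeeping of the three scaling inequalities rather than a conceptual difficulty, and it is the reason those three conditions on $K$ were imposed in the first place. The measure $\Mcal(\Sigma_T)$ convergence claim is then just the statement that a uniformly $L^1$-bounded density vanishes in total-variation mass, which follows once the pointwise-in-$\psi$ bound by $\|\psi\|_{L^\infty}$ is in hand.
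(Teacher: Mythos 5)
Your treatment of $\Ecal_{N,1}$ is correct and identical to the paper's. The main problem is in part $(\romannumeral1)$: you attribute the vanishing of $\Ecal_{N,2}$ to the $O(N^{-1})$ discrepancy $\bar\varphi_i-\varphi_i$, but that gain is exactly consumed by the sum over the $\sim N$ lattice cells. After summation by parts one gets $\Ecal_{N,2}=\int_0^T\sum_i(\bar\varphi_i-\varphi_i)\nabla^*q(\hat\eta_i)\,dt$, and since $\sum_i|\bar\varphi_i-\varphi_i|\le\|\partial_x\varphi(t,\cdot)\|_{L^1}$, bounding $\nabla^*q(\hat\eta_i)$ merely by $\|q\|_{L^\infty}$ yields an $O(1)$ quantity times $\|\psi\|_{H^1}$ — enough for \eqref{condition_2} but not for \eqref{condition_1}, which requires the expectation to vanish. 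The ingredient you are missing is that $\hat\eta_i$ is a block average over $\sim K$ sites, so $|\nabla^*q(\hat\eta_i)|\le \|q'\|_{L^\infty}|\hat\eta_{i-1}-\hat\eta_i|\le CK^{-1}$; this extra factor $K^{-1}$, not the discretization of $\varphi$, is what makes $\Ecal_{N,2}$ vanish. (Passing to second-order accuracy in $\varphi$ instead would cost a $\|\psi\|_{H^2}$ norm, which \eqref{condition_1} does not permit, so your route cannot be repaired without this observation.)

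For part $(\romannumeral2)$ your plan is structurally right (second-order Taylor remainders measured against the scaling relations \eqref{eq:mesoscopic}), but two points need care. First, the exchange jumps change $\hat\eta_i$ by $O(K^{-2})$, not $O(K^{-1})$ — only the Glauber flips give $O(K^{-1})$ — and this finer bound is indispensable: with only $O(K^{-1})$ the viscous remainder would be of order $N\sigma_NK^{-1}\to\infty$ rather than $N\sigma_NK^{-3}\to0$. Relatedly, your ``$N\cdot(\text{increment})^2=O(NK^{-1})$'' for $\epsilon^{(2)}_{i,K}$ should read $O(NK^{-2})$; as written $NK^{-1}\to\infty$ under \eqref{eq:mesoscopic}. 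Second, you do not address the Glauber contribution to $\epsilon^{(1)}_{i,K}$, which carries the unbounded rate $V$; the paper controls it by the compact support of $\phi$ (reducing to $\|\phi V\|_{L^1}$ and $\sup|V'|$ on a compact subinterval) together with $\sum_{|j|<K}w_j^2\le K^{-1}$. This is where the localization by $\phi$ actually does work, so it should appear explicitly.
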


\begin{proof}
For $\phi\in\mathcal C_c^1(\Sigma_T)$, there is $\delta_\phi>0$ such that $\phi(t,x)=0$ for $x\notin(\delta_\phi,1-\delta_\phi)$.
Note that $K \ll N$, so we can find $N_\phi$ depending only on $\phi$ such that $\varphi=\phi\psi=0$ on $B_{T,N}$ for all $\psi$ and $N>N_\phi$.
The vanishment of $\Ecal_{N,1}$ then follows.

We first prove $(\romannumeral1)$.
For $N>N_\phi$, we can perform summation by parts without generating boundary term:
\[	\Ecal_{N,2} (\psi) = \int_0^T \sum_{i=K}^{N-K}  \big( \bar{\varphi}_i - \varphi_i  \big) \nabla^* q (\hat{\eta}_i)dt.
\]
Note that for some $\theta_x \in ((2i-1)/2N,x)$,
\[	|\bar{\varphi}_i - \varphi_i| \leq \int_{\tfrac{i}{N} - \frac{1}{2N}}^{{\tfrac{i}{N} + \frac{1}{2N}}} |\partial_x\varphi(\theta_x)| dx,
\quad |\nabla^* q (\hat{\eta}_i)| \le C\big|\hat\eta_{i-1}-\hat\eta_i\big| \le CK^{-1}.\]
Thus, by Cauchy-Schwarz inequality and that $\varphi=\psi\phi$,
\[|\Ecal_{N,2} (\varphi) | 
\le CK^{-1}\|\varphi\|_{H^1} \le C_\phi K^{-1}\|\psi\|_{H^1}.\]

To prove $(\romannumeral2)$, Taylor's expansion gives that
\[\left| \epsilon_{i,K}^{(1)} \right| \le C\sum_{j=i-K}^{i+K} \left[ N \sigma_N (\hat{\eta}^{j,j+1}_i - \hat{\eta}_i)^2 + V \left( \tfrac jN \right) (\hat{\eta}^j_i - \hat{\eta}_i)^2 \right].\]
By the definition of $\hat\eta_i$ in \eqref{eq:block-average}, $|\hat\eta_i^{j,j+1}-\hat\eta_i| = K^{-2}$ and $|\hat\eta_i^j-\hat\eta_i| = w_{j-i}$, so
\[
	\left| \varepsilon_{i,K}^{(1)} \right| \le CN\sigma_NK^{-3} + C\sum_{|j|<K} V \left( \tfrac{i+j}N \right) w_j^2.\]
As $\varphi=\phi\psi$, the definition \eqref{eq:notion1} of $\bar\varphi_i$ yields that
\[\frac1N\sum_{i=K}^{N-K} \bar\varphi_i \sum_{|j|<K} V \left( \tfrac{i+j}N \right) w_j^2 \le \|\psi\|_{L^\infty}\sum_{|j|<K} w_j^2 \sum_{i=K}^{N-K} V \left( \tfrac{i+j}N \right) \int_{\frac iN-\frac1{2N}}^{\frac iN+\frac1{2N}} |\phi|\,dx.\]
Recall that $\phi=0$ for $x\notin(\delta_\phi,1-\delta_\phi)$.
Let $N'_\phi$ be such that $KN^{-1}<2^{-1}\delta_\phi$ for $N>N'_\phi$.
Then, for every $N>N'_\phi$, the expression above is bounded by
\[\|\psi\|_{L^\infty}\sum_{|j|<K} w_j^2 \left( \|\phi V\|_{L^1} + \|\phi\|_{L^1}\sup \left\{ |V'|; x\in \left[\frac{\delta_\phi}2,1-\frac{\delta_\phi}2 \right] \right\} \frac{2K+1}{2N} \right).\]
From the choice of $w_j$ in \eqref{eq:block-average}, it is bounded by $C_\phi K^{-1}\|\psi\|_{L^\infty}$.
Similarly, using Taylor's expansion and the relation $f'J'=q'$,
\[\left| \epsilon_{i,K}^{(2)} \right| \le CN(\hat{\eta}_{i-1} - \hat{\eta}_i)^2 \le CNK^{-2}.\]
Putting the estimates above together,
\[|\Ecal_{N,3} (\psi) | \le C_\phi \left( \frac{N\sigma_N}{K^3} + \frac N{K^2} + \frac1K \right) \|\psi\|_{L^\infty}.\]
The proof is then concluded by the choice of $K$ in \eqref{eq:mesoscopic}.
\end{proof}

Similarly, with the compactness of $\phi$ we can carry out the estimate for $\mathcal{G}_N$.

\begin{lemma}
	The functional $\mathcal{G}_N$ satisfies \eqref{condition_2}.
\end{lemma}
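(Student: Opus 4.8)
The plan is to show that $\mathcal{G}_N$ admits a \emph{deterministic} bound, so that \eqref{condition_2} holds with a non-random $B_{N,\phi}$. Recall from Lemma~\ref{lem:decomposition} that
\[
\mathcal{G}_N(\varphi) = -\int_0^T \frac1N \sum_{i=K}^{N-K} \bar\varphi_i(t)\, f'(\hat\eta_i)\, \hat G_i\,dt,
\qquad
\hat G_i = \sum_{|j|<K} w_j\, V_{i-j}\big(\eta_{i-j}-\rho_{i-j}\big).
\]
Two of the three factors are harmless: since $f\in\mathcal{C}^2$ and $\hat\eta_i\in[0,1]$, one has $|f'(\hat\eta_i)|\le C$ with $C$ depending only on $(f,q)$; and the definition \eqref{eq:notion1} gives $|\bar\varphi_i(t)|\le \|\psi\|_{L^\infty}\, N\int_{i/N-1/(2N)}^{i/N+1/(2N)}|\phi(t,x)|\,dx$, which isolates the $\|\psi\|_{L^\infty}$-dependence required by \eqref{condition_2}.

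The decisive step, exactly as in the treatment of $\Ecal_{N,3}$, is to use the compact support of $\phi$ to tame the unbounded potential $V$. First I would fix $\delta_\phi>0$ with $\phi=0$ for $x\notin(\delta_\phi,1-\delta_\phi)$ and choose $N'_\phi$ so that $K/N<\delta_\phi/2$ for all $N>N'_\phi$. Then $\bar\varphi_i\neq0$ forces $i/N\in(\delta_\phi,1-\delta_\phi)$, whence every site $i-j$ with $|j|<K$ contributing to $\hat G_i$ satisfies $(i-j)/N\in(\delta_\phi/2,1-\delta_\phi/2)$, a compact subinterval on which the continuous function $V$ is bounded by $V_\phi:=\sup\{V(x):x\in[\delta_\phi/2,1-\delta_\phi/2]\}<\infty$. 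Using $|\eta_{i-j}-\rho_{i-j}|\le1$ together with $\sum_{|j|<K} w_j=1$, this produces the deterministic pointwise bound $|\hat G_i|\le V_\phi$ wherever $\bar\varphi_i\neq0$.

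Inserting these three estimates and summing over the disjoint intervals $[\tfrac iN-\tfrac1{2N},\tfrac iN+\tfrac1{2N})$, I expect to arrive at
\[
|\mathcal{G}_N(\varphi)| \le C\, V_\phi\, \|\phi\|_{L^1}\,\|\psi\|_{L^\infty}, \qquad N>N'_\phi.
\]
Taking $B_{N,\phi}:=C\,V_\phi\|\phi\|_{L^1}$, a non-random constant independent of $N$, yields \eqref{condition_2} at once, since then $\sup_{N}\E_{\mu_N}[B_{N,\phi}]=B_{N,\phi}<\infty$. I do not anticipate a genuine obstacle here: the only subtle point is to confirm that the mesoscopic averaging window defining $\hat G_i$ never reaches the boundary layer where $V$ diverges, which is precisely guaranteed by $K\ll N$ and the compactness of $\phi$. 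The crude bound $|\eta-\rho|\le1$ is wasteful, but amply sufficient for \eqref{condition_2}, so neither the energy estimate \eqref{eq:micro-energy-bd} nor the Dirichlet form bound of Lemma~\ref{lem:dirichlet_form_estimate} is needed at this stage.
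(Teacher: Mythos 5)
Your proposal is correct and follows essentially the same route as the paper: the paper bounds $|\mathcal G_N(\varphi)|$ by $C\int_0^T \frac1N\sum_i |\bar\varphi_i|\sum_{|j|<K}|V(\tfrac{i+j}N)|w_j\,dt$ and then invokes the same localization argument used for $\Ecal_{N,3}$, namely that the compact support of $\phi$ together with $K\ll N$ keeps the averaging window inside a compact subinterval of $(0,1)$ where $V$ is bounded, yielding a deterministic bound $C_\phi\|\psi\|_{L^\infty}$. The only cosmetic difference is that the paper's bound is expressed through $\|\phi V\|_{L^1}$ plus a small derivative correction rather than $\sup V$ on $[\delta_\phi/2,1-\delta_\phi/2]$ times $\|\phi\|_{L^1}$, which changes nothing of substance.
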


\begin{proof}
	Recall that $G_i=V_i(\eta_i-\rho_i)$, so $|G_i| \le C|V(\tfrac iN)|$.
	Then,
	\[|\mathcal{G}_N (\varphi)| \le C\int_0^T \frac1N\sum_{i=K}^{N-K} |\bar\varphi_i|\sum_{|j|<K} \left| V \left( \tfrac{i+j}N \right) \right| w_j\,dt.\]
	By the argument in Lemma \ref{lem: error term} $(\romannumeral2)$, it is bounded by $C_\phi\|\psi\|_{L^\infty}$ uniformly in $N$.
\end{proof}

Now, we deal with the martingale term.

\begin{lemma}\label{lem:martingale}
The martingale $\Mcal_N (\psi)$ satisfies \eqref{condition_1}.
\end{lemma}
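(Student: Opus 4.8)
The plan is to view $\Mcal_N(\psi)$ as a bilinear pairing of the discretized test function against the family of Dynkin martingales $\{M_i\}$ and to control it by \CS inequality, factoring the whole $\psi$-dependence into an $H^1$-norm while leaving a $\psi$-independent random prefactor whose expectation vanishes. The decisive point is to keep the cutoff $\phi$ throughout. Since $\phi\in\mathcal C_c^\infty(\Sigma_T)$ there is $\delta_\phi>0$ with $\phi(t,\cdot)$ supported in $(\delta_\phi,1-\delta_\phi)$ for all $t$, so $\bar\varphi_i'\equiv0$ unless $i/N$ lies within $1/N$ of this interval. Writing $J_\phi$ for this (time-uniform) index set, one has $i/N\in(\delta_\phi/2,1-\delta_\phi/2)$ for $i\in J_\phi$ once $N$ is large, since $K=K(N)\ll N$. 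Restricting the sum in $\Mcal_N$ to $J_\phi$ and applying \CS inequality in the index $i$ and time against the measure $N^{-1}dt$,
\[
|\Mcal_N(\psi)|\le\Big(\int_0^T\tfrac1N\sum_{i\in J_\phi}(\bar\varphi_i'(t))^2\,dt\Big)^{1/2}\Big(\int_0^T\tfrac1N\sum_{i\in J_\phi}M_i(t)^2\,dt\Big)^{1/2}.
\]

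For the first factor I would use $\bar\varphi_i'(t)=N\int_{I_i}\partial_t\varphi\,dx$ with $I_i=[\tfrac iN-\tfrac1{2N},\tfrac iN+\tfrac1{2N})$, so Jensen's inequality gives $(\bar\varphi_i')^2\le N\int_{I_i}(\partial_t\varphi)^2dx$ and hence the factor is at most $\|\partial_t\varphi\|_{L^2(\Sigma_T)}\le C_\phi\|\psi\|_{H^1}$, using $\varphi=\phi\psi$ and the boundedness of $\phi,\partial_t\phi$. I would then set
\[
A_{N,\phi}:=C_\phi\Big(\int_0^T\tfrac1N\sum_{i\in J_\phi}M_i(t)^2\,dt\Big)^{1/2},
\]
which is independent of $\psi$ and satisfies $|\Mcal_N(\psi)|\le A_{N,\phi}\|\psi\|_{H^1}$, reducing the claim to $\E_{\mu_N}[A_{N,\phi}]\to0$.

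For this last step I would pass to the predictable quadratic variation: since $M_i(0)=0$, we have $\E_{\mu_N}[M_i(t)^2]=\E_{\mu_N}[\langle M_i\rangle(t)]$, where $\langle M_i\rangle(t)=\int_0^t\big(L_N(f(\hat\eta_i)^2)-2f(\hat\eta_i)L_Nf(\hat\eta_i)\big)\,ds$ is the time-integral of the carré du champ $\sum_c c(\eta)[f(\hat\eta_i')-f(\hat\eta_i)]^2$ of $f(\hat\eta_i)$. Because $\hat\eta_i\in[0,1]$ and $f\in\mathcal C^2$, and because $L_{\rm bd}$ does not touch $\hat\eta_i$ for $K\le i\le N-K$, this carré du champ is bounded by the very quantity $C\sum_c c(\eta)(\hat\eta_i'-\hat\eta_i)^2$ already estimated for $\epsilon_{i,K}^{(1)}$ in Lemma \ref{lem: error term}$(\romannumeral2)$, namely $\langle M_i\rangle(T)\le CT\big[N\sigma_NK^{-3}+\sum_{|m|<K}V(\tfrac{i+m}N)w_m^2\big]$. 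Here the localization is essential: for $i\in J_\phi$ the arguments $(i+m)/N$ remain in a compact subinterval of $(0,1)$ on which $V$ is bounded by some $V_\phi<\infty$, so $\sum_{|m|<K}V(\tfrac{i+m}N)w_m^2\le V_\phi\sum_{|m|<K}w_m^2\le CV_\phi K^{-1}$. Averaging over $i\in J_\phi$ (with $|J_\phi|\le N$) and applying Jensen's inequality to the concave square root,
\[
\E_{\mu_N}[A_{N,\phi}]\le C_\phi\Big(T\,\tfrac1N\sum_{i\in J_\phi}\E_{\mu_N}[\langle M_i\rangle(T)]\Big)^{1/2}\le C_\phi\big(CT^2[N\sigma_NK^{-3}+V_\phi K^{-1}]\big)^{1/2},
\]
which tends to $0$ by the scaling \eqref{eq:mesoscopic} ($N\sigma_N\ll K^3$ and $K\to\infty$). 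This verifies \eqref{condition_1}.

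The hard part is not the martingale estimate itself but the bookkeeping of the potential $V$: a crude \CS inequality that discards $\phi$ would leave a $\psi$-independent factor controlling $\tfrac1N\sum_i\sum_{|m|<K}V(\tfrac{i+m}N)w_m^2$, whose $i$-sum behaves like $\int V$ and diverges precisely because $V$ is non-integrable at the endpoints under \eqref{v_condition}. The resolution, as above, is to carry the support of $\phi$ all the way into the quadratic-variation bound, so that $V$ is only ever sampled away from the boundary. This is exactly the localizing role the remark after Proposition \ref{prop:compensated compactness} assigns to $\phi$, and it is the genuinely new ingredient compared with the bounded-$V$ arguments of \cite{xu2021hydrodynamic, xu2022hydrodynamics}.
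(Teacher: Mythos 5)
Your proof is correct and follows essentially the same route as the paper: a Cauchy--Schwarz step that isolates $\|\psi\|_{H^1}$ and leaves a $\psi$-independent factor built from $\sum_i M_i^2$, followed by the Dynkin quadratic-variation bound $CN\sigma_N K^{-3}+C\sum_{|m|<K}V(\tfrac{i+m}{N})w_m^2$ and the scaling \eqref{eq:mesoscopic}. The only (immaterial) difference is that the paper keeps the weights $\phi^2+(\partial_t\phi)^2$ attached to the $M_i^2$ integral to localize $V$, whereas you localize by restricting to the index set $J_\phi$ and bounding $V$ by its supremum on a compact subinterval; both yield the same $C_\phi\sqrt{N\sigma_N K^{-3}+K^{-1}}$ estimate.
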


\begin{proof}
Since $\partial_t\varphi=\phi\partial_t\psi+\psi\partial_t\phi$, using \CS inequality,
\[|\Mcal_N(\varphi)|^2 \le \|\psi\|_{H^1}^2 \int_0^T \sum_{i=K}^{N-K} \int_{\frac iN-\frac1{2N}}^{\frac iN+\frac1{2N}} \big[\phi^2+(\partial_t\phi)^2\big]M_i^2(t)dxdt.\]
Through Dynkin's formula, $M_i$ satisfies that
\begin{align*}
	\E_{\mu_N} \big[M_i^2(t)\big] &= \E_{\mu_N} \left[ \int_0^t \sum_{j=i-K}^{i+K} N\big(\eta_j(1-\eta_{j+1})+\sigma_N\big) \big(f(\hat\eta_i^{j,j+1})-f(\hat\eta_i)\big)^2ds \right.\\
	&+ \left. \int_0^t \sum_{j=i-K+1}^{i+K-1} V_j\big[\eta_j(1-\rho_j)+\rho_j(1-\eta_j)\big] \big(f(\hat\eta_i^j)-f(\hat\eta_i)\big)^2ds \right]\\
	&\le CN\sigma_N\sum_{j=i-K}^{i+K} \E_{\mu_N} \big[(\hat\eta_i^{j,j+1}-\hat\eta_i)^2 \big] + C\sum_{|j|<K} V \left( \tfrac{i+j}N \right) w_j^2.
\end{align*}
Similarly to the estimate in Lemma \ref{lem: error term}$(\romannumeral2)$, $|\Mcal_N(\varphi)| \le a_{N,\phi}\|\psi\|_{H^1}$ with
\[\E_{\mu_N} [a_{N,\phi}] \le C_\phi \sqrt{\frac{N\sigma_N}{K^3} + \frac1K}.\]
The proof is then concluded by the choice of $K$ in \eqref{eq:mesoscopic}.
\end{proof}

To deal with $\Acal_N$ and $\mathcal S_N$, we need the block estimates stated below.
They follow from the upper bound of $D_{\rm exc}^N$ in Lemma \ref{lem:dirichlet_form_estimate} and the logarithmic Sobolev inequality for exclusion process \cite{Yau97}.
The proofs are the same as \cite[Proposition 6.4 \& 6.5]{xu2021hydrodynamic} and \cite[Proposition 4.6 \& 4.7]{xu2022hydrodynamics}.
For this reason, we omit the details here.

\begin{lemma}\label{lem:block_estimate}
	There exists some finite constant $C$ independent of $N$, such that 
	\begin{align}
		&\E_{\mu_N} \Big[\int_{0}^{T} \sum_{i=K}^{N-K} \Big(\hat{J}_i (t) - J (\hat{\eta}_i (t))\Big)^2 dt\Big] \leq C \Big(\frac{K^2}{\sigma_N} + \frac{N}{K}\Big),\label{eq:one-block}\\
		&\E_{\mu_N} \Big[\int_{0}^{T} \sum_{i=K}^{N-K} \Big(\nabla \hat{\eta}_i (t)\Big)^2 dt\Big] \leq C \Big(\frac{1}{\sigma_N} + \frac{N}{K^3}\Big).\label{eq:h1}
	\end{align}
\end{lemma}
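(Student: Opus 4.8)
The plan is to follow the Guo--Papanicolaou--Varadhan replacement scheme, made quantitative by two ingredients. The first is the dissipation bound $\int_0^T D_{\rm exc}^N(s)\,ds \le C\sigma_N^{-1}$, which is immediate from Lemma \ref{lem:dirichlet_form_estimate}. The second is the logarithmic Sobolev inequality for the symmetric exclusion process on a box \cite{Yau97}, whose constant on a segment of length of order $K$ is of order $K^2$. Throughout I work with the uniform reference measure $\nu_{\frac12}^N$; since $\sigma_N\to\infty$, the symmetric exchange rate $\sigma_N/2$ dominates the asymmetric rates locally, so on each block the process relaxes to the uniform canonical measure, and the two estimates are derived purely from the exchange dissipation, exactly as in \cite[Proposition 6.4 \& 6.5]{xu2021hydrodynamic} and \cite[Proposition 4.6 \& 4.7]{xu2022hydrodynamics}. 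Below I only indicate how the two stated exponents arise.

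For \eqref{eq:one-block}, note that $\hat J_i$ is a weighted average of the microscopic current $J_k=\eta_k(1-\eta_{k+1})$ over the block $\{i-K+1,\dots,i+K-1\}$, to be compared with the macroscopic flux $J(\hat\eta_i)=\hat\eta_i(1-\hat\eta_i)$ evaluated at the block density. First I freeze the number of particles on the block and pass to the corresponding canonical measure. The equivalence of ensembles gives that the canonical mean of $\hat J_i$ equals $J(\hat\eta_i)$ up to an $O(K^{-1})$ finite-size correction and that the canonical variance of $\hat J_i$ is $O(K^{-1})$; summing this $O(K^{-1})$ bound over the $\sim N$ centres and integrating over $[0,T]$ yields the equilibrium term $N/K$. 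The remaining contribution measures the discrepancy between the true law of the block and the canonical measure, and I control it through the entropy inequality, in which the logarithmic Sobolev inequality converts the relative entropy into the local exclusion Dirichlet form at the cost of the factor $K^2$. Feeding in $\int_0^T D_{\rm exc}^N\,ds \le C\sigma_N^{-1}$ then produces the non-equilibrium term $K^2/\sigma_N$.

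The gradient estimate \eqref{eq:h1} reduces to the same computation after one algebraic identity. Since the triangular weights $w_j=(K-|j|)/K^2$ are the autocorrelation of the uniform kernel of width $K$, a summation by parts rewrites the discrete gradient as
\[
  \nabla\hat\eta_i \;=\; \hat\eta_{i+1}-\hat\eta_i \;=\; \frac1K\big(\bar\eta_i^{+}-\bar\eta_i^{-}\big),
\]
where $\bar\eta_i^{\pm}$ denote suitable simple averages of $\eta$ over the two adjacent blocks of length $K$ flanking $i$. Hence $(\nabla\hat\eta_i)^2=K^{-2}(\bar\eta_i^{+}-\bar\eta_i^{-})^2$ is $K^{-2}$ times a two-block difference at scale $K$, which is handled exactly as in the previous paragraph: the equivalence-of-ensembles contribution of $\sum_i\int_0^T(\bar\eta_i^{+}-\bar\eta_i^{-})^2\,dt$ is $O(N/K)$ and the non-equilibrium one is $O(K^2/\sigma_N)$. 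Multiplying by the prefactor $K^{-2}$ turns $C(K^2/\sigma_N+N/K)$ into $C(\sigma_N^{-1}+N/K^3)$, as claimed.

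The delicate point in both estimates is the sharp bookkeeping of the non-equilibrium part: one must distribute the single global budget $\int_0^T D_{\rm exc}^N\,ds \le C\sigma_N^{-1}$ over the $\sim N$ overlapping blocks so that, together with the $O(K^2)$ logarithmic Sobolev constant and the $O(K^{-1})$ equivalence-of-ensembles correction, precisely the exponents in \eqref{eq:one-block} and \eqref{eq:h1} emerge; the admissible scales \eqref{eq:mesoscopic} are what render both right-hand sides of the right size for the applications in Section \ref{sec:compensated compactness}. Since this is routine once the two ingredients above are in place, I would simply invoke the cited references rather than reproduce the computation.
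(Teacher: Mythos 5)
Your proposal is correct and follows essentially the same route as the paper, which itself omits the details and defers to \cite[Propositions 6.4 \& 6.5]{xu2021hydrodynamic} and \cite[Propositions 4.6 \& 4.7]{xu2022hydrodynamics}: the two ingredients are exactly the dissipation bound $\int_0^T D_{\rm exc}^N\,ds\le C\sigma_N^{-1}$ from Lemma \ref{lem:dirichlet_form_estimate} and the logarithmic Sobolev inequality of \cite{Yau97}, and your identity $\nabla\hat\eta_i=K^{-1}(\bar\eta_i^{+}-\bar\eta_i^{-})$ (valid because the triangular weights $w_j$ are the autocorrelation of the uniform kernel) is the standard reduction of \eqref{eq:h1} to a two-block estimate.
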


Using Lemma \ref{lem:block_estimate}, we can conclude the decompositions of $\Acal_N$ and $\mathcal S_N$.

\begin{lemma}\label{lem:A_NS_N}
Define functionals $\Acal_{N,1}$ and $\mathcal S_{N,1}$ respectively by
\begin{align}
	&\Acal_{N,1}(\varphi) := \int_0^T \sum_{i=K}^{N-K} \bar\varphi_i \left[ \hat{J}_{i} - J (\hat{\eta}_{i}) \right] \nabla f'(\hat\eta_i)dt,\\
	&\mathcal S_{N,1}(\varphi) := -\,\sigma_N\int_0^T \sum_{i=K}^{N-K} \bar\varphi_i\nabla\hat\eta_i\nabla f'(\hat\eta_i)dt.\label{eq:S_N1}
\end{align}
Then, $\Acal_N-\Acal_{N,1}$ and $\mathcal{S}_N-\mathcal S_{N,1}$ satisfy  \eqref{condition_1}, while $\Acal_{N,1}$ and $\mathcal S_{N,1}$ satisfy \eqref{condition_2}.
\end{lemma}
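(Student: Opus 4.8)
The plan is to reduce both claims to a single discrete summation by parts combined with the two block estimates of Lemma \ref{lem:block_estimate}. Write $b_i := \hat J_i - J(\hat\eta_i)$ and $g_i := \bar\varphi_i f'(\hat\eta_i)$. Both $\Acal_N$ and $\mathcal S_N$ have the form $\int_0^T\sum_i g_i\,\nabla^*(\cdot)\,dt$: for $\Acal_N$ the inner quantity is $b_i$, while for $\mathcal S_N$ I first rewrite $\Delta\hat\eta_i = -\nabla^*(\nabla\hat\eta_i)$ so that the inner quantity becomes $\nabla\hat\eta_i$. Using the adjoint relation $\sum_i g_i\nabla^* a_i = \sum_i (\nabla g_i)a_i$ (valid up to boundary terms) together with the discrete Leibniz rule
\[
  \nabla g_i = \bar\varphi_i\,\nabla f'(\hat\eta_i) + (\nabla\bar\varphi_i)\,f'(\hat\eta_{i+1}),
\]
the first summand reproduces exactly $\Acal_{N,1}$ (resp.\ $\mathcal S_{N,1}$), and the second summand is the remainder $\Acal_N-\Acal_{N,1}$ (resp.\ $\mathcal S_N-\mathcal S_{N,1}$). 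Since $\phi\in\mathcal C_c^\infty(\Sigma_T)$ vanishes for $x\notin(\delta_\phi,1-\delta_\phi)$ and $K/N\to0$, the boundary terms generated by the index shifts vanish identically for $N$ larger than some $N_\phi$, exactly as in the proof of Lemma \ref{lem: error term}.

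For the remainders I would verify \eqref{condition_1}. The gain is that $|\nabla\bar\varphi_i|$ carries a factor $N^{-1}$, so that $\int_0^T\sum_i(\nabla\bar\varphi_i)^2\,dt\le C_\phi N^{-1}\|\partial_x\varphi\|_{L^2(\Sigma_T)}^2\le C_\phi N^{-1}\|\psi\|_{H^1}^2$; this is what supplies the $H^1$-norm and an extra $N^{-1/2}$. Since $f'$ is bounded on $[0,1]$, \CS inequality gives $|\Acal_N-\Acal_{N,1}|\le C_\phi N^{-1/2}\|\psi\|_{H^1}(\int_0^T\sum_i b_i^2\,dt)^{1/2}$ and similarly $|\mathcal S_N-\mathcal S_{N,1}|\le C_\phi \sigma_N N^{-1/2}\|\psi\|_{H^1}(\int_0^T\sum_i(\nabla\hat\eta_i)^2\,dt)^{1/2}$. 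Taking the bracketed factors as $A_{N,\phi}$ and inserting \eqref{eq:one-block}, \eqref{eq:h1} yields $\E_{\mu_N}[A_{N,\phi}]\le C_\phi(K^2/(N\sigma_N)+1/K)^{1/2}$ and $\E_{\mu_N}[A_{N,\phi}]\le C_\phi(\sigma_N/N+\sigma_N^2/K^3)^{1/2}$ respectively; both tend to $0$ once one unwinds \eqref{eq:mesoscopic} (for instance $K^2/(N\sigma_N)\ll(K/\sigma_N)^3$ using $\sigma_N^2\ll NK$, and $\sigma_N^2/K^3\ll\sigma_N/N$ using $N\sigma_N\ll K^3$).

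For $\Acal_{N,1}$ and $\mathcal S_{N,1}$ I would verify \eqref{condition_2}. Here I bound $|\bar\varphi_i|\le C_\phi\|\psi\|_{L^\infty}$ and use $|\nabla f'(\hat\eta_i)|\le C|\nabla\hat\eta_i|$ (from $f''\in L^\infty([0,1])$), so that \CS inequality gives
\[
  |\Acal_{N,1}|\le C_\phi\|\psi\|_{L^\infty}\Big(\int_0^T\!\sum_i(\nabla\hat\eta_i)^2\,dt\Big)^{1/2}\Big(\int_0^T\!\sum_i b_i^2\,dt\Big)^{1/2},
\]
while $\mathcal S_{N,1}$ is controlled directly by $C_\phi\|\psi\|_{L^\infty}\,\sigma_N\int_0^T\sum_i(\nabla\hat\eta_i)^2\,dt$. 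Feeding in \eqref{eq:one-block} and \eqref{eq:h1} gives $\E_{\mu_N}[B_{N,\phi}]\le C_\phi[(1/\sigma_N+N/K^3)(K^2/\sigma_N+N/K)]^{1/2}$ for $\Acal_{N,1}$ and $\E_{\mu_N}[B_{N,\phi}]\le C_\phi(1+N\sigma_N/K^3)$ for $\mathcal S_{N,1}$; both remain bounded under \eqref{eq:mesoscopic} (the first in fact vanishes, and the leading $1$ in the second is the genuine Lax-entropy dissipation coming from $f''\ge0$).

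The main obstacle, and really the only nontrivial point, is the scaling bookkeeping: each error term must be shown to vanish or stay bounded \emph{purely} through the three relations $K\ll\sigma_N$, $N\sigma_N\ll K^3$ and $\sigma_N^2\ll NK$ of \eqref{eq:mesoscopic}, and these must be combined in the correct order (they force $K\ll\sigma_N\ll K^{4/3}$). There is no $V$-dependence in these particular terms—the potential enters only through $\mathcal G_N$—so, unlike $\Ecal_{N,3}$, the analysis is a direct transcription of the viscous ASEP estimates, with the block estimates of Lemma \ref{lem:block_estimate} carrying all of the probabilistic content.
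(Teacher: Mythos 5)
Your proof is correct and follows essentially the same route as the paper: the same discrete summation by parts with the Leibniz splitting $\nabla(\bar\varphi_i f'(\hat\eta_i))=\bar\varphi_i\nabla f'(\hat\eta_i)+(\nabla\bar\varphi_i)f'(\hat\eta_{i+1})$ (boundary terms killed by the compact support of $\phi$), followed by Cauchy--Schwarz and the block estimates \eqref{eq:one-block}, \eqref{eq:h1}; your expectation bounds for the four terms coincide with those in the paper, and your bookkeeping with \eqref{eq:mesoscopic} is accurate.
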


The proof of Lemma \ref{lem:A_NS_N} follows \cite[Lemma 6.6 \& 6.7]{xu2021hydrodynamic} almost line by line, so we only sketch the difference.
It is worth noting that, $\mathcal S_{N,1}$ turns out to be the only term that survives in the limit, eventually generates the non-zero macroscopic entropy in \eqref{entropy_inequality}.

\begin{proof}

We first treat $\Acal_N$.
Since $\varphi=\phi\psi$ with $\phi\in\mathcal C_c^\infty (\Sigma_T)$ being fixed, similarly to the proof of Lemma \ref{lem: error term} $(\romannumeral1)$, we have for $N>N_\phi$ that
\[(\Acal_N-\Acal_{N,1})(\varphi) = \int_0^T \sum_{i=K}^{N-K} f'(\hat\eta_{i+1}) \left[ \hat{J}_{i} - J (\hat{\eta}_{i}) \right] \nabla\bar\varphi_i\,dt.\]
Applying \CS inequality and Lemma \ref{lem:block_estimate}, we obtain that $|(\Acal_N-\Acal_{N,1})(\varphi)| \le a_N\|\varphi\|_{H^1}$ and $|\Acal_{N,1}(\varphi)| \le b_N\|\varphi\|_{L^\infty}$, where $(a_N,b_N)$ are random variables such that
\[\E_{\mu_N} [a_N] \le C \sqrt{\frac{K^2}{N\sigma_N} + \frac1K}, \quad \E_{\mu_N} [b_N] \le C \left( \frac K{\sigma_N} + \frac N{K^2} \right).\]
Noting that $\|\varphi\|_{H^1} \le C_\phi\|\psi\|_{H^1}$, $\|\varphi\|_{L^\infty} \le C_\phi\|\psi\|_{L^\infty}$, the conclusion follows from \eqref{eq:mesoscopic}.

The proof for $\mathcal S_N$ is similar.
For $N>N_\phi$,
\[(\mathcal S_N-\mathcal S_{N,1})(\varphi) = -\sigma_N\int_0^T \sum_{i=K}^{N-K} f'(\hat\eta_{i+1})\nabla\bar\varphi_i\nabla\hat\eta_i\,dt.\]
By \CS inequality and Lemma \ref{lem:block_estimate}, $|(\mathcal S_N-\mathcal S_{N,1})(\varphi)| \le a'_N\|\varphi\|_{H^1}$ and $|\mathcal S_{N,1}(\varphi)| \le b'_N\|\varphi\|_{L^\infty}$ with random variables $(a'_N,b'_N)$ satisfying
\[\E_{\mu_N} [a'_N] \le C\sqrt{\frac{\sigma_N}N+\frac{\sigma_N^2}{K^3}}, \quad \E_{\mu_N} [b'_N] \le C \left( 1+\frac{N\sigma_N}{K^3} \right).\]
The conclusion follows similarly.
\end{proof}

\section{Measure-valued entropy solution}
\label{sec:mve}

We prove that under $\Q$, $\nu$ satisfies 
\eqref{eq:micro-entropy-inequality} with probability $1$.
We call such a Young measure 
a \emph{measure-valued entropy solution} to the initial-boundary value problem \eqref{hyperbolic_pde} and \eqref{eq:bl-bd}.
Thanks to Proposition \ref{Q_dirac_property}, $\nu$ is essentially
the entropy solution.

\subsection{Proof of \eqref{eq:micro-entropy-inequality}}
\label{subsec:micro-ent-ineq}

Fix an arbitrary Lax entropy--flux pair $(f,q)$ and $\varphi\in\mathcal C_c^2([0,T)\times(0,1))$ such that $\varphi\ge0$.
As in Section \ref{sec:compensated compactness}, we shall let $N\to\infty$ and examine the limit of each term in the decomposition \eqref{entropy decomposition}.
The main difference is that the proof of Proposition \ref{prop:compensated compactness} requires uniform estimate in the test function, which is no more necessary here.

First, by Lemma \ref{lem: error term}, \ref{lem:martingale} and \ref{lem:A_NS_N}, for any $\delta>0$,
\[\lim_{N\to\infty} \Q_N \Big\{ |\Acal_N(\varphi)| + |\mathcal S_N(\varphi)-\mathcal S_{N,1}(\varphi)| + |\Mcal_N(\varphi)| + |\Ecal_N(\varphi)| > \delta \Big\} = 0.\]
Meanwhile, the convexity of $f$ ensures that $\nabla\hat\eta_i\nabla f'(\hat\eta_i) \ge 0$, so $\mathcal S_{N,1}(\varphi)$ given by \eqref{eq:S_N1} is non-positive everywhere.
Therefore, \eqref{eq:micro-entropy-inequality} holds $\Q$-almost surely if we can show that
\begin{equation}\label{eq:convergence-g}
	\limsup_{N\to\infty} \Q_N \left\{ \left| \mathcal G_N(\varphi) + \iint_{\Sigma_T} \varphi(t,x)\int_\R f'(\lambda)G(x,\lambda)\,\nu_{t,x}^N(d\lambda)\,dxdt \right| > \delta \right\} = 0,
\end{equation}
for any $\delta>0$, where $G(x,\lambda)=V(x)(\lambda-\rho(x))$.
Recall the definition of $\mathcal G_N(\varphi)$ in Lemma \ref{lem:decomposition}.
As $\varphi$ is compactly supported, for sufficiently large $N$ we have
\begin{align*}
	\mathcal G_N(\varphi) =\,&- \iint_{\Sigma_T} \varphi(t,x)\int_\R f'(\lambda)G(x,\lambda)\,\nu_{t,x}^N(d\lambda)\,dxdt\\
	&- \int_0^T \sum_{i=K}^{N-K} f'(\hat\eta_i)\int_{\frac iN-\frac1{2N}}^{\frac iN+\frac1{2N}} \varphi \big[\hat G_i-G(\cdot,\hat\eta_i)\big]\,dxdt,
\end{align*}
where $\hat G_i$ is the smoothly weighted average of $G_i=V(\tfrac iN)(\eta_i-\rho(\tfrac iN))$.
Straightforward computation shows that
\[\left| \int_0^T \sum_{i=K}^{N-K} f'(\hat\eta_i)\int_{\frac iN-\frac1{2N}}^{\frac iN+\frac1{2N}} \varphi \big[\hat G_i-G(\cdot,\hat\eta_i)\big]\,dxdt \right| \le \frac{CK}N,\]
which vanishes uniformly as $N\to\infty$.
We can then conclude \eqref{eq:convergence-g}.

\subsection{Direct proof of \eqref{eq:micro-energy-bd}}
\label{subsec:micro-energy-bd}

%
%
%
Notice that \eqref{eq:micro-energy-bd} holds if for some constant $C_0$,
\begin{equation}\label{eq:energy-bd-1}
	\Q \left\{ \sup_{g\in\mathcal C_c^\infty(\R\times(0,1))} \left\{\iint_{\Sigma_T} (\bar u-\rho)gV\,dxdt - C_0\iint_{\Sigma_T} g^2V\,dxdt \right\} < \infty \right\} = 1,
\end{equation}
where $\bar u=\bar u(t,x):=\int_\R \lambda\nu_{t,x}(d\lambda)$ is a measurable function on $\Sigma_T$.

Let $\{g^j;j \geq 1\}$ be a countable subset of $\mathcal{C}^\infty_c (\R \times (0,1))$ which is dense in both the $L^1$ and the $L^2$ norm induced by $V$, \emph{i.e.}, for any $g\in\mathcal{C}^\infty_c (\R \times (0,1))$, there are $g^{j_n}$, $n\ge1$, such that
\[\lim_{n\to\infty} \iint_{\Sigma_T} \big(g-g^{j_n}\big)^2V\,dxdt + \lim_{n\to\infty} \iint_{\Sigma_T} \big|g-g^{j_n}\big|V\,dxdt = 0.\]
For $\ell\ge1$, consider the functional $\Phi_\ell:\mathcal Y\to\R$ defined as
\[\Phi_\ell(\nu) := \max_{1 \le j \le \ell} \left\{ \iint_{\Sigma_T} g^jV \left[ \int_\R (\lambda-\rho)\nu_{t,x} (d\lambda) \right] dxdt - C_0\iint_{\Sigma_T} (g^j)^2V\,dxdt \right\}.\]
Note that $\Phi_\ell$ is continuous for each fixed $\ell$.
Lemma \ref{lemma:energy bound_countable set} below together with the weak convergence of $\Q_N$ then  shows that there is a constant $C_1$ independent of $\ell$, such that
\[E^{\Q} \big[\Phi_\ell\big] = \lim_{N\to\infty} E^{\Q_N} \big[\Phi_\ell\big] \le C_1, \quad \forall\,\ell\ge1.\]
Taking $\ell\to\infty$ and applying the monotone convergence theorem,
\[E^{\Q} \left[ \sup_{j\ge1} \left\{\iint_{\Sigma_T} (\bar u-\rho)g^jV\,dxdt - C_0\iint_{\Sigma_T} (g^j)^2V\,dxdt \right\} \right] \le C_1.
\]
The condition \eqref{eq:energy-bd-1} then follows from the dense property of $\{g^j;j\ge1\}$.

\begin{lemma}\label{lemma:energy bound_countable set}
There exist constants $C_0$ and $C_1$ such that, for each $\ell \ge 1$,
\begin{equation}\label{energy estimate_1}
	\limsup_{N \rightarrow \infty} \E_{\mu_N} \left[ \max_{1 \leq j \leq \ell} \left\{ \iint_{\Sigma_T} \! (u^N-\rho)g^jV\,dxdt - C_0\!\iint_{\Sigma_T} (g^j)^2V\,dxdt \right\} \right] \leq C_1.
\end{equation}
\end{lemma}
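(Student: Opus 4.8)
The plan is to replace the two macroscopic integrals by microscopic Riemann sums, reduce the maximum over $j$ to a single exponential (Feynman--Kac) moment whose dependence on $\ell$ is washed out by the outer $\limsup_{N\to\infty}$, and then carry out a spectral/variational estimate in which the Glauber part cancels against a Dirichlet form and the penalty $C_0$ absorbs the leftover.

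First I would discretize. Writing $u^N(t,\cdot)$ through the weighted block averages $\hat\eta_{i,K}$ and transferring the averaging onto the smooth weight by summation by parts, one obtains for each fixed $j\le\ell$ the identities
\[
\iint_{\Sigma_T}(u^N-\rho)g^jV\,dxdt=\int_0^T\frac1N\sum_{i}V_i(\eta_i-\rho_i)g^j_i\,dt+r_N,\qquad \iint_{\Sigma_T}(g^j)^2V\,dxdt=\int_0^T\frac1N\sum_i V_i(g^j_i)^2\,dt+r_N',
\]
with \emph{deterministic} remainders $r_N,r_N'=O(K/N)$ that vanish uniformly in $j\le\ell$ because each $g^j\in\mathcal C^\infty_c(\R\times(0,1))$ and $V\in\mathcal C^1$ is bounded on its compact support. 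It therefore suffices to bound $\E_{\mu_N}[\max_{j\le\ell}\tilde F_j]$ for the microscopic functionals $\tilde F_j:=\int_0^T\frac1N\sum_i V_i\big[(\eta_i-\rho_i)g^j_i-C_0(g^j_i)^2\big]\,dt$.

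For the maximum I would fix $\gamma>0$ and use the pointwise bound $\max_{j\le\ell}\tilde F_j\le\frac1{\gamma N}\log\sum_{j\le\ell}e^{\gamma N\tilde F_j}$ together with Jensen's inequality, giving
\[
\E_{\mu_N}\big[\max_{j\le\ell}\tilde F_j\big]\le\frac{\log\ell}{\gamma N}+\frac1{\gamma N}\max_{j\le\ell}\log\E_{\mu_N}\big[e^{\gamma N\tilde F_j}\big].
\]
The term $\log\ell/(\gamma N)$ vanishes as $N\to\infty$, so everything reduces to the \emph{$\ell$-independent} exponential estimate $\frac1{\gamma N}\log\E_{\mu_N}[e^{\gamma N\tilde F_j}]\le C_1$. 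I would get this from the Feynman--Kac formula and the entropy inequality (cf.\ \cite{klscaling}) with reference measure $\nu:=\nu^N_{\rho(\cdot)}$. Since $\gamma N\tilde F_j=\int_0^T W_s\,ds$ with $W_s(\eta)=\gamma\sum_i V_i[(\eta_i-\rho_i)g^j_i-C_0(g^j_i)^2]$, one has $\log\E_{\mu_N}[e^{\gamma N\tilde F_j}]\le H(\mu_N\,|\,\nu)+\int_0^T\Lambda_s\,ds$, where
\[
\Lambda_s=\sup_{g:\,\langle g,g\rangle_\nu=1}\Big\{\gamma\sum_i V_ig^j_i\!\int(\eta_i-\rho_i)g^2\,d\nu-\gamma C_0\sum_i V_i(g^j_i)^2+\langle g,L_Ng\rangle_\nu\Big\}.
\]
As $\rho$ is bounded away from $0$ and $1$ by \eqref{uniform_bound}, $H(\mu_N\,|\,\nu)\le CN$, so the remaining task is the variational bound $\Lambda_s\le CN$.

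The crux is this estimate on $\Lambda_s$. I would use that $L_{\rm G}$ is reversible with respect to $\nu^N_{\rho(\cdot)}$ (the detailed balance behind \eqref{eq:c_iG}, \eqref{eq:dirichlet_g}), so that $\langle g,L_Ng\rangle_\nu=-N D^N_{\rm G}(g^2\nu;\nu)+N\langle g,(L_{\rm exc}+L_{\rm bd})g\rangle_\nu$ and the Glauber term is a genuine sink. A one-site computation then gives the sharp bound $\big|\int(\eta_i-\rho_i)g^2\,d\nu\big|\le\sqrt{2\,d_i(g)/V_i}$, where $d_i(g)=\tfrac12\int c_{i,{\rm G}}(g(\eta^i)-g(\eta))^2\,d\nu$ and $\sum_i d_i(g)=N D^N_{\rm G}(g^2\nu;\nu)$; feeding this into the linear Glauber term and applying Young's inequality produces a multiple of $\sum_i V_i(g^j_i)^2$, killed by the penalty once $C_0\ge\gamma/2$, plus a multiple of $\sum_i d_i(g)$, killed by $-N D^N_{\rm G}$. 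What survives is $N\langle g,(L_{\rm exc}+L_{\rm bd})g\rangle_\nu$, in which only the symmetric parts contribute: the boundary term is $O(N)$ trivially, the asymmetric exclusion contributes $\langle g,L_{\rm exc}^{\rm asym}g\rangle_\nu=O(1)$, and the strong viscosity — non-reversible against $\nu^N_{\rho(\cdot)}$ only through the $O(1/N)$ variation of $\rho$ — contributes $O(\sigma_N/N)$, i.e.\ $O(\sigma_N)=o(N)$ after the factor $N$. I expect \emph{this last step to be the main obstacle}: one must symmetrize the asymmetric and the $\sigma_N$-viscous exclusion against the non-invariant profile measure and verify that the non-reversibility corrections stay $O(N)$, which is exactly where $\sigma_N=o(N)$ from \eqref{sigma_condition} and the $\mathcal C^1$-regularity of $\rho$ are used. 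Granting $\Lambda_s\le CN$, we get $\frac1{\gamma N}\log\E_{\mu_N}[e^{\gamma N\tilde F_j}]\le C_1$ uniformly in $j$ and $N$, and \eqref{energy estimate_1} follows upon letting $N\to\infty$.
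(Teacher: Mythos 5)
Your proposal follows essentially the same route as the paper's proof: discretize the integrals, reduce the maximum to a single exponential moment via $\log E[e^{\max_j X_j}]\le\log\ell+\max_j\log E[e^{X_j}]$ (with $\log\ell/N\to0$), apply the entropy inequality and Feynman--Kac with reference measure $\nu^N_{\rho(\cdot)}$, absorb the linear Glauber term into the Glauber Dirichlet form and the $C_0$-penalty via a spin-flip change of variables plus Cauchy--Schwarz/Young, and finally bound $\langle\sqrt f,L_N\sqrt f\rangle_{\nu^N_{\rho(\cdot)}}$ by $CN$ using reversibility of $L_{\rm G}$ and the $O(N)$ non-reversibility corrections for $L_{\rm exc}$ and $L_{\rm bd}$ coming from $\sigma_N=o(N)$ and $\rho\in\mathcal C^1$ (the paper's \eqref{df_4}--\eqref{df_5}). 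The only cosmetic difference is that you sum by parts to attach the pointwise weight $V_ig^j_i$ to $\eta_i$ rather than keeping the cell average $\overline{(g^jV)}_i$ on the block-averaged field, which lets you sidestep the paper's Lemma \ref{lem:V} comparing $V$ at neighboring sites within a block.
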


\begin{proof}
Only in this proof, to shorten the formulas we denote
\[\|g\|_V^2 := \iint_{\Sigma_T} g^2(t,x)V(x)\,dxdt, \quad \overline{(gV)}_i(t) := N\int_{\frac iN-\frac1{2N}}^{\frac iN+\frac1{2N}} g(t,x)V(x)\,dx.\]
Since $g^j$ is compactly supported, for $N$ sufficiently large,
\[\iint_{\Sigma_T} \big(u^N-\rho\big)g^jV\,dxdt = \int_0^T \frac1N\sum_{i=K}^{N-K} \hat\eta_i\overline{(g^jV)}_i\,dt - \iint_{\Sigma_T} \rho g^jV\,dxdt.\]
To conclude the proof, it suffices to prove that
\begin{align}
	&\limsup_{N\to\infty} \left| \int_0^T \frac1N\sum_{i=K}^{N-K} \hat\rho_i\overline{(g^jV)}_i\,dt - \iint_{\Sigma_T} \rho g^jV\,dxdt \right| = 0, \quad \forall\,1 \le j \le \ell,\label{energy_pf_1}\\
	&\limsup_{N\to\infty} \E_{\mu_N} \left[ \max_{j \leq \ell} \left\{ \int_0^T \frac1N\sum_{i=K}^{N-K} (\hat\eta_i-\hat\rho_i)\overline{(g^jV)}_i\,dt - C_0\|g^j\|_V^2 \right\} \right] \le C_1.\label{energy_pf_2}
\end{align}

We begin with \eqref{energy_pf_1}, which is completely deterministic.
For some fixed $j\le\ell$, using the fact that $g^j$ is compactly supported, we only need to show that
\[\limsup_{N\to\infty} \left| \int_0^T dt\sum_{i=K}^{N-K} \int_{\frac iN-\frac1{2N}}^{\frac iN+\frac1{2N}} \big[\hat\rho_i-\rho(x)\big]g^j(t,x)V(x)\,dx \right| = 0.\]
This follows as $\rho$ is continuous and $\hat\rho_i$ is the smoothly weighted average of $\rho(\tfrac iN)$.

Now we prove \eqref{energy_pf_2}. First note that it suffices to prove it with $K$ replaced by any other mesoscopic scale $n=n(N)$ such that $K \le n = o(N)$, since
\[\limsup_{N\to\infty} \max_{j\le\ell} \left\{ \int_0^T \frac1N \Big(\sum_{i=K}^{n-1} + \sum_{i=N-n+1}^{N-K}\Big) \big|\overline{(g^jV)}_i\big| \right\} = 0.\]
The choice of $n$ is specified below in Lemma \ref{lem:V}.
Recall that $\nu_{\rho(\cdot)}^N$ is the product measure on $\Omega_N$ associated with the profile $\rho$.
From the entropy inequality, the expectation in \eqref{energy_pf_2} is bounded by
\[\frac{H(\mu_N \,|\, \nu_{\rho(\cdot)}^N)}{N} + \frac{1}{N} \log \E_{\nu_{\rho(\cdot)}^N} \left[\exp \left\{ \max_{j \leq \ell} \Big\{ \int_0^T \sum_{i=n}^{N-n} (\hat\eta_i-\hat\rho_i)\overline{(g^jV)}_i\,dt - C_0N\|g^j\|_V^2 \Big\} \right\} \right].\]
Due to \eqref{uniform_bound}, the relative entropy is bounded by $CN$, so the first term is uniformly bounded.
Also notice that for any random variables $X_1$, ... $X_\ell$,
\[\log E \big[e^{\max\{X_j;1 \leq j \leq \ell\}}\big] \leq \log E \Big[\sum_{1 \le j \le \ell} e^{X_j}\Big] \leq \log \ell + \max_{1 \leq j \leq \ell} \log E\big[e^{X_j}\big],\]
so we only need to find universal constants $C_0$ and $C_1$, such that
\begin{equation}\label{energy_pf_3}
	\log \E_{\nu_{\rho(\cdot)}^N} \left[\exp \left\{ \int_0^T \sum_{i=n}^{N-n} (\hat\eta_i-\hat\rho_i)\overline{(gV)}_i\,dt \right\} \right] \le \big(C_0\|g\|_V^2+C_1\big)N,
\end{equation}
for all $g\in\mathcal C_c^\infty(\R\times(0,1))$.
By the Feynman--Kac formula (see, e.g., \cite[Lemma 7.3]{baldasso2017exclusion}), the left-hand side of \eqref{energy_pf_3} is bounded from above by
\begin{equation}\label{energy_pf_4}
	\int_0^T \sup_{f} \left\{  \sum_{\eta \in \Omega_N} \sum_{i=n}^{N-n} \big(\hat{\eta}_i - \hat{\rho}_i\big) \overline{(gV)}_i f(\eta) \nu^N_{\rho(\cdot)} (\eta) + \big\langle \sqrt{f}, L_N\sqrt{f} \big\rangle_{\nu_{\rho(\cdot)}^N} \right\} dt,
\end{equation}
where the supremum is taken over all $\nu_{\rho(\cdot)}^N$-densities.
For each $j$, performing the change of variables $\eta\mapsto\eta^j$,
\begin{align*}
	&\sum_{\eta\in\Omega_N} (\eta_j - \rho_j)  f(\eta) \nu^N_{\rho(\cdot)} (\eta)\\
	 =\,&\frac12\sum_{\eta\in\Omega_N} (\eta_j - \rho_j)  f(\eta) \nu^N_{\rho(\cdot)} (\eta) + \frac12\sum_{\eta\in\Omega_N} (1- \eta_j - \rho_j)  f(\eta^j)  \nu^N_{\rho(\cdot)} (\eta^j) \\
	=\,&\frac{1}{2}\sum_{\eta\in\Omega_N} (\eta_j - \rho_j) \big[f(\eta)-f(\eta^j)\big] \nu^N_{\rho(\cdot)} (\eta),
\end{align*}
where the last line follows from the equality $(1- \eta_j - \rho_j) \nu^N_{\rho(\cdot)} (\eta^j) = - (\eta_j - \rho_j) \nu^N_{\rho(\cdot)} (\eta)$.
 Thus, the first term inside the supremum in \eqref{energy_pf_4} reads
 \begin{align*}
 	&\sum_{i=n}^{N-n} \overline{(gV)}_i \sum_{|j|<K} w_j \sum_{\eta\in\Omega_N} (\eta_{i+j}-\rho_{i+j}) f(\eta) \nu_{\rho(\cdot)}^N(\eta)\\
	=\,&\frac12\sum_{i=n}^{N-n} \overline{(gV)}_i \sum_{|j|<K} w_j \sum_{\eta\in\Omega_N} (\eta_{i+j}-\rho_{i+j}) \big[f(\eta)-f(\eta^{i+j})\big] \nu^N_{\rho(\cdot)}(\eta).
 \end{align*}
Using \CS inequality, we can bound it from above by $\mathcal I_1+\mathcal I_2$, where
\begin{align*}
	&\mathcal I_1 := \frac12 \sum_{i=n}^{N-n} \sum_{|j|<K} \sum_{\eta \in \Omega_N} w_j c_{i+j,{\rm G}}(\eta) \Big(\sqrt{f(\eta)} - \sqrt{f(\eta^{i+j})}\Big)^2 \nu^N_{\rho(\cdot)}(\eta),\\
	&\mathcal I_2 := \frac12 \sum_{i=n}^{N-n} \sum_{|j|<K} \sum_{\eta \in \Omega_N} w_j \frac{(\eta_{i+j} - \rho_{i+j})^2}{c_{i+j,{\rm G}}(\eta)} \overline{(gV)}_i^2 \Big(\sqrt{f(\eta)} + \sqrt{f(\eta^{i+j})}\Big)^2 \nu^N_{\rho(\cdot)}(\eta),
\end{align*}
where $c_{i,{\rm G}}(\eta)=V_i[\rho_i(1-\eta_i)+\eta_i(1-\rho_i)]>0$ due to \eqref{uniform_bound}.
Recall the Dirichlet form $D^N_{\rm G}$ defined in \eqref{eq:dirichlet_g}.
Since $\sum_{|j|<K} w_j=1$,
\[\mathcal I_1 \le \frac12\sum_{i=1}^{N-1} \sum_{|j|<K} w_j \sum_{\eta \in \Omega_N} c_{i,{\rm G}}(\eta) \Big(\sqrt{f(\eta)} - \sqrt{f(\eta^i)}\Big)^2 \nu^N_{\rho(\cdot)}(\eta) = N D^N_{\rm G} (\mu;\nu_{\rho(\cdot)}^N),\]
where $\mu:=f\nu_{\rho(\cdot)}^N$.
To estimate $\mathcal I_2$, notice that
\[\frac{(\eta_{i+j} - \rho_{i+j})^2}{c_{i+j,{\rm G}}(\eta)}\overline{(gV)}_i^2 \le \frac{C}{V_{i+j}}\int_{\frac iN-\frac1{2N}}^{\frac iN+\frac1{2N}} V\,dx \cdot \bigg[\int_{\frac iN-\frac1{2N}}^{\frac iN+\frac1{2N}} V\,dx\bigg]^{-1}\overline{(gV)}_i^2.\]
From \eqref{uniform_bound}, Lemma \ref{lem:V} below and \CS inequality,
\[\frac{(\eta_{i+j} - \rho_{i+j})^2}{c_{i+j,{\rm G}}(\eta)}\overline{(gV)}_i^2 \le CN\int_{\frac{i}{N}-\frac{1}{2N}}^{\frac{i}{N} + \frac{1}{2N}} V(x) g(t,x)^2 dx,\]
with some constant $C$ independent of $(i,j,g)$.
Therefore, $\mathcal I_2 \le C_0N\|g\|_V^2$.
Putting the estimates for $\mathcal I_1$, $\mathcal I_2$ into \eqref{energy_pf_4}, we see that \eqref{energy_pf_3} holds if we can show
\begin{equation}\label{es_1}
	ND^N_{\rm G} (\mu;\nu_{\rho(\cdot)}^N) + \langle \sqrt{f}, L_N \sqrt{f}\rangle_{\nu_{\rho(\cdot)}^N} \leq CN,
\end{equation}
for any $N$, any $\nu_{\rho(\cdot)}^N$-density $f$ and $\mu=f\nu_{\rho(\cdot)}^N$

The proof of \eqref{es_1} is standard.
Since $\nu_{\rho(\cdot)}^N$ is reversible for $L_{\rm G}$,
	\[D^N_{\rm G} (\mu;\nu_{\rho(\cdot)}^N) + \<\sqrt{f}, L_{\rm G} \sqrt{f}\>_{\nu_{\rho(\cdot)}^N} = 0.\]
In Appendix \ref{sec:df_estimates}, we prove that (cf.~\eqref{df_1} and \eqref{df_3})
\begin{align}
	\label{df_4}
	&\big\langle \sqrt f,L_{\rm exc}\sqrt f \big\rangle_{\nu_{\rho(\cdot)}^N} \le -\tfrac14(\sigma_N-1)D_{\rm exc}^N(\mu;\nu_{\rho(\cdot)}^N) + C,\\
	\label{df_5}
	&\big\langle \sqrt f,L_{\rm bd}\sqrt f \big\rangle_{\nu_{\rho(\cdot)}^N} \le - D_-^N(\mu;\nu_{\rho(\cdot)}^N) - D_+^N(\mu;\nu_{\rho(\cdot)}^N) + C.
\end{align}
Since the Dirichlet forms are non-negative, \eqref{es_1} follows.
\end{proof}

\begin{lemma}\label{lem:V}
Suppose that $V\in\mathcal C^1((0,1))$ and $\inf_{(0,1)} V>0$.
Define $n=n(N)$ as
\[n := \inf \left\{ n \ge K;\,\sup \Big\{|V'(x)|; x\in \left[ \tfrac{n-K}N,1-\tfrac{n-K}N \right]\Big\} \le \frac NK \right\}.\]
Then, $n=o(N)$ as $N\to\infty$, and there is a constant $C=C(V)$ such that
\[\max_{n \le i \le N-n,|j|<K} \left\{ V \left( \tfrac{i+j}{N} \right)^{-1} \int_{\frac iN-\frac1{2N}}^{\frac iN+\frac1{2N}} V\,dx \right\} \le \frac CN.\]
\end{lemma}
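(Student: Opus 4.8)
The statement splits into the claim $n=o(N)$ and the uniform ratio bound; I would treat them in that order. For $n=o(N)$, the plan is to exploit that $N/K\to\infty$ (a consequence of $K\ll\sigma_N$ and $\sigma_N=o(N)$ in \eqref{eq:mesoscopic}) together with the fact that $V'$, being continuous on $(0,1)$, is bounded on every compact subinterval. Concretely, I would fix $\delta\in(0,\tfrac12)$, set $M_\delta:=\sup_{[\delta,1-\delta]}|V'|<\infty$, and note that once $N$ is large enough that $N/K\ge M_\delta$, the integer $m=\lceil\delta N\rceil+K$ satisfies $[\tfrac{m-K}N,1-\tfrac{m-K}N]\subseteq[\delta,1-\delta]$, so the set defining the infimum is nonempty and $n\le m$. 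This gives $\limsup_N n/N\le\delta$, and since $\delta$ is arbitrary, $n=o(N)$.

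For the ratio bound, I would first verify the geometric nesting: for $n\le i\le N-n$ and $|j|<K$, both $\tfrac{i+j}N$ and the entire window $[\tfrac iN-\tfrac1{2N},\tfrac iN+\tfrac1{2N}]$ lie inside the interval $I_n:=[\tfrac{n-K}N,1-\tfrac{n-K}N]$ on which $|V'|\le N/K$ by the definition of $n$; this reduces to the elementary inequalities $i-\tfrac12\ge n-K$ and $i+j>n-K$, which hold since $K\ge1$ (and symmetrically at the right endpoint). The mean value theorem for integrals then writes $\int_{\frac iN-\frac1{2N}}^{\frac iN+\frac1{2N}}V\,dx=\tfrac1N V(\xi)$ for some $\xi$ in the window, and since $|\xi-\tfrac{i+j}N|\le\tfrac KN+\tfrac1{2N}\le\tfrac{2K}N$ with the segment joining the two points contained in $I_n$, the gradient bound yields $|V(\xi)-V(\tfrac{i+j}N)|\le\tfrac NK\cdot\tfrac{2K}N=2$. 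Finally $\inf_{(0,1)}V>0$ turns this additive oscillation into a multiplicative one, $V(\xi)/V(\tfrac{i+j}N)\le 1+2/\inf V=:C$, whence $V(\tfrac{i+j}N)^{-1}\int V\le C/N$ uniformly in the admissible pairs $(i,j)$.

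The conceptual heart, and the reason the threshold $n$ is chosen exactly this way, is the calibration in the second paragraph: the averaging window has width of order $K/N$, and the constraint $|V'|\le N/K$ built into the definition of $n$ is tuned precisely so that the oscillation of $V$ across such a window is $O(1)$ rather than growing, which would fail on any smaller scale near the boundary where $V$ blows up. Beyond this, the only genuine care needed is bookkeeping the interval nesting so that the bound $|V'|\le N/K$ is available on the whole segment between $\xi$ and $\tfrac{i+j}N$; once that is secured the remaining estimates are elementary and require no further structure of $V$ beyond $\inf_{(0,1)}V>0$.
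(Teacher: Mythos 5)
Your proof is correct and follows essentially the same route as the paper: boundedness of $V'$ on compact subintervals plus $K\ll N$ gives $n=o(N)$, and the calibration $|V'|\le N/K$ over windows of spatial width $O(K/N)$ yields an $O(1)$ additive oscillation that $\inf_{(0,1)}V>0$ converts into the multiplicative bound $C/N$. The only (immaterial) difference is that you compare $V(\xi)$ from the integral mean value theorem directly to $V(\tfrac{i+j}{N})$, whereas the paper routes the comparison through the minimizing offset $j_{N,i}$; both give the same constant up to a factor.
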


\begin{proof}
We first prove that for any $\varepsilon>0$, $n\le\varepsilon N$ for sufficiently large $N$.
Indeed, let $N_\varepsilon$ be such that $KN^{-1}<2^{-1}\varepsilon$ for all $N>N_\varepsilon$.
Then, if $N>N_\varepsilon$,
\[\sup \Big\{|V'(x)|;x\in \left[ \tfrac{\varepsilon N-K}N,1-\tfrac{\varepsilon-K}N \right] \Big\} \le \sup \Big\{|V'(x)|;x\in \left[ \tfrac\varepsilon2,1-\tfrac\varepsilon2 \right] \Big\} = C_\varepsilon.\]
We can further choose $N_\varepsilon$ such that $KN^{-1}<C_\varepsilon^{-1}$ for all $N>N_\varepsilon$, then $n\le\varepsilon N$.

For the second criterion, suppose that $|V(\tfrac{i+j}N)|$ takes the minimum value for $|j|<K$ at some $j_{N,i}$.
Then, for each $i=n$, ..., $N-n$,
\begin{align*}
	\left| N\int_{\frac iN-\frac1{2N}}^{\frac iN+\frac1{2N}} V\,dx - V \left( \tfrac{i+j_{N,i}}N \right) \right| &\le \sup \Big\{|V'|\mathbf1_{[\frac{i-K}N,\frac{i+K}N]} \Big\}\frac{|j_{N,i}|+1}N\\
	&\le \sup \Big\{|V'|\mathbf1_{[\frac{n-K}N,1-\frac{n-K}N]} \Big\}\frac KN \le 1.
\end{align*}
Therefore, for each $i$ and $j$,
\[\big[V \left( \tfrac{i+j}{N} \right) \big]^{-1}\int_{\frac iN-\frac1{2N}}^{\frac iN+\frac1{2N}} V\,dx \le \left[ V \left( \tfrac{i+j_{N,i}}{N} \right) \right]^{-1} \left[ 1+V \left( \tfrac{i+j_{N,i}}{N} \right) \right] \le 1+\frac1{\inf_{(0,1)} V}.\]
The second criterion then follows from \eqref{uniform_bound}.
\end{proof}

\subsection{Direct proof of Proposition \ref{prop:boundary_continuity}}
\label{subsec:micro-bd-cont}

	By the continuity of $\rho(\cdot)$,
	\[\lim_{\varepsilon \rightarrow 0+} \lim_{N \rightarrow \infty} \frac{1}{\varepsilon N} \sum_{i=1}^{\varepsilon N} \rho_i =  \rho (0).\]
	Thus, we only need to show for fixed $\varepsilon>0$ that 
	\begin{equation}
	\lim_{N \rightarrow \infty} \int_0^t \frac{1}{\varepsilon N} \sum_{i=1}^{\varepsilon N} \big(\eta_{i} (s) - \rho_i\big) ds = 0 \quad \text{in $\P_{\mu_N}$-probability}.
	\end{equation}
Take 
\[A :=  A(\varepsilon) := \sqrt{\varepsilon / \inf \big\{ V(x);x\in(0,\varepsilon) \big\} }.\]
Let $\mu^{(\varepsilon N)} (s)$ denote the distribution of $\{\eta_1 (s), \eta_2 (s), \ldots, \eta_{\varepsilon N} (s)\}$.  By entropy inequality, 
\begin{align*}
	&\E_{\mu_N} \Big[ \Big| \int_0^t \frac{1}{\varepsilon N} \sum_{i=1}^{\varepsilon N} \big(\eta_{i} (s) - \rho_i\big) ds \Big|\Big] \leq \int_0^t  E^{\mu^{(\varepsilon N)} (s)} \Big[ \Big| \frac{1}{\varepsilon N} \sum_{i=1}^{\varepsilon N} \big(\eta_{i}  - \rho_i\big) \Big|\Big] ds \\
	&\leq \int_0^t \frac{H(\mu^{(\varepsilon N)} (s) | \nu^N_{\rho(\cdot)})}{AN} ds + \frac{t}{AN} \log E^{\nu^N_{\rho(\cdot)}}  \Big[ \exp \Big\{  \Big| \frac{A}{\varepsilon} \sum_{i=1}^{\varepsilon N} \big(\eta_{i}  - \rho_i\big) \Big| \Big\}\Big] =: {\rm I} + {\rm II}.
\end{align*}

We first bound ${\rm II}$, which is simpler. Note that we could first remove the absolute value inside the exponential. Since $\nu^N_{\rho(\cdot)}$ is a product measure, and by Taylor's expansion, there exists some constant $C$ such that
\begin{align*}
{\rm II} \leq \frac{t}{AN} \sum_{i=1}^{\varepsilon N} \log E^{\nu^N_{\rho(\cdot)}}  \Big[ \exp \Big\{ \frac{A}{\varepsilon}  \big(\eta_{i}  - \rho_i\big) \Big\}\Big] \leq \frac{CtA}{\varepsilon},
\end{align*}
which converges to zero as $\varepsilon \rightarrow 0$ by \eqref{v_condition_3}.

For ${\rm I}$, we consider the following Markov chain $X(t) := \{X_i(t)\}_{1\leq i \leq \varepsilon N}$, where $\{X_i (t)\}$, $1 \leq i \leq \varepsilon N$,  are independent $\{0,1\}$-valued Markov chains, and  the transition rates for $X_i (t)$ are given by 
\[1 \rightarrow 0 \quad \text{rate} \quad  V_i (1-\rho_i), \quad 0 \rightarrow 1 \quad \text{rate} \quad V_i \rho_i.\]
Since $\rho(\cdot)$ is bounded away from zero and one, the logarithmic Sobolev constant for the chain $X_i (t)$ is of order $V_i$. By \cite[Lemma 3.2]{diaconis1996logarithmic},  the logarithmic Sobolev constant for the chain $X(t)$ has order
\[\min_{1 \leq i \leq \varepsilon N} V_i \geq \inf \{V(x) ; x \in (0,\varepsilon)\}.\]
Therefore,
\[H(\mu^{(\varepsilon N)} (s) | \nu^N_{\rho(\cdot)}) \leq \frac{CN}{\inf \{V(x) ; x \in (0,\varepsilon)\}} D^N_{\rm G} (s).\]
By Lemma \ref{lem:dirichlet_form_estimate}, $\int_0^t D^N_{\rm G} (s) ds \leq C$. Thus,
\[{\rm I} \leq \frac{C}{A \inf \{V(x) ; x \in (0,\varepsilon)\}},\]
which also converges to zero as $\varepsilon \rightarrow 0$.

\appendix

\section{Computations concerning the Dirichlet forms}\label{sec:df_estimates}

Here we collect some fundamental estimates of the Dirichlet forms in \eqref{eq:dirichlet_exc}--\eqref{eq:dirichlet_right}.

\subsection{Proof of \eqref{df_1}}\label{sec:a1}
From the definition of $L_{\rm exc}$,
\begin{equation}\label{eq:ssep-asep}
	L_{\rm exc} = \frac{\sigma_N+1}2 S_{\rm exc} + \frac{2p-1}2 A_{\rm exc},
\end{equation}
where the operators $S_{\rm exc}$ and $A_{\rm exc}$ are respectively given by
\[S_{\rm exc}g = \sum_{i=0}^{N-1} \big[g(\eta^{i,i+1}) - g(\eta)\big], \quad A_{\rm exc}g = \sum_{i=0}^{N-1} (\eta_i-\eta_{i+1}) \big[g(\eta^{i,i+1}) - g(\eta)\big].\]
Recall that $\nu=\nu_{\frac12}^N$ and $f$ is a $\nu$-density.
By the basic inequality $x\log(y/x) \le 2\sqrt x(\sqrt y-\sqrt x)$ for any $x$, $y\ge0$,
\[\big\langle f,L_{\rm exc}[\log f] \big\rangle_\nu \leq 2 \big\langle \sqrt f,L_{\rm exc} [\sqrt f] \big\rangle_\nu.\]
We compute $S_{\rm exc}$ and $A_{\rm exc}$ respectively.
Since $\nu$ is reversible for $S_{\rm exc}$,
\[\big\langle \sqrt f,S_{\rm exc} \sqrt f \big\rangle_\nu = -D_{\rm exc}^N(\mu;\nu),\]
where $\mu=f\nu$.
For $A_{\rm exc}$, we only need to observe that
\[\big\langle \sqrt f,A_{\rm exc} \sqrt f \big\rangle_\nu = \sum_{\eta\in\Omega_N} (\eta_N-\eta_0)f(\eta)\nu(\eta) = E^\mu[\eta_N-\eta_0].\]
The estimate is then concluded.

\subsection{Proof of \eqref{df_2}}
Let $f_*$ be the density of $\mu$ with respect to $\nu_{\rho(\cdot)}^N$, then
\begin{align*}
	\log f(\eta^i) - \log f(\eta) &= \log f_*(\eta^i) - \log f_*(\eta) + \log \big[\nu_{\rho(\cdot)}^N(\eta^i)\big] - \log \big[\nu_{\rho(\cdot)}^N(\eta)\big]\\
	&= \log f_*(\eta^i) - \log f_*(\eta) + (1-2\eta_i) \log \left( \tfrac{\rho_i}{1-\rho_i} \right).
\end{align*}
Therefore, $\langle f,L_{\rm G}[\log f] \rangle_\nu$ equals to
\[\big\langle f_*,L_{\rm G}[\log f_*] \big\rangle_{\nu_{\rho(\cdot)}^N} + \sum_{\eta\in\Omega_N} \frac1N\sum_{i=1}^{N-1} f(\eta) c_{i,{\rm G}}(\eta) (1-2\eta_i) \log \left( \tfrac{\rho_i}{1-\rho_i} \right) \nu(\eta).\]
Since $\nu_{\rho(\cdot)}^N$ is reversible with respect to $L_{\rm G}$, similarly as in Appendix \ref{sec:a1},
\[\big\langle f_*,L_{\rm G}[\log f_*] \big\rangle_{\nu_{\rho(\cdot)}^N} \le 2 \big\langle \sqrt{f_*},L_{\rm G}\sqrt{f_*}\big\rangle_{\nu_{\rho(\cdot)}^N} = -2D_{\rm G}^N(\mu;\nu_{\rho(\cdot)}^N).\]
The estimate \eqref{df_2} then holds since $c_{i,{\rm G}}(\eta)(1-2\eta_i) = V_i(\rho_i-\eta_i)$.

\subsection{Proof of \eqref{df_3}}
As in Appendix \ref{sec:a1}, we begin with
\[\big\langle f,L_{\rm bd}[\log f] \big\rangle_\nu \le 2 \big\langle \sqrt f,L_{\rm bd}\sqrt f \big\rangle_\nu.\]
We compute the terms associated with $\eta_0$ as an example.
Those associated with $\eta_N$ follow the same argument.
Notice that
\begin{align*}
	&2\sum_{\eta\in\Omega_N} c_0(\eta)\sqrt{f(\eta)} \left[ \sqrt{f(\eta^0)} - \sqrt{f(\eta)} \right] \nu(\eta)\\
	=\,&-\sum_{\eta\in\Omega_N} c_0(\eta) \left[ \sqrt{f(\eta^0)}-\sqrt{f(\eta)} \right]^2 \nu(\eta) + \sum_{\eta\in\Omega_N} c_0(\eta) \big[f(\eta^0)-f(\eta)\big] \nu(\eta)\\
	=\,&-2D_-^N(\mu;\nu) + \sum_{\eta\in\Omega_N} f(\eta) \left[ c_0(\eta^0)\frac{\nu(\eta^0)}{\nu(\eta)}-c_0(\eta) \right] \nu(\eta).
\end{align*}
The conclusion then follows since $\nu(\eta^0)=\nu(\eta)$ and $c_0(\eta^0)-c_0(\eta) = (c_{\rm out}^--c_{\rm in}^-)(1-2\eta_0)$.

\subsection{Proof of \eqref{df_4}}
Let $f$ be a $\nu_{\rho(\cdot)}^N$-density function and denote $\mu=f\nu_{\rho(\cdot)}^N$.
Observe that the main difference from \eqref{df_1} is that the reference measure $\nu_{\rho(\cdot)}^N$ is not reversible.
Recall the decomposition \eqref{eq:ssep-asep} and we estimate $S_{\rm exc}$ and $A_{\rm exc}$ respectively.

We begin with the symmetric part $S_{\rm exc}$.
Let $g:=\sqrt f$.
By dividing $g(\eta)$ into $2^{-1}(g(\eta)-g(\eta^{i,i+1}))$ and $2^{-1}(g(\eta)+g(\eta^{i,i+1}))$, we obtain
\[\langle g,S_{\rm exc}g \rangle_{\nu_{\rho(\cdot)}^N} = -D_{\rm exc}^N(\mu;\nu_{\rho(\cdot)}^N) + \sum_\eta \sum_{i=0}^{N-1} \frac{g(\eta)+g(\eta^{i,i+1})}2 \big[g(\eta^{i,i+1})-g(\eta)\big] \nu_{\rho(\cdot)}^N(\eta).\]
Applying the change of variable $\eta^{i,i+1}\to\eta$, the second term becomes
\begin{align*}
	&\frac12\sum_\eta \sum_{i=0}^{N-1} \frac{g(\eta)[g(\eta^{i,i+1})-g(\eta)]}2 \left[ \nu_{\rho(\cdot)}^N(\eta)-\nu_{\rho(\cdot)}^N(\eta^{i,i+1}) \right]\\
	\le\,&\frac12D_{\rm exc}^N(\mu;\nu_{\rho(\cdot)}^N) + \frac14\sum_{\eta\in\Omega_N} \sum_{i=0}^{N-1} g^2(\eta) \bigg[1-\frac{\nu_{\rho(\cdot)}^N(\eta^{i,i+1})}{\nu_{\rho(\cdot)}^N(\eta)}\bigg]^2 \nu_{\rho(\cdot)}^N(\eta).
\end{align*}
Since $\rho \in\mathcal C^1([0,1];(0,1))$, with a constant $C$ independent of $(i,N)$ we have
\begin{equation}\label{exchange}
	\left| 1-\frac{\nu_{\rho(\cdot)}^N(\eta^{i,i+1})}{\nu_{\rho(\cdot)}^N(\eta)} \right| = \frac{|(\rho_{i+1}-\rho_i)(\eta_{i+1}-\eta_i)|}{\rho_i^{\eta_i}(1-\rho_i)^{1-\eta_i}\rho_{i+1}^{\eta_{i+1}}(1-\rho_{i+1})^{1-\eta_{i+1}}} \le \frac CN.
\end{equation}
Since $g^2\nu_{\rho(\cdot)}^N=\mu$ is a probability measure, we have
\[\langle g,S_{\rm exc}g \rangle_{\nu_{\rho(\cdot)}^N} \le -\frac12D_{\rm exc}^N(\mu;\nu_{\rho(\cdot)}^N) + \frac{C^2}{4N}.\]
Now we treat the antisymmetric part $A_{\rm exc}$.
First observe that
\begin{align*}
	\langle g,A_{\rm exc}g \rangle_{\nu_{\rho(\cdot)}^N} =\,&-\frac12\sum_\eta \sum_{i=0}^{N-1} (\eta_i-\eta_{i+1}) \big[g(\eta^{i,i+1})-g(\eta)\big]^2\nu_{\rho(\cdot)}^N(\eta)\\
 	&+ \frac12\sum_\eta \sum_{i=0}^{N-1} (\eta_i-\eta_{i+1})\big[g^2(\eta^{i,i+1})-g^2(\eta)\big]\nu_{\rho(\cdot)}^N(\eta).
 \end{align*}
As $|\eta_{i+1}-\eta_i|\le1$, the first term is bounded by $D^N_{\rm exc}(\mu;\nu_{\rho(\cdot)}^N)$.
The second term reads
\[\frac12\sum_\eta \sum_{i=0}^{N-1} (\eta_{i+1}-\eta_i)f(\eta) \nu_{\rho(\cdot)}^N(\eta^{i,i+1}) + \frac12\sum_\eta \sum_{i=0}^{N-1} (\eta_{i+1}-\eta_i) f(\eta)\nu_{\rho(\cdot)}^N(\eta).\]
Since $f(\eta)\nu_{\rho(\cdot)}^N(\eta)=\mu(\eta)$, its modulus is bounded by
\begin{align*}
	\frac12\sum_\eta \sum_{i=0}^{N-1} |\eta_{i+1}-\eta_i|\left| \frac{\nu_{\rho(\cdot)}^N(\eta^{i,i+1})}{\nu_{\rho(\cdot)}^N(\eta)} - 1 \right| \mu(\eta) + E^\mu [\eta_N-\eta_0],
\end{align*}
which is uniformly bounded due to \eqref{exchange}.
Therefore,
\[\langle g,A_{\rm exc}g \rangle_{\nu_{\rho(\cdot)}^N} \le D_{\rm exc}^N(\mu;\nu_{\rho(\cdot)}^N) + C.\]
Putting the two estimates into \eqref{eq:ssep-asep}, we can conclude since $\sigma_N \ll N$.

\subsection{Proof of \eqref{df_5}}
Repeating the argument in the proof of \eqref{df_4} with $\nu$ replaced by $\nu_{\rho(\cdot)}^N$, we only need to bound
\[\frac12\sum_{\eta\in\Omega_N} f(\eta) \left[ c_0(\eta^0)\frac{\nu_{\rho(\cdot)}^N(\eta^0)}{\nu_{\rho(\cdot)}^N(\eta)}-c_0(\eta) \right] \nu_{\rho(\cdot)}^N(\eta).\]
It is uniformly bounded since
\[ \left| c_0(\eta^0)\frac{\nu_{\rho(\cdot)}^N(\eta^0)}{\nu_{\rho(\cdot)}^N(\eta)}-c_0(\eta) \right| = \left| c_0(\eta^0)\Big(\frac{\rho_0}{1-\rho_0}\Big)^{1-2\eta_0} - c_0(\eta) \right| \le C.\]

\section*{Acknowledgement}
The authors specially thank an anonymous referee whose earnest work helped to improve the paper.
Linjie Zhao is supported by National Natural Science Foundation of China (Grant Number 12371142) and the financial support from the Fundamental Research Funds for the Central Universities in China.

\section*{Statements}

\subsection*{Conflict of interest}
All authors have no conflict of interest.

\subsection*{Data availability}
The authors declare that all data supporting this article are available within the article.


\end{document}